\setlist[enumerate,1]{label=(\alph*), ref=(\alph*), leftmargin=*, align=left, labelwidth=!}
 \DeclareMathOperator*{\argmin}{arg\,min}
 \newcommand\innprod[2]{\left\langle{}#1{},{}#2{}\right\rangle}%
 \newcommand\func[3]{#1:#2\rightarrow#3}
 \newcommand\R{\mathbb R}%
 \newcommand\N{\mathbb N}%
 \newcommand\C{\mathcal C}%
 \newcommand\K{\mathcal K}%
  \newcommand\W{\mathcal W}%
  \newcommand\X{\mathcal X}%
  \newcommand\Nf{\mathcal{N}^f_i}
  \newcommand\Nx{\mathcal{N}^x_i}
  \newcommand\Nnf{\mathcal{N}_i}
  \newcommand\MN{\mathcal{N}}
 \newcommand\eL{\mathcal{L}}%
 \newcommand\Lo{L_\Omega}%
 \newcommand\ov[1]{\overline #1}%
 \newcommand\interior{\mathrm{int}}%
 \newcommand\dist{\mathrm{dist}}%
 \newcommand\proj{\mathrm{proj}}%
 \newcommand\wh[1]{\widehat{#1}} %
 \newcommand\set[1]{\left\{{}#1{}\right\}} 
 \newcommandx\seq[3][{2=k\geq 0},{3={}}]{\{#1\}_{#2}^{#3}}
 \newcommandx\seqone[3][{2=k\geq 1},{3={}}]{\{#1\}_{#2}^{#3}}
 \newcommandx\subseq[3][{2=j\geq 0},{3={}}]{\{#1\}_{#2}^{#3}}
 \newtheorem{thm}{Theorem}
 \numberwithin{thm}{section}
 \newtheorem{lem}[thm]{Lemma}
 \newtheorem{prop}[thm]{Proposition}
 \newtheorem{cor}[thm]{Corollary}
 \newtheorem{rem}[thm]{Remark}
 \newtheorem{defin}[thm]{Definition}
 \newtheorem{fact}[thm]{Fact}
 \newtheorem{ass}{Assumption}
\journalname{...}
\begin{document}

\title{(Adaptive) Scaled gradient methods beyond locally H\"{o}lder smoothness: Lyapunov analysis, convergence rate and complexity 
}


\author{Susan Ghaderi        \and
        Morteza Rahimi\and
       Yves Moreau \and
      Masoud Ahookhosh   
}


\institute{S. Ghaderi, Y. Moreau\at
              Department of Electrical Engineering (ESAT), KU Leuven, Leuven, Belgium\\
              \email{susan.ghaderi@kuleuven.be,~Yves.Moreau@kuleuven.be}\\
           \and
            M. Rahimi, M. Ahookhosh \at
              Department of Mathematics, University of Antwerp, Middelheimlaan 1, B-2020 Antwerp, Belgium. \\
              Tel.: +123-45-678910\\
              \email{morteza.rahimi@uanwerp.be, masoud.ahookhosh@uantwerp.be}            \\
              The first author was partially supported by the Research Foundation Flanders (FWO) research project G081222N and UA BOF DocPRO4 project with ID 46929.
}

\date{Received: date / Accepted: date}

\maketitle

\begin{abstract}
This paper addresses the unconstrained minimization of smooth convex functions whose gradients are locally Hölder continuous. Building on these results, we analyze the Scaled Gradient Algorithm (SGA) under local smoothness assumptions, proving its global convergence and iteration complexity. Furthermore, under local strong convexity and the Kurdyka–{\L}ojasiewicz (KL) inequality, we establish linear convergence rates and provide explicit complexity bounds. In particular, we show that when the gradient is locally Lipschitz continuous, SGA attains linear convergence for any KL exponent. We then introduce and analyze an adaptive variant of SGA (AdaSGA), which automatically adjusts the scaling and step-size parameters. For this method, we show global convergence, and derive local linear rates under strong convexity.

\keywords{Unconstrained optimization \and Adaptive scaled gradient method \and Locally H\"{o}lder gradient \and Non-Lipschitz gtadient \and Kurdyka-{\L}ojasiewicz inequality \and Poisson linear inverse problem}
\subclass{90C06\and 90C25\and 90C26\and 49J52\and 49J53}
\end{abstract}

\section{Introduction} \label{sec:introduction}
Let us consider the unconstrained minimization problem
\begin{equation}\label{eq:p}
        \displaystyle \min_{x\in\R^n} f(x),
\end{equation}
which satisfies the following assumption:
\begin{ass}\label{assumptionSG}
    We assume that
    \begin{enumerate}[label=(A\arabic*), ref=(A\arabic*)]
        \item\label{assumptionSG-1} The function $\func{f}{\R^n}{\R}$ is a convex smooth function which its gradient is locally H\"{o}lder;
        \item\label{assumptionSG-2} The set of minimizers $\mathcal{X}^*:=\argmin_{x\in \R^n} f(x)$ is nonempty and $f^*$ is the optimal value of \eqref{eq:p}.
    \end{enumerate}
\end{ass}

In the classical setting, it is usually assumed that the function $f$ has a global Lipschitz gradient (i.e., $f\in \C_{\mu, L}^{1,1}(\R^n)$); however, there are many interesting applications that their objective functions do not hold a Lipschitz continuous gradient; for examples, matrix factorization and nonnegative matrix factorization \cite{ahookhosh2023nonEuclidean,dragomir2021quartic}, orthogonal nonnegative matrix factorization \cite{ahookhosh2021multi}, symmetric nonnegative matrix tri-factorization \cite{ahookhosh2021block}, deep matrix factorization \cite{mukkamala2019bregman}, matrix completion \cite{ahookhosh2023nonEuclidean}, Poisson linear inverse problem \cite{bauschke2016descent}, $D$-optimal design \cite{lu2018relatively}, sparse phase retrieval \cite{latafat2022bregman}, neural networks \cite{mukkamala2019bregman}, and so on. While the smooth part of the objectives of such applications does not hold a global Lipschitz gradient, they, fortunately, have a  Lipschitz gradient on any compact subset of their domain.

The {\it gradient descent} method (also known as the steepest descent method) was devised by Louis Augustin Cauchy dating back to 1847, see \cite{cauchy1847methode}, which is one of the most popular and effective approaches for solving unconstrained optimization problems that takes an initial point $x^0$ and generates a sequence converging to a critical point of a function $f$ by moving along the anti-gradient direction with the step-size $\alpha_k>0$ (i.e., $x^{k+1}=x^k-\alpha_k\nabla f(x)$). Efficient implementation of this iterative method demands a proper technique for determining the step-size $\alpha_k$. For given $L>0$, if $f$ is {\it $L$-smooth} (smooth with Lipschitz continuous gradient and the modulus $L$, i.e., $f\in\C_L^{1,1}(\R^n)$), then one has a guarantee of convergence for constant step-size $\alpha_k\in (0,\tfrac{2}{L})$; see, e.g., \cite{nesterov2018lectures}. Additionally, if we assume the strong convexity of $f$ (i.e., $f\in\C_{\mu,L}^{1,1}(\R^n)$) and $\alpha_k\in (0,\tfrac{2}{\mu+L}]$, then it attains a linear rate; see, e.g., \cite[Theorem~2.1.15]{nesterov2018lectures}. One can use Polyak's optimal \cite{polyak1987introduction} or diminishing step-sizes; however, the first one requires the minimum $f(x^*)$, and the second usually results in slow convergence. Moreover, we can also specify $\alpha_k$ with either an exact or a monotone or nonmonotone inexact (e.g., Armijo, Goldstein, or Wolfe) line search; see, e.g., \cite{ahookhosh2012class,ahookhosh2017efficiency,amini2014inexact,bertsekas2016nonlinear,grippo1986nonmonotone,nocedal2006numerical}; however, applying line search imposes some computational cost to the method, which will be the bottleneck of the algorithm for large-scale problems. 

The {\it scaled gradient method} is an iterative descent method (i.e., $x^{k+1}=x^k+\alpha_kd^k$ with the descent direction $\innprod{\nabla f(x^k)}{d^k}<0$), where, compared to the generic gradient method, the anti-gradient direction $d^k=-\nabla f(x^k)$ is replaced by $d^k=-\gamma_k\nabla f(x^k)$ for the scaling parameter $\gamma_k>0$. It is well-known that the scaled gradient method can attain much faster convergence than the generic gradient method, where the parameter $\gamma_k$ has a key role in the efficiency of this method, and so various choices have been suggested in the literature. For example, spectral gradient directions, as a special case of scaled gradient direction, are given by
\begin{equation}\label{eq:sepectralGradDir}
    \gamma_k^{BB1}=\max\set{\gamma_{\min},\min\set{\alpha_k^{BB1},\gamma_{\max}}}, \quad \gamma_k^{BB2}=\max\set{\gamma_{\min},\min\set{\alpha_k^{BB2},\gamma_{\max}}}
\end{equation}
where $\gamma_k\in [\gamma_{\min},\gamma_{\max}]$ for $\gamma_{\max}\geq\gamma_{\min}>0$ and $\alpha_k^{BB1}$ and $\alpha_k^{BB1}$ are Barzilai-Borwein steps \cite{barzilai1988twopoint} given by
\begin{equation}\label{eq:BB1BB2}
    \alpha_k^{BB1}:=\frac{\|\nabla f(x^k)-\nabla f(x^{k-1})\|^2}{\innprod{\nabla f(x^k)-\nabla f(x^{k-1})}{x^k-x^{k-1}}}, \quad \alpha_k^{BB2}:=\frac{\innprod{\nabla f(x^k)-\nabla f(x^{k-1})}{x^k-x^{k-1}}}{\|x^k-x^{k-1}\|^2}.
\end{equation}
which were further studied in infinite-dimensional spaces in \cite{azmi2020analysis}. There are many variants of such steps for the spectral gradient direction; see, e.g., \cite{babaie2013modified,biglari2013scaling,dai2002modified}. It is clear that the scaled gradient methods are a subclass of the descent methods, where their global convergence has been studied under several line searches (cf. \cite{ahookhosh2017efficiency,biglari2013scaling,raydan1997barzilai}); however, they attain much better performance if they are combined with nonmonotone line searches; see, e.g., \cite{ahookhosh2017efficiency,birgin2000nonmonotone,birgin2014spectral,raydan1997barzilai}.

Recently, an {\it adaptive gradient method} was proposed in \cite{malitsky2019adaptive} for smooth convex optimization problems, requiring neither the global Lipschitz property of $f$ nor any line search techniques for establishing global convergence. In detail, this algorithm is given by
\begin{equation}
    x^{k+1}=x^k-\alpha_k\nabla f(x^k), \quad \alpha_k:=\left\{
    \begin{array}{ll}
        \infty &~~ \mathrm{if} ~k=0, \vspace{1mm}\\
        \min\set{\alpha_{k-1} \sqrt{1+\frac{\alpha_{k-1}}{\alpha_{k-2}}},\frac{1}{2L_k}} &~~ \mathrm{if}~ k\geq 1,
    \end{array} 
    \right.
\end{equation}
where $L_k$ stands as an approximation of the local Lipschitz modulus given by
\[
    L_k:=\frac{\|\nabla f(x^k)-\nabla f(x^{k-1})\|}{\|x^k-x^{k-1}\|},
\]
Let us emphasize that this innovative convergence analysis overlooks the monotonicity paradigm of the descent methods, as is the case for descent methods with nonmonotone line searches \cite{ahookhosh2012class,ahookhosh2017efficiency,amini2014inexact,grippo1986nonmonotone}; however, it constitutes the monotonicity of a specific Lyapunov function which is a necessary condition to verify its convergence analysis of the developed approach. More recently, this idea has been further extended in \cite{malitsky2023adaptive} for finding a less conservative step-size, leading to a faster convergence for the gradient method. The favorable properties of the above-mentioned adaptive step-size and fast convergence of the scaled gradient directions motivate the quest to study the scaled gradient methods under local H\"{o}lder smoothness umbrella, which is much weaker than the global $L$-smooth assumption. 

\subsection{Contribution} \label{sec:contribution}
Our contributions are threefold:
\begin{itemize}
    \item[\textbf{(i)}] \textbf{Analysis of the Scaled Gradient Algorithm (SGA).}
        we develop a comprehensive framework for studying smooth convex functions whose gradients are only locally H\"{o}lder continuous. We establish several fundamental inequalities and characterizations that extend classical smoothness results to the local H\"{o}lder setting.
        Building on this framework, we analyze the SGA under local smoothness assumptions. Global convergence and complexity bounds are derived, and linear convergence is established when the objective function satisfies either local strong convexity or the KL inequality. In particular, it is shown that when the gradient is locally Lipschitz continuous, SGA achieves linear convergence for any KL exponent.
    \item[\textbf{(ii)}] \textbf{Adaptive Scaled Gradient Algorithm (AdaSGA).}
        The second part of the paper introduces AdaSGA, which dynamically adjusts the scaling and step-size parameters during the iterations. A Lyapunov-based analysis is carried out to demonstrate the monotone descent property and global convergence of the algorithm. Moreover, under local strong convexity, we establish linear convergence and provide complexity guarantees consistent with those of SGA.
\end{itemize}

\subsection{Organization} \label{sec:organization}
This paper has five sections, besides this introductory section. Section~\ref{sec:prelimNota} provides the necessary notation, some definitions, and facts. In Section~\ref{sec:SGA}, we study the convergence of the scaled gradient method and establish the linear convergence under extra local strong convexity and the KL inequality separately. In Section~\ref{sec:AdaSGA}, we introduce our adaptive scaled gradient method, study its convergence and complexity, and establish its linear rate under extra local strong convexity. Finally, Section~\ref{sec:conclusion} delivers some concluding remarks.

\section{Preliminaries and notation} \label{sec:prelimNota}

\subsection{\textbf{Notation}}
In this paper, $\R^n$ is $n$-dimensional real Euclidean space equipped with the standard inner product \(\innprod{\cdot}{\cdot}\) and the Euclidean norm \(\|\cdot\|=\sqrt{\innprod{\cdot}{\cdot}}\). The notation $\N$ stands for the set of natural numbers and $\N_{0}:=\N\cup \{0\}$.
The open and closed balls of radius $r>0$ centered at $x\in\R^n$ are respectively expressed as $B(x;r)$ and $\overline B(x;r)$. The interior, closure, and boundary of a set $S\subseteq\R^n$ are respectively denoted as $\interior S, \overline S$, and $\mathrm{bdry} S:=\overline S\setminus \interior S$. The Euclidean distance of a point $x\in\R^n$ to a nonempty set $S\subseteq\R^n$ is given by $\dist(x;S)=\inf_{z\in S} \|z-x\|$.
Moreover, the Euclidean projection of the point $x$ onto $S$ is given by $\proj_{S}(x):=\argmin_{z\in S} \|z-x\|$.
A function $\func{h}{\R^n}{\R}$ is coercive if $\lim_{\|x\|\to\infty}h(x)= \infty$.

\begin{fact}[\textbf{Convergence of a sequence with positive elements}]
\label{fac:convRate1} \cite[Lemma 15]{boct2020proximal}
    Let $\theta,\beta>0$ and let $\seq{s^k}$ be a sequence in $\R_+$ converging to $0$ and satisfying
    \begin{equation}\label{eq:sk}
        (s^k)^{\theta}\leq\beta(s^k-s^{k+1}),
    \end{equation}
    holds for $k\in\N$ sufficiently large.
    Then, the following assertions hold:
    \begin{enumerate}
        \item\label{fac:convRate1-1} If $\theta=0$, the sequence $\seq{s^k}$ converges to $0$ in a finite number of steps;
        \item\label{fac:convRate1-2} If $\theta\in(0,1]$, the sequence $\seq{s^k}$ converges linearly to $0$ with the rate $1-\tfrac{1}{\beta}$;
        \item\label{fac:convRate1-3} If $\theta>1$, there exists $\sigma>0$ such that 
        \[
            0\leq s^k\leq \sigma k^{-\tfrac{1}{\theta-1}},
        \]
        for $k\in\N_{0}$ sufficiently large.
    \end{enumerate}
\end{fact}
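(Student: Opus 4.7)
The plan is to handle the three cases separately, each following from the recursive inequality $(s^k)^\theta \leq \beta(s^k - s^{k+1})$ in an elementary way, with case (c) requiring the most care.

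For case (a), I would simply observe that $(s^k)^0 = 1$, so the inequality reads $1 \leq \beta(s^k - s^{k+1})$ for all large $k$. If the tail were strictly positive, this would force $s^k - s^{k+1} \geq 1/\beta > 0$, contradicting $s^k \to 0$ by telescoping. Hence $s^k = 0$ from some index onward. For case (b), I would use that $s^k \to 0$ to find an index $k_0$ beyond which $s^k \leq 1$, so that $\theta \in (0,1]$ gives $s^k \leq (s^k)^\theta$. Substituting into the hypothesis yields $s^k \leq \beta(s^k - s^{k+1})$, which rearranges to $s^{k+1} \leq (1 - 1/\beta) s^k$. When $\beta \leq 1$ this forces $s^{k+1} \leq 0$ and hence termination at $0$; otherwise it gives the asserted linear rate with constant $1 - 1/\beta$, and passing to the regime $k \geq k_0$ only changes the leading constant.

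Case (c) is where the real work lies, and I expect it to be the main obstacle. The standard trick is to apply the map $\phi(t) = t^{-(\theta-1)}$, which is smooth and strictly decreasing on $(0,\infty)$ with derivative $\phi'(t) = -(\theta-1)t^{-\theta}$. I would first dispose of the possibility $s^k = 0$ for some $k$ (which by the hypothesis propagates $s^{j} = 0$ for all subsequent $j$), so that WLOG $s^k > 0$. Then, applying the mean value theorem to $\phi$ between $s^{k+1}$ and $s^k$, and using monotonicity $s^{k+1} \leq s^k$ together with the bound $s^k - s^{k+1} \geq (s^k)^\theta/\beta$, I would obtain
\begin{equation*}
    (s^{k+1})^{-(\theta-1)} - (s^k)^{-(\theta-1)} \;\geq\; (\theta-1)\,\xi^{-\theta}\,(s^k - s^{k+1}) \;\geq\; \frac{\theta-1}{\beta},
\end{equation*}
where the intermediate point $\xi \in [s^{k+1},s^k]$ satisfies $\xi^{-\theta} \geq (s^k)^{-\theta}$. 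Telescoping this inequality from a fixed index $k_0$ up to $k$ yields $(s^k)^{-(\theta-1)} \geq (\theta-1)(k-k_0)/\beta$, and raising both sides to the power $-1/(\theta-1)$ delivers the claimed bound $s^k \leq \sigma k^{-1/(\theta-1)}$ for a suitable constant $\sigma > 0$, absorbing the shift $k_0$ into $\sigma$ for $k$ large enough.

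The main subtlety in case (c) is ensuring that the intermediate value $\xi$ produced by the MVT provides the correct lower bound $\xi^{-\theta} \geq (s^k)^{-\theta}$; this is where the monotonicity of $\seq{s^k}$ (which is not assumed outright but follows from the recursion together with $s^k \to 0$) is used. All other steps — the telescoping, the rearrangement, and the absorption of additive constants — are routine once this estimate is in place.
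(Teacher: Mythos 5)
The paper does not prove this statement at all: it is imported verbatim as a \emph{Fact} from \cite[Lemma 15]{boct2020proximal}, so there is no in-paper argument to compare against. Your proof is correct and is the standard one for this type of recursion: the $\theta=0$ and $\theta\in(0,1]$ cases follow from the elementary manipulations you describe (including the correct observation that $\beta\le 1$ forces finite termination), and the $\theta>1$ case via the mean value theorem applied to $t\mapsto t^{1-\theta}$, the bound $\xi^{-\theta}\ge (s^k)^{-\theta}$, and telescoping is exactly the classical argument. One minor remark: the monotonicity $s^{k+1}\le s^k$ needed to place $\xi$ below $s^k$ follows directly from the recursion alone, since $s^k-s^{k+1}\ge (s^k)^\theta/\beta\ge 0$; the convergence $s^k\to 0$ is not needed for that step.
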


The following fact provides some sufficient conditions for the existence of an optimal solution of problem \eqref{eq:p}, along with some properties and first-order characterization of optimal solutions.

\begin{fact}\label{fac:optimality}\cite{rockafellar2011variational}
    Let $\func{f}{\R^n}{\R}$ be a lower semicontinuous function. Then, the following assertions hold:
    \begin{enumerate}
        \item
            If the function $f$ is coercive or has a bounded level set, then $\X^*$ is nonempty and compact;
        \item
            If the function $f$ satisfies Assumption~\ref{assumptionSG}, then, the optimal set $\X^{*}$ is a nonempty, closed, and convex set. Furthermore, $x^{*} \in \X^*$ if and only if $\nabla f(x^*)=0$.
    \end{enumerate}
\end{fact}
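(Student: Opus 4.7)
The statement splits into two essentially independent parts, both of which rely on standard machinery (Weierstrass, convex subdifferential calculus), so I do not anticipate any serious obstacle; the plan is mainly to organize the arguments cleanly enough that the later theorems can cite Fact \ref{fac:optimality} without ambiguity.

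For part (i), the plan is to reduce existence to a compactness-plus-continuity argument. I fix an arbitrary $x^0\in\R^n$ and consider the sublevel set $\eL:=\{x\in\R^n:\, f(x)\leq f(x^0)\}$. Lower semicontinuity of $f$ makes $\eL$ closed, and either hypothesis (coercivity, or a bounded level set containing $\eL$) forces $\eL$ to be bounded, hence compact. Minimizing $f$ over $\eL$ via Weierstrass produces a point attaining the infimum, and this point is automatically a global minimizer since any $x\notin\eL$ satisfies $f(x)>f(x^0)\geq f^*$. Finally, $\X^*=\{x:\, f(x)\leq f^*\}\subseteq\eL$ is closed by lower semicontinuity and bounded by inclusion, hence compact.

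For part (ii), nonemptiness is already encoded in Assumption \ref{assumptionSG}\ref{assumptionSG-2}. Closedness follows because $f$ is $C^1$ (it has a locally H\"older gradient), so $\X^*=f^{-1}(\{f^*\})$ is the preimage of a closed singleton under a continuous map. Convexity follows from convexity of $f$: given $x,y\in\X^*$ and $\lambda\in[0,1]$, Jensen's inequality yields
\[
    f(\lambda x + (1-\lambda)y)\leq \lambda f(x)+(1-\lambda)f(y)=f^*,
\]
so the convex combination lies in $\X^*$.

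The first-order characterization uses both directions of the convex gradient inequality. For necessity, if $x^*\in\X^*$ then Fermat's rule for an unconstrained differentiable function forces $\nabla f(x^*)=0$. For sufficiency, assume $\nabla f(x^*)=0$; convexity and differentiability give, for every $x\in\R^n$,
\[
    f(x)\geq f(x^*)+\innprod{\nabla f(x^*)}{x-x^*}=f(x^*),
\]
so $x^*\in\X^*$. If any step were to be delicate, it would be ensuring continuity of $f$ from the locally H\"older gradient hypothesis, but this is immediate since local H\"older continuity of $\nabla f$ already presupposes that $\nabla f$ (and hence $f$) is defined and continuous on $\R^n$.
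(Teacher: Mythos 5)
The paper does not prove this statement at all: it is recorded as a \emph{Fact} and attributed to \cite{rockafellar2011variational}, so there is no internal proof to compare against. Your argument is a correct, self-contained derivation of the standard result along the expected lines (Weierstrass on a closed bounded sublevel set for existence and compactness; continuity, Jensen, and the gradient inequality for the structure of $\X^*$ and the first-order characterization). The only point worth tightening is in part (i): the hypothesis ``has a bounded level set'' must be read as ``has a nonempty bounded level set $\{x: f(x)\le c\}$,'' and you should then choose $x^0$ with $f(x^0)\le c$ rather than ``an arbitrary $x^0$,'' so that $\eL=\{x: f(x)\le f(x^0)\}$ is contained in that bounded set; as written, ``a bounded level set containing $\eL$'' quietly assumes what needs to be arranged. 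With that adjustment the proof is complete and matches the cited source's level of generality restricted to finite-valued $f$ on $\R^n$.
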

\section{Gradient methods under local smoothness} \label{sec:SGA}
This section deals with two main topics: (i) adaptation of the classical results of smoothness and strong convexity to their local counterparts; and (ii) description and analysis of the scaled gradient algorithm. We begin by verifying and adapting the known smoothness and strong convexity results that are necessary in analyzing scaled gradient methods and in the remainder of our study.

\subsection{\textbf{$\nu$-H\"{o}lder continuity, $L$-smoothness, $\mu$-strong convexity, and KL inequality}} \label{sec:localSmoothStrConv}

\subsubsection{\textbf{$\nu$-H\"{o}lder continuity and $L$-smoothness}} \label{sec:HOSMO}

Let $D \subseteq \mathbb{R}^n$ be a nonempty set, and let $\func{F}{D}{\mathbb{R}^m}$ be a mapping. 
The mapping $F$ is said to be \emph{$\nu$-H\"{o}lder continuous} on a nonempty set $S \subseteq D$ with $\nu \in (0,1]$ 
if there exists a constant $L > 0$, called the \emph{H\"{o}lder constant}, such that
\[
    \|F(x) - F(y)\| \leq L \|x - y\|^{\nu}, \quad \forall\, x, y \in S.
\]
The smallest such constant is referred to as the \emph{H\"{o}lder modulus} of $F$ on $S$ and is given by
\begin{equation}\label{eq:LipschitzModulus}
    L_{S} := \sup_{\substack{x,y \in S \\ x \neq y}} 
    \frac{\|F(x) - F(y)\|}{\|x - y\|^{\nu}}.
\end{equation}
The mapping $F$ is called \emph{$\nu$-H\"{o}lder continuous} if it satisfies the above condition on the entire set $D$.

For a given point $\bar{x} \in D$, the mapping $F$ is said to be \emph{locally $\nu$-H\"{o}lder} at $\bar{x}$ if there exists a radius $\delta > 0$ such that 
$F$ is $\nu$-H\"{o}lder continuous on $D \cap B(\bar{x}; \delta)$. 
More generally, $F$ is \emph{locally $\nu$-H\"{o}lder on $S$} if it is locally $\nu$-H\"{o}lder at every point $x \in S$. 
Finally, $F$ is simply referred to as \emph{locally $\nu$-H\"{o}lder} (or simply \emph{locally H\"{o}lder} when $\nu$ is clear from the context) if this property holds on the entire set $D$.

Let $S \subseteq \mathbb{R}^n$ be a nonempty set. 
We denote by $\mathcal{C}^{1}(S)$ the class of functions $\func{h}{\mathbb{R}^n}{\mathbb{R}}$ that are continuously differentiable on $S$. 
A function $\func{h}{\mathbb{R}^n}{\mathbb{R}}$ is said to be \emph{$(\nu, L)$-H\"{o}lder smooth} (or simply \emph{$L$-smooth} when $\nu = 1$) on $S$ 
if it is differentiable on $S$ and its gradient $\nabla h$ is $\nu$-H\"{o}lder continuous with constant $L$ on $S$. 
The class of such functions is denoted by $\mathcal{C}^{1,\nu}_{L}(S)$. 
Moreover, the class of functions $\func{h}{\mathbb{R}^n}{\mathbb{R}}$ that are differentiable on $S$ and its gradient $\nabla h$ is locally $\nu$-H\"{o}lder on $S$
is denoted by $\mathcal{C}^{1,\nu+}(S)$ (or simply $\mathcal{C}^{1,+}(S)$ when $\nu = 1$; see, e.g.,~\cite[Chapter~9]{rockafellar2011variational}).
In the special case $\nu = 1$, the terms \lq\lq$\nu$-H\"{o}lder" and \lq\lq H\"{o}lder" reduce to \lq\lq Lipschitz". 

Let $c>0$ and $S\subseteq \R^n$ be a nonempty set, and let $\func{h}{\R^n}{\R}$ be a differentiable function on $S$. The function $h$ is said to have $\tfrac{1}{c}$-cocoercive gradient on $S$ if
\begin{equation}\label{eq:cocoercivity}
    \innprod{\nabla h(y)-\nabla h(x)}{y-x}\geq \tfrac{1}{c} \|\nabla h(y)-\nabla h(x)\|^2, \quad \forall x, y\in S.
\end{equation}

In the following, we establish several characterizations of $L$-smoothness and $\tfrac{1}{L}$-cocoerciveness for continuously differentiable functions defined on a convex set with nonempty interior. These characterizations are presented in \cite[Theorem 2.1.5]{nesterov2018lectures} for the entire space $\mathbb{R}^n$, and \cite[Lemma 3.2]{perez2021enhanced} extends them to convex open sets. In Proposition~\ref{pro:SmoothChar}, we further generalize these results to an arbitrary convex set $S$ with nonempty interior.

\begin{prop}[\textbf{$L$-smoothness and $\tfrac{1}{L}$-cocoerciveness characterization}]\label{pro:SmoothChar}
    Let $L>0$, let $D\subseteq \R^n$ be a nonempty set with $\interior D\neq \emptyset$, and let $\func{h}{D}{\R}$ be a convex smooth function on a nonempty convex set $S\subseteq \interior D$ with $\interior S\neq \emptyset$. Then, the following assertions are equivalent:
    \begin{enumerate}
        \item\label{pro:SmoothChar-1} The function $h$ is a $L$-smooth function on $S$, i.e., $h\in \C^{1,1}_L(S)$;
        \item\label{pro:SmoothChar-2} The function $\tfrac{L}{2}\|x\|^2-h(x)$ is convex on $S$;
        \item\label{pro:SmoothChar-3} The function $h$ has $\tfrac{1}{L}$-cocoercive gradient on $S$;
        \item\label{pro:SmoothChar-4} $0\le\innprod{\nabla h(y)-\nabla h(x)}{y-x}\leq L \|y-x\|^2,$ for any $x, y\in S$;
        \item\label{pro:SmoothChar-5} $0\leq h(y)- h(x)-\innprod{\nabla h(x)}{y-x} \leq \tfrac{L}{2}\|y-x\|^2$, for any $x,y \in S$.
    \end{enumerate}
\end{prop}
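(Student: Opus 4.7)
The plan is to reduce the proposition to its counterpart on the open convex set $\interior S$, where \cite[Lemma 3.2]{perez2021enhanced} already establishes the equivalence of (a)--(e), and then to propagate each characterization from $\interior S$ to $S$ by a density-plus-continuity argument.

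The first ingredient is an elementary line-segment lemma from convex analysis: since $S$ is convex with $\interior S \neq \emptyset$, for any $x \in S$ and any fixed $z \in \interior S$ we have $(1-t)x + tz \in \interior S$ for all $t \in (0,1]$. Consequently $S \subseteq \overline{\interior S}$ and every point of $S$ is the limit of a sequence in $\interior S$. Combined with $h \in \C^1(S)$, which ensures that both $h$ and $\nabla h$ are continuous on $S$, this is the only analytical tool required for the extension step.

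Applying \cite[Lemma 3.2]{perez2021enhanced} on the open convex set $\interior S$ yields the equivalence of (a)--(e) with $S$ replaced by $\interior S$; the classical proofs go through since the segment $[x,y]$ lies in $\interior S$ for $x,y \in \interior S$ and standard calculus along that segment is valid. The extension to $S$ is then executed uniformly for (a), (c), (d), and (e): given $x, y \in S$, fix $z \in \interior S$ and set $x_t := (1-t)x + tz$ and $y_t := (1-t)y + tz$, both lying in $\interior S$ for $t \in (0,1]$. Writing the corresponding inequality on $\interior S$ for the pair $(x_t, y_t)$ and letting $t \downarrow 0$, the continuity of $h$ and $\nabla h$ on $S$ delivers the inequality for $(x, y)$. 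For assertion (b), the function $x \mapsto \tfrac{L}{2}\|x\|^2 - h(x)$ is continuous on $S$, and convexity on the dense subset $\interior S$ of the convex set $S$ extends by continuity to convexity on $S$.

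The main obstacle is checking that the limiting argument preserves the squared-gradient term appearing in the cocoercivity inequality (c). This is, however, immediate: continuity of $\nabla h$ on $S$ yields $\|\nabla h(y_t) - \nabla h(x_t)\|^2 \to \|\nabla h(y) - \nabla h(x)\|^2$ as $t \downarrow 0$, while the inner product on the left passes to the limit in the same way. Beyond this, the proof is a routine stitching together of the open-set equivalence with continuous extension; I would present it as the cycle $(a) \Rightarrow (b) \Rightarrow (c) \Rightarrow (d) \Rightarrow (e) \Rightarrow (a)$ on $S$, invoking the open-set result to produce each implication on $\interior S$ and the extension argument above to lift it to $S$.
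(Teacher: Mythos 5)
Your proposal is correct and follows essentially the same route as the paper: invoke \cite[Lemma 3.2]{perez2021enhanced} on the open set $\interior S$ and then lift each characterization to $S$ using the density of $\interior S$ in $S$ (the paper cites \cite[Theorem 6.1]{rockafellar1970convex} for the approximating sequences, which is the same line-segment fact you use) together with continuity of $h$ and $\nabla h$. The only cosmetic difference is that the paper treats (b), (d), (e) as directly equivalent by the standard integral argument and only runs the extension step for the remaining assertions, whereas you lift all five; both are fine.
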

\begin{proof}
    It is evident that Assertions \ref{pro:SmoothChar-2}, \ref{pro:SmoothChar-4}, and \ref{pro:SmoothChar-5} are equivalent. Additionally, if $S$ is an open set, Assertions \ref{pro:SmoothChar-1}, \ref{pro:SmoothChar-2}, and \ref{pro:SmoothChar-3} are equivalent by virtue of \cite[Lemma 3.2]{perez2021enhanced}. To complete the proof it suffices to show that if either assertion holds on $\interior S$, it is valid on $S$ as well.\\
    Let the function $h$ be a $L$-smooth function on $\interior S$.
    Let $\ov x,\ov y\in S$ and $\ov x\neq \ov y$. Based on \cite[Theorem 6.1]{rockafellar1970convex}, since $\interior S\neq \emptyset$ and $S$ is convex, there exist $\seq{x^k}\subseteq \interior S$ and $\seq{y^k}\subseteq \interior S$ converging to $\ov x$ and $\ov y$, respectively. For all $k\in \N$ sufficiently large, we have $x^k\neq y^k$ and
    \[\tfrac{\|\nabla h(x^k)-\nabla h(y^k)\|}{\|x^k-y^k\|}\leq L.\]
    Inasmuch as $h$ is smooth on $S$, taking limit as $k\to\infty$ yields
    \[\tfrac{\|\nabla h(\ov x)-\nabla h(\ov y)\|}{\|\ov x-\ov y\|}\leq L.\]
    The same argument but with different inequalities can be used for Assertions \ref{pro:SmoothChar-3} and \ref{pro:SmoothChar-4}.\qed
\end{proof}

We next extend our analysis to the class of $(\nu,L)$-H"older smooth functions with $\nu \in (0,1]$ and $L>0$. For any $x,y\in \R^n$, define
$$d_{x,y}:=\|\nabla h(y) - \nabla h(x)\|^{\tfrac{1-\nu}{\nu}}(\nabla h(y) - \nabla h(x)).$$
According to \cite[Theorem 2.1.5]{nesterov2018lectures}, a function $h:\R^n\to\R$ is $L$-smooth on the whole space $\mathbb{R}^n$ if and only if 
\begin{equation*}
        h(y)\geq h(x)+\innprod{\nabla h(x)}{y-x}+\tfrac{1}{2L}\|\nabla h(y)-\nabla h(x)\|^2,~~\forall x,y\in \R^n.
\end{equation*}
However, this equivalence may fail for functions defined only on open subsets of $\mathbb{R}^n$, as illustrated by a counterexample in \cite{drori2018properties}. To overcome this limitation, several studies have established analogous characterizations for functions defined on more general domains; see, for instance, \cite{drori2018properties,wachsmuth2022simple}.
Proposition~\ref{pro:HSmoothChar}~\ref{pro:HSmoothChar-3} further extends this characterization to functions defined on arbitrary convex sets under mild regularity assumptions.


\begin{prop}[\textbf{$(\nu,L)$-H\"{o}lder smoothness characterization}]\label{pro:HSmoothChar}
    Let $L>0$ and $\nu \in (0,1)$, let $D\subseteq \R^n$ be a nonempty set with $\interior D\neq \emptyset$, and let $\func{h}{D}{\R}$ be a convex smooth function on a nonempty convex set $S\subseteq \interior D$. If the function $h$ is $(\nu,L)$-H\"{o}lder smooth on $S$, then the following assertions hold:
    \begin{enumerate}
        \item\label{pro:HSmoothChar-1}
            For any $x,y \in S$, with constant $\C=L$,
            \begin{equation}\label{eq:HSmoothChar-1}
                0\le \innprod{\nabla h(y)-\nabla h(x)}{y-x}\leq \C \|y-x\|^{1+\nu};
            \end{equation}
        \item\label{pro:HSmoothChar-2}
            For any $x,y \in S$, with constant $\C=\tfrac{L}{1+\nu}$,
            \begin{equation}\label{eq:HSmoothChar-2}
                0\leq h(y)- h(x)-\innprod{\nabla h(x)}{y-x} \leq \C\|y-x\|^{1+\nu};
            \end{equation}
        \item\label{pro:HSmoothChar-3}
            For any $x,y \in S$ such that $y - \tfrac{d_{x,y}}{L^{\nicefrac{1}{\nu}}}\in S$, with constant $\C=\tfrac{\nu}{(1+\nu)L^{\nicefrac{1}{\nu}}}$,
            \begin{equation}\label{eq:HSmoothChar-3}
                h(y)\geq h(x)+\innprod{\nabla h(x)}{y-x}+\C\|\nabla h(y)-\nabla h(x)\|^{\tfrac{1+\nu}{\nu}};
            \end{equation}  
        \item\label{pro:HSmoothChar-4}
            For any $x,y \in S$ such that $y - \tfrac{d_{x,y}}{L^{\nicefrac{1}{\nu}}}\in S$ and $x + \tfrac{d_{x,y}}{L^{\nicefrac{1}{\nu}}}\in S$, with constant $\C=\tfrac{2\nu}{(1+\nu)L^{\nicefrac{1}{\nu}}}$,
            \begin{equation}\label{eq:HSmoothChar-4}
                \innprod{\nabla h(y)-\nabla h(x)}{y-x}\geq \C \|\nabla h(y)-\nabla h(x)\|^{\tfrac{1+\nu}{\nu}}.
            \end{equation}
    \end{enumerate}
\end{prop}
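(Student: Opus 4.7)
The plan is to prove the four assertions in order, with (a) and (b) following by standard techniques (Cauchy--Schwarz and integration along the segment), and then using (b) together with convexity and a carefully chosen auxiliary point to obtain (c). Assertion (d) will then fall out by symmetry.

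For \ref{pro:HSmoothChar-1}, I would use Cauchy--Schwarz to get
$\innprod{\nabla h(y)-\nabla h(x)}{y-x} \leq \|\nabla h(y)-\nabla h(x)\|\,\|y-x\| \leq L\|y-x\|^{1+\nu}$, invoking the $\nu$-H\"{o}lder continuity of $\nabla h$. The lower bound is nothing but monotonicity of the gradient of a convex differentiable function. For \ref{pro:HSmoothChar-2}, convexity of $S$ ensures that the segment $[x,y]$ lies in $S$, so I would write
\[
    h(y)-h(x)-\innprod{\nabla h(x)}{y-x} = \int_0^1\innprod{\nabla h(x+t(y-x))-\nabla h(x)}{y-x}\,dt
\]
and bound the integrand above via H\"{o}lder continuity by $Lt^\nu\|y-x\|^{1+\nu}$, integrating to $\tfrac{L}{1+\nu}\|y-x\|^{1+\nu}$. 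The lower bound is the gradient inequality for convex differentiable $h$.

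The main step is \ref{pro:HSmoothChar-3}. Setting $u := \nabla h(y)-\nabla h(x)$, the vector $d_{x,y}=\|u\|^{(1-\nu)/\nu}u$ is designed so that $\|d_{x,y}\|=\|u\|^{1/\nu}$ and $\innprod{u}{d_{x,y}}=\|u\|^{(1+\nu)/\nu}$. Defining $z:=y-d_{x,y}/L^{1/\nu}$, which lies in $S$ by hypothesis, I would combine two inequalities: convexity of $h$ yields
\[
    h(z)\geq h(x)+\innprod{\nabla h(x)}{z-x},
\]
while \ref{pro:HSmoothChar-2} applied to the pair $(y,z)\in S\times S$ (the segment joining them is in $S$) gives
\[
    h(z)\leq h(y)+\innprod{\nabla h(y)}{z-y}+\tfrac{L}{1+\nu}\|z-y\|^{1+\nu}.
\]
Subtracting and rearranging, I obtain
\[
    h(y)-h(x)-\innprod{\nabla h(x)}{y-x} \geq \innprod{\nabla h(x)-\nabla h(y)}{z-y}-\tfrac{L}{1+\nu}\|z-y\|^{1+\nu}.
\]
A direct substitution of $z-y=-d_{x,y}/L^{1/\nu}$ simplifies the right-hand side to $\tfrac{\|u\|^{(1+\nu)/\nu}}{L^{1/\nu}}-\tfrac{\|u\|^{(1+\nu)/\nu}}{(1+\nu)L^{1/\nu}}=\tfrac{\nu}{(1+\nu)L^{1/\nu}}\|u\|^{(1+\nu)/\nu}$, which is exactly \eqref{eq:HSmoothChar-3}. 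The key obstacle is really just recognizing that $d_{x,y}$ is the right scaled test direction; all subsequent algebra is routine.

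Finally, for \ref{pro:HSmoothChar-4}, I would observe that $d_{y,x}=-d_{x,y}$, so the assumption $x+d_{x,y}/L^{1/\nu}\in S$ is precisely what is needed to apply \ref{pro:HSmoothChar-3} with the roles of $x$ and $y$ swapped. Adding the two instances of \ref{pro:HSmoothChar-3} cancels the function values $h(x)$ and $h(y)$, doubles the $\|u\|^{(1+\nu)/\nu}$ term, and leaves $\innprod{\nabla h(y)-\nabla h(x)}{y-x}$ on the other side, producing \eqref{eq:HSmoothChar-4} with the stated constant $\tfrac{2\nu}{(1+\nu)L^{1/\nu}}$.
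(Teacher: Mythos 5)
Your proof is correct and follows essentially the same route as the paper: Cauchy--Schwarz for \ref{pro:HSmoothChar-1}, integration along the segment for \ref{pro:HSmoothChar-2}, evaluating the descent model at the test point $y-\tfrac{d_{x,y}}{L^{\nicefrac{1}{\nu}}}$ for \ref{pro:HSmoothChar-3}, and symmetrization for \ref{pro:HSmoothChar-4}. The only cosmetic difference is in \ref{pro:HSmoothChar-3}, where the paper introduces the auxiliary function $g(z)=h(z)-\innprod{\nabla h(x)}{z}$ and minimizes its upper model over $S$, while you combine the convexity inequality at $(x,z)$ with the descent lemma at $(y,z)$ directly; the two computations are identical.
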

\begin{proof}
    \ref{pro:HSmoothChar-1} This result is a direct consequence of $(\nu,L)$-H\"{o}lder smoothness and convexity of $h$ together with Cauchy-Schwarz inequality.\\
    \ref{pro:HSmoothChar-2} Let $x,y \in S$. From \ref{pro:HSmoothChar-1} and convexity of $h$, we deduce
    {\small
    \begin{align*}
        0\le h(y)- h(x)-\innprod{\nabla h(x)}{y-x} = \int_0^1 \innprod{\nabla h(x+t(y-x))-\nabla h(x)}{y-x} dt
        \le \int_0^1 Lt^{\nu}\|x-y\|^{1+\nu}dt = \tfrac{L}{1+\nu}\|x-y\|^{1+\nu}.
    \end{align*}
    }
    \ref{pro:HSmoothChar-3}
    Let us consider $x,y\in S$ such that $y - \tfrac{d_{x,y}}{L^{\nicefrac{1}{\nu}}}\in S$. Define $g(z):=h(z)-\innprod{\nabla h(x)}{z}$, which is a $\C^{1,\nu}_L(S)$ convex smooth function and attaints its minimum on $S$ at $z=x$. 
    By virtue of Assertion \ref{pro:HSmoothChar-2},
    $$g(z)\le g(w)+\innprod{\nabla g(w)}{z-w} + \tfrac{L}{1+\nu}\|z-w\|^{1+\nu}, \quad\quad \forall z, w \in S.$$
    Using this inequality, we come to
    \begin{align*}
        h(x) - \innprod{\nabla h(x)}{x} &= g(x) = \min_{z\in S} g(z) \le \min_{z\in S} \left\{g(y)+\innprod{\nabla g(y)}{z-y} + \tfrac{L}{1+\nu}\|z-y\|^{1+\nu}\right\}\\
        &= g(y)-\tfrac{\nu}{(1+\nu)L^{\nicefrac{1}{\nu}}}\|\nabla g(y)\|^{\tfrac{1+\nu}{\nu}}
        =h(y)-\innprod{\nabla h(x)}{y} - \tfrac{\nu}{(1+\nu)L^{\nicefrac{1}{\nu}}}\|\nabla h(y)-\nabla h(x)\|^{\tfrac{1+\nu}{\nu}},
    \end{align*}
    where the third equality follows from the fact that the minimum of the quadratic model is attained at $$z = \overline{y} - \tfrac{1}{L^{\nicefrac{1}{\nu}}}\|\nabla g(y)\|^{\tfrac{1-\nu}{\nu}}\nabla g(y)=y - \tfrac{d_{x,y}}{L^{\nicefrac{1}{\nu}}}\in S.$$
    Assertion \ref{pro:HSmoothChar-4} is derived from \ref{pro:HSmoothChar-3} by summing inequality~\eqref{eq:HSmoothChar-3} evaluated at $(x,y)$ and $(y,x)$. This completes the proof.\qed
\end{proof}

\begin{cor}\label{cor:HSmoothChar}
    Let $L>0$ and $\nu \in (0,1]$, let $D\subseteq \R^n$ be a nonempty set with $\interior D\neq \emptyset$, and let $\func{h}{D}{\R}$ be a convex smooth function on a nonempty convex set $S\subseteq \interior D$. Then, the following statements hold:
    \begin{enumerate}
        \item\label{cor:HSmoothChar-1}
            If \eqref{eq:HSmoothChar-1} holds on $S$ with constant $\ov\C$, then \eqref{eq:HSmoothChar-2} holds on $S$ with constant $\tfrac{\ov\C}{1+\nu}$.
        \item\label{cor:HSmoothChar-2}
            If \eqref{eq:HSmoothChar-2} holds  on $S$ with constant $\ov\C$, then for any $x,y\in S$ such that $y - \tfrac{d_{x,y}}{L^{\nicefrac{1}{\nu}}}\in S$, the inequality  \eqref{eq:HSmoothChar-3} holds with constant $\tfrac{\nu}{(1+\nu)^{\nicefrac{(1+\nu)}{\nu}} {\ov\C}^{\nicefrac{1}{\nu}}}$. 
        \item\label{cor:HSmoothChar-3}    
            If \eqref{eq:HSmoothChar-2} (resp. \eqref{eq:HSmoothChar-3}) holds on $S$ with constant $\ov \C$, then the inequality \eqref{eq:HSmoothChar-1} (resp. \eqref{eq:HSmoothChar-4}) holds on $S$ with constant $2\ov\C$.
        \item\label{cor:HSmoothChar-4}
            If \eqref{eq:HSmoothChar-4} holds on $S$ with constant $\ov\C$, then
            $h$ is $(\nu,\tfrac{1}{\C^\nu})$-H\"{o}lder smooth on $S$.
        \item\label{cor:HSmoothChar-5}
            Let $\nu=1$. If $h$ is a $L$-smooth function on $S$, i.e., $h\in \C^{1,1}_L(S)$, then for any $x,y\in S$ such that $y-\tfrac{1}{L}\big(\nabla h(y)-\nabla h(x)\big)\in S$ one has
            \begin{equation}\label{eq:smoothconvex2}
                h(y)\geq h(x)+\innprod{\nabla h(x)}{y-x}+\tfrac{1}{2L}\|\nabla h(y)-\nabla h(x)\|^2.
            \end{equation}
            Conversely, if the inequality \eqref{eq:smoothconvex2} holds for any $x,y\in S$, then $h$ is a $L$-smooth function on $S$.
    \end{enumerate}
\end{cor}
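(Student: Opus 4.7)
The plan is to build on the constructions already used in the proof of Proposition~\ref{pro:HSmoothChar}, since each assertion of the corollary amounts to tracking a single constant through an argument whose skeleton is already in place. Parts~\ref{cor:HSmoothChar-1} and~\ref{cor:HSmoothChar-2} recycle the integration and quadratic-minimization templates, while parts~\ref{cor:HSmoothChar-3}--\ref{cor:HSmoothChar-5} follow by symmetric summation and Cauchy--Schwarz.

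For~\ref{cor:HSmoothChar-1} I would start from
\[
    h(y) - h(x) - \innprod{\nabla h(x)}{y-x} = \int_0^1 \innprod{\nabla h(x + t(y-x)) - \nabla h(x)}{y-x}\, dt,
\]
and bound the integrand via \eqref{eq:HSmoothChar-1} (applied with constant $\ov\C$ to the pair $(x, x + t(y-x)) \in S\times S$); integrating $t^\nu$ over $[0,1]$ produces the claimed factor $1/(1+\nu)$. For~\ref{cor:HSmoothChar-2} I would again introduce $g(z) := h(z) - \innprod{\nabla h(x)}{z}$, which under \eqref{eq:HSmoothChar-2} obeys $g(z) \le g(w) + \innprod{\nabla g(w)}{z-w} + \ov\C\|z-w\|^{1+\nu}$ on $S$ and attains its $S$-minimum at $z = x$. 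The unconstrained minimizer of the right-hand side (with $w = y$) lies along the ray $y - \alpha d_{x,y}$ for an explicit $\alpha > 0$; assuming this point lies in $S$, the inequality $g(x) \le g(z)$ evaluated there yields \eqref{eq:HSmoothChar-3} with the announced constant.

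For~\ref{cor:HSmoothChar-3} I would add the assumed inequality at $(x,y)$ and $(y,x)$: applied to \eqref{eq:HSmoothChar-2}, the terms $h(y)-h(x)$ and $h(x)-h(y)$ cancel, leaving $\innprod{\nabla h(y)-\nabla h(x)}{y-x} \le 2\ov\C\|y-x\|^{1+\nu}$; applied to \eqref{eq:HSmoothChar-3}, the $h$-values cancel and give \eqref{eq:HSmoothChar-4} with constant $2\ov\C$. Part~\ref{cor:HSmoothChar-4} follows by Cauchy--Schwarz on the left-hand side of \eqref{eq:HSmoothChar-4}, yielding $\ov\C\|\nabla h(y)-\nabla h(x)\|^{(1+\nu)/\nu} \le \|y-x\|\|\nabla h(y)-\nabla h(x)\|$; dividing and raising to the $\nu$-th power delivers $\nu$-H\"older continuity of $\nabla h$ with constant $1/\ov\C^{\nu}$. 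Part~\ref{cor:HSmoothChar-5} is Proposition~\ref{pro:HSmoothChar}\ref{pro:HSmoothChar-3} specialized to $\nu=1$ in the forward direction; the converse applies~\ref{cor:HSmoothChar-3} to \eqref{eq:smoothconvex2} (with $\ov\C = 1/(2L)$) to obtain $1/L$-cocoercivity on $S$, whence Cauchy--Schwarz immediately restores $h \in \C^{1,1}_L(S)$.

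The point requiring the most care is the domain condition in~\ref{cor:HSmoothChar-2}: the exact location of the unconstrained minimizer of the quadratic model scales with $\ov\C$, not with the ambient $L$ that appears in the hypothesis $y - d_{x,y}/L^{\nicefrac{1}{\nu}} \in S$, so one must either reconcile the two scalings explicitly or appeal to the convexity of $S$ along the segment from $y$ to the given admissible point in order to guarantee that the true minimizer still lies inside $S$.
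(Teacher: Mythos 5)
Your proposal is correct and matches what the paper intends: the paper's proof of this corollary is simply ``The claim is immediate,'' and your argument fills in the details by re-running the integration, quadratic-minimization, symmetric-summation, and Cauchy--Schwarz steps from the proof of Proposition~\ref{pro:HSmoothChar} while tracking the general constant $\ov\C$. The domain-condition subtlety you flag in part~\ref{cor:HSmoothChar-2} is real --- the unconstrained minimizer sits at $y-d_{x,y}/((1+\nu)\ov\C)^{\nicefrac{1}{\nu}}$ rather than at $y-d_{x,y}/L^{\nicefrac{1}{\nu}}$, and the two coincide only when $L=(1+\nu)\ov\C$ (the canonical case from Proposition~\ref{pro:HSmoothChar}~\ref{pro:HSmoothChar-2}) --- but this is an imprecision in the corollary's statement rather than a gap in your argument.
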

\begin{proof}
    The claim is immediate.\qed
\end{proof}

\subsubsection{\textbf{$\mu$-Strong convexity}} \label{sec:STC}

Let $\mu>0$ and $S\subseteq \R^n$ be a nonempty convex set, and let $h\in \C^1(S)$. The function $h$ is said to be {\it $\mu$-strongly convex} on $S$ if
\begin{equation}\label{eq:strongconvex}
    h(y)\geq h(x)+\innprod{\nabla h(x)}{y-x}+\tfrac{\mu}{2}\|y-x\|^2, \quad \forall x,y\in S.
\end{equation}

The next fact provides several characterizations for class of strongly convex functions.

\begin{fact}[\textbf{$\mu$-strong convexity characterization}]\label{fac:StrongChar}\cite{nesterov2018lectures}
    Let $\mu>0$ and $S\subseteq \R^n$ be a nonempty convex set, and let $h\in \C^{1}(S)$. The following statements are equivalent:
    \begin{enumerate}
        \item\label{fac:StrongChar-1} $h$ is a $\mu$-strongly convex function on $S$;
        \item\label{fac:StrongChar-2} the function $h(x)-\tfrac{\mu}{2}\|x\|^2$ is convex on $S$;
        \item\label{fac:StrongChar-3} $\innprod{\nabla h(y)-\nabla h(x)}{y-x}\geq \mu\|y-x\|^2$, for any $x,y\in S$.
    \end{enumerate}
\end{fact}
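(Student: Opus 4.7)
The plan is to prove the three equivalences by establishing the cycle \ref{fac:StrongChar-1} $\Rightarrow$ \ref{fac:StrongChar-2} $\Rightarrow$ \ref{fac:StrongChar-3} $\Rightarrow$ \ref{fac:StrongChar-1}, which is the cleanest way to handle all three directions at once. The guiding idea throughout is to reduce strong convexity of $h$ to convexity of the \emph{shifted} function $g(x) := h(x) - \tfrac{\mu}{2}\|x\|^{2}$, whose gradient is simply $\nabla g(x) = \nabla h(x) - \mu x$, and then appeal to well-known first-order characterizations of convexity for differentiable functions on a convex set.

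For \ref{fac:StrongChar-1} $\Rightarrow$ \ref{fac:StrongChar-2}, I would compute $g(y) - g(x) - \innprod{\nabla g(x)}{y-x}$ directly, noting that the quadratic $\tfrac{\mu}{2}\|\cdot\|^{2}$ contributes exactly the term $-\tfrac{\mu}{2}\|y-x\|^{2}$ after expansion of $-\tfrac{\mu}{2}\|y\|^{2} + \tfrac{\mu}{2}\|x\|^{2} + \mu\innprod{x}{y-x}$. Strong convexity of $h$ then cancels this term and yields $g(y) \geq g(x) + \innprod{\nabla g(x)}{y-x}$ for all $x,y \in S$, hence convexity of $g$ on the convex set $S$.

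For \ref{fac:StrongChar-2} $\Rightarrow$ \ref{fac:StrongChar-3}, I would invoke the standard monotonicity of the gradient of a differentiable convex function on a convex set: $\innprod{\nabla g(y) - \nabla g(x)}{y-x} \geq 0$. Substituting $\nabla g = \nabla h - \mu\,\mathrm{Id}$ and rearranging gives \ref{fac:StrongChar-3} immediately.

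For \ref{fac:StrongChar-3} $\Rightarrow$ \ref{fac:StrongChar-1}, I would use the fundamental theorem of calculus along the segment $[x,y] \subseteq S$ (which lies entirely in $S$ by convexity), writing
\[
    h(y) - h(x) - \innprod{\nabla h(x)}{y-x} = \int_{0}^{1} \tfrac{1}{t}\innprod{\nabla h(x + t(y-x)) - \nabla h(x)}{t(y-x)}\, dt,
\]
then apply \ref{fac:StrongChar-3} to the pair $(x,\, x + t(y-x))$ to bound the integrand from below by $\mu t \|y-x\|^{2}$, and integrate to obtain the $\tfrac{\mu}{2}\|y-x\|^{2}$ lower bound. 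No step of this cycle presents a genuine obstacle; the only point requiring a little care is ensuring that the segment $[x,y]$ lies in $S$ so that differentiability and the hypothesis \ref{fac:StrongChar-3} apply along it, which is guaranteed by the convexity of $S$ assumed in the statement.
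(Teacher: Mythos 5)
Your cyclic argument \ref{fac:StrongChar-1}~$\Rightarrow$~\ref{fac:StrongChar-2}~$\Rightarrow$~\ref{fac:StrongChar-3}~$\Rightarrow$~\ref{fac:StrongChar-1} is correct in every step: the algebraic identity $-\tfrac{\mu}{2}\|y\|^{2}+\tfrac{\mu}{2}\|x\|^{2}+\mu\innprod{x}{y-x}=-\tfrac{\mu}{2}\|y-x\|^{2}$ is right, gradient monotonicity of the shifted function $g=h-\tfrac{\mu}{2}\|\cdot\|^{2}$ gives \ref{fac:StrongChar-3}, and the integral along $[x,y]\subseteq S$ (whose integrand is continuous despite the cosmetic $\tfrac{1}{t}$ factor) recovers the $\tfrac{\mu}{2}\|y-x\|^{2}$ term. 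The paper supplies no proof of this Fact, only the citation to Nesterov, and your argument is essentially the standard one found there, so there is nothing to compare beyond confirming it is sound.
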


The largest constant $\mu$ satisfying the inequality \eqref{eq:strongconvex} (indicating $\mu$-strong convexity on $S$), denoted by $\mu_S$, is given by
\begin{equation}\label{eq:LipschitzModulus}
    \mu_{S}:=\inf_{x,y\in S,~x\neq y} \tfrac{\innprod{\nabla h(y)-\nabla h(x)}{y-x}}{\|y-x\|^2}.
\end{equation}

Let $h\in \C^1(\R^n)$. The function $h$ is said to be {\it locally strongly convex} if for any $x\in \R^n$, there exists some scalars $\delta_x>0$ and $\mu_x>0$, such that $h$ is $\mu_x$-strongly convex on $B(x;\delta_x)$.

The following proposition indicates that the locally strongly convex functions are strongly convex on any convex and compact set.

\begin{thm}\label{thm:localglobal-strong}
    Let $h\in \C^1(\R^n)$ be locally strongly convex and let $S\subseteq\R^n$ be a nonempty convex and compact set. Then, there exists $\mu> 0$ such that $h$ is $\mu$-strongly convex on $S$.
\end{thm}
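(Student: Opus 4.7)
The plan is a compactness-plus-chaining argument based on the gradient characterization in Fact~\ref{fac:StrongChar}\ref{fac:StrongChar-3}. Local strong convexity supplies, for every $x\in S$, a ball $B(x;\delta_x)$ and a modulus $\mu_x>0$ on which the inequality $\innprod{\nabla h(y)-\nabla h(z)}{y-z}\ge\mu_x\|y-z\|^2$ holds. Since $S$ is compact, I would first cover it by the open balls $\{B(x;\delta_x/2)\}_{x\in S}$, extract a finite subcover centered at $x_1,\dots,x_N$, and define the uniform constants $\mu:=\min_i \mu_{x_i}>0$ and $\lambda:=\min_i \delta_{x_i}/2>0$.

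Next I would establish a \emph{short-segment} version of the desired inequality: whenever $y,z\in S$ satisfy $\|y-z\|\le\lambda$, both points lie in a common ball $B(x_i;\delta_{x_i})$. Indeed, picking $i$ with $y\in B(x_i;\delta_{x_i}/2)$, the triangle inequality gives $\|z-x_i\|\le\|z-y\|+\|y-x_i\|\le\lambda+\delta_{x_i}/2\le\delta_{x_i}$. Applying Fact~\ref{fac:StrongChar}\ref{fac:StrongChar-3} on this ball then yields $\innprod{\nabla h(y)-\nabla h(z)}{y-z}\ge \mu_{x_i}\|y-z\|^2\ge\mu\|y-z\|^2$.

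To promote this to arbitrary pairs $y,z\in S$, I would subdivide the segment $[y,z]\subseteq S$ (contained in $S$ by convexity) into $M:=\lceil \|z-y\|/\lambda\rceil$ equal pieces with endpoints $w_j:=y+(j/M)(z-y)$ for $j=0,\dots,M$. Each consecutive pair $(w_j,w_{j+1})$ has separation $\le\lambda$, so the short-segment step applies. Since $w_{j+1}-w_j=(z-y)/M$ is a positive scalar multiple of $z-y$, each local inequality rescales to $\innprod{\nabla h(w_{j+1})-\nabla h(w_j)}{z-y}\ge \mu\|z-y\|^2/M$. Summing over $j$ telescopes the gradient differences on the left into $\nabla h(z)-\nabla h(y)$, while the right-hand sides accumulate to $\mu\|z-y\|^2$, yielding $\innprod{\nabla h(z)-\nabla h(y)}{z-y}\ge \mu\|z-y\|^2$. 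Invoking Fact~\ref{fac:StrongChar}\ref{fac:StrongChar-3} one last time concludes that $h$ is $\mu$-strongly convex on $S$.

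The main subtlety is the passage from a collection of locally defined inequalities to a single globally valid one on $S$: compactness alone yields only a uniform constant on each ball of the subcover, whereas arbitrary $y,z\in S$ may lie in different balls. The chaining along the straight segment $[y,z]$ circumvents this precisely because the subsegments $w_{j+1}-w_j$ are all collinear with $z-y$, which is what makes the rescaled local inequalities sum telescopically to the global one with the \emph{same} constant $\mu$.
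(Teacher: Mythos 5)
Your proof is correct, but it takes a genuinely different route from the paper's. The paper reduces the claim to convexity of the auxiliary function $x\mapsto h(x)-\tfrac{\mu}{2}\|x\|^2$ (via Fact~\ref{fac:StrongChar}~\ref{fac:StrongChar-2}): after extracting a finite subcover it observes that this function is \emph{locally} convex on the convex compact set $S$ and then invokes an external local-to-global convexity result (\cite[Corollary 2.4]{li2010some}) to conclude it is convex on all of $S$. You instead work directly with the gradient-monotonicity characterization of Fact~\ref{fac:StrongChar}~\ref{fac:StrongChar-3} and carry out the local-to-global passage by hand: the half-radius subcover gives a uniform Lebesgue-type gap $\lambda$, the short-segment inequality holds on any common ball, and the telescoping sum along the subdivided segment $[y,z]$ recovers the global inequality with the same constant because the increments $w_{j+1}-w_j$ are all collinear with $z-y$ (so each local bound rescales to $\mu\|z-y\|^2/M$ and the $M$ pieces sum exactly). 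Your chaining computation checks out, including the containment $[y,z]\subseteq S$ by convexity and the verification that consecutive nodes lie in a common ball of the original cover. What each approach buys: yours is fully self-contained and makes the mechanism of the local-to-global step transparent, at the cost of a slightly longer argument; the paper's is shorter but leans on the cited corollary. One further point in your favor: you correctly take $\mu:=\min_i\mu_{x_i}$, whereas the paper's proof defines $\overline{\mu}$ as the \emph{maximum} of the local moduli, which appears to be a typo --- the uniform constant valid on every ball of the subcover must be the minimum, exactly as you have it.
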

\begin{proof}
    To begin, we establish the existence of a fixed constant $\mu> 0$ such that, for each $x\in S$, the following inequality holds:
    \begin{equation}\label{eq:localstrongconvex}
        h(y)\geq h(z)+\innprod{\nabla h(z)}{y-z}+\tfrac{\mu}{2}\|y-z\|^2, \quad \forall z,y\in B(x;\delta_x),
    \end{equation}
    for some $\delta_x>0$. The local strong convexity of $h$ implies that for each point $x\in S$, there exist constants
    $\mu_{x}\geq 0$ and $\delta_{x}>0$ such that the inequality \eqref{eq:localstrongconvex} holds with $\mu_x$ replacing $\mu$ for all
    $z,y \in B(x;\delta_{x})$ . 
    The collection of open balls, $\{B(x;\delta_{x})\}_{x\in S}$ covers $S$, and by the compactness principle a finite subcollection of them covers $S$ as well.
    Thus, there exist points $x_i\in S$ and radii $\delta_i>0$, $i=1,2,\ldots,m,$ such that 
    $S\subseteq \bigcup_{i=1}^m B(x_i;\delta_{i})$.
    Define $\overline{\mu}:=\max\{\mu_i : ~~i=1,2,\ldots,m\}$.
    Then, for each $x\in S$, there exists some $\delta_x>0$ such that $B(x;\delta_x) \subseteq B(x_i;\delta_{i})$
    and \eqref{eq:localstrongconvex} holds on $\mathbb{B}(x;\delta_x)$ with the fixed constant $\overline{\mu}$. Thus, by virtue of Fact \ref{fac:StrongChar}~\ref{fac:StrongChar-2}, the function $h(x)-\tfrac{\overline{\mu}}{2}\|x\|^2$ is locally convex on the convex and compact set $S$, i.e., it is convex on $S$ by \cite[Corollary 2.4]{li2010some}. Therefore, $h$ is $\overline{\mu}$-strongly convex on $S$, due to Fact \ref{fac:StrongChar}~\ref{fac:StrongChar-1}, confirming the result.\qed
\end{proof}

It is worth noting that every $\C^1(\R^n)$ strongly convex function has a unique optimal solution. However, this property does not hold for functions that are only locally strongly convex. For instance, the function $x \mapsto e^x$ is locally strongly convex but does not possess a minimizer. The following proposition establishes that, for a locally strongly convex function, if an optimal solution exists, it must be unique.

\begin{prop}\label{pro:SingleStrong}
    Let $h\in \C^1(\R^n)$ be locally strongly convex function with $\X^*=\{y\in \R^n : h(y)=\inf_{x\in \R^n} h(x)\}$. Then, either $\X^*=\emptyset$ or $\X^*=\{x^*\}$ is singleton, i.e., $h$ has a unique optimal solution.
\end{prop}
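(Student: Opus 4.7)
The plan is to argue by contradiction: suppose $\X^{*}$ contains two distinct minimizers $x_1^{*}, x_2^{*}$ with $h(x_1^{*}) = h(x_2^{*}) = \inf_{x\in\R^n} h(x)$, and derive a contradiction using Theorem \ref{thm:localglobal-strong}. The key observation is that the closed line segment $S := \{tx_1^{*} + (1-t)x_2^{*} : t\in[0,1]\}$ is a convex and compact subset of $\R^{n}$, which is precisely the type of set on which local strong convexity can be promoted to (global) strong convexity.

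First, I would invoke Theorem \ref{thm:localglobal-strong} applied to $S$ to conclude that there exists some $\mu>0$ such that $h$ is $\mu$-strongly convex on $S$. By Fact \ref{fac:StrongChar}, this is equivalent to the gradient-type inequality
\[
    h(y) \geq h(x) + \innprod{\nabla h(x)}{y-x} + \tfrac{\mu}{2}\|y-x\|^{2}, \qquad \forall\, x,y \in S.
\]
Since $h$ is differentiable on $\R^{n}$ and $x_1^{*}$ is a global minimizer, we have $\nabla h(x_1^{*}) = 0$ by the standard first-order necessary condition.

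Plugging $x = x_1^{*}$ and $y = x_2^{*}$ into the inequality above (both points lie in $S$) yields
\[
    h(x_2^{*}) \geq h(x_1^{*}) + \tfrac{\mu}{2}\|x_2^{*}-x_1^{*}\|^{2}.
\]
Combining this with $h(x_1^{*}) = h(x_2^{*})$ gives $\tfrac{\mu}{2}\|x_2^{*}-x_1^{*}\|^{2} \leq 0$, which forces $x_1^{*} = x_2^{*}$, contradicting the assumption that they are distinct. Hence $\X^{*}$ cannot contain two distinct points, and the claim follows.

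There is no serious obstacle here; the statement reduces essentially to a one-line consequence of Theorem \ref{thm:localglobal-strong} together with Fermat's rule. The only point to be careful about is to apply Theorem \ref{thm:localglobal-strong} to a set that is both convex and compact and that contains the two candidate minimizers, which is why the segment $[x_1^{*}, x_2^{*}]$ is the natural choice rather than, say, an open neighborhood of a single minimizer.
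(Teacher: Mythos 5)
Your proof is correct and follows essentially the same route as the paper: both argue by contradiction, promote local strong convexity to $\mu$-strong convexity on a compact convex set containing the two candidate minimizers via Theorem~\ref{thm:localglobal-strong}, and then combine the strong convexity inequality with $\nabla h(x_1^*)=0$ to force the two points to coincide. The only (immaterial) difference is your choice of the segment $[x_1^*,x_2^*]$ where the paper uses the closed ball $\overline{B}(0;\max\{\|\ov x\|,\|\hat x\|\})$.
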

\begin{proof}
    Let us assume that $\X^*\neq \emptyset$.
    Suppose, for the sake of contradiction, that there exist two distinct minimizers $\overline{x}, \hat{x} \in \X^*$ with $\overline{x} \neq \hat{x}$. Define the compact and convex set $S:=\overline{B}(0;\delta)$ where $\delta:=\max\{\|\ov{x}\|, \|\hat{x}\|\}$. By virtue of Theorem \ref{thm:localglobal-strong}, there exists $\mu>0$ such that $h$ is $\mu$-strongly convex on $S$. Hence, it holds that
    $$h(\ov{x})-h(\hat{x})\ge \innprod{\nabla h(\hat{x})}{\ov{x}-\hat{x}}+\tfrac{\mu}{2}\|\ov{x}-\hat{x}\|^2=\tfrac{\mu}{2}\|\ov{x}-\hat{x}\|^2>0,$$
    making a contradiction. Therefore, the optimal solution must be unique.\qed
\end{proof}

Now, we consider functions satisfying both $L_S$-smoothnes and $\mu_S$-strong convexity on a nonempty convex set $S\subseteq\R^n$ with $\interior S\neq \emptyset$ and show an essential inequality in the next result, which is the extension of \cite[Theorem~2.1.12]{nesterov2018lectures}.

\begin{lem}\label{lem:LocalStrLipCont}
    Let $S\subseteq \R^n$ be a nonempty convex set with $\interior S\neq \emptyset$, and let $\func{h}{\R^n}{\R}$ with $h\in \C^1(S)$ be $L_S$-smoothness and $\mu_S$-strongly convex on $S$. Then, the following assertions hold:
    \begin{enumerate}
        \item\label{lem:LocalStrLipCont-1}
            One has $\mu_S\leq L_S$ and
            \begin{equation}\label{eq:lowBoundStrConvLipCon}
                \innprod{\nabla h(y)-\nabla h(x)}{y-x}\geq \tfrac{\mu_S L_S}{\mu_S +L_S} \|y-x\|^2 + \tfrac{1}{\mu_S +L_S} \|\nabla h(y)-\nabla h(x)\|^2, \quad \forall x, y\in S;
            \end{equation}
        \item\label{lem:LocalStrLipCont-2}
            If $\mu_S=L_S$, then
            $$\innprod{\nabla h(y)-\nabla h(x)}{y-x}=L_S\|y-x\|^2,\quad \quad \|\nabla h(y)-\nabla h(x)\|=L_S\|y-x\|,\quad\forall x,y\in S.$$
            Furthermore, there exist $a\in \R^n$ and $b\in \R$ such that $h(x)=\tfrac{L}{2}\|x\|^2+\innprod{a}{x}+b$ for any $x\in S$, i.e., $h$ is a quadratic function on $S$. If $0\in S$, then $a=\nabla h(0)$ and $b=h(0)$.
    \end{enumerate}
\end{lem}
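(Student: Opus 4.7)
The plan is to adapt Nesterov's classical proof \cite[Theorem~2.1.12]{nesterov2018lectures} to the present setting of a convex set $S$ with nonempty interior, via the auxiliary function $\phi(x) := h(x) - \tfrac{\mu_S}{2}\|x\|^2$ together with the cocoercivity characterization of Proposition~\ref{pro:SmoothChar}, which has already been upgraded to such non-open sets.

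For \ref{lem:LocalStrLipCont-1}, I would first derive $\mu_S \leq L_S$ by combining the strong convexity bound $\innprod{\nabla h(y)-\nabla h(x)}{y-x} \geq \mu_S\|y-x\|^2$ from Fact~\ref{fac:StrongChar}~\ref{fac:StrongChar-3} with the Cauchy--Schwarz consequence of $L_S$-smoothness, namely $\innprod{\nabla h(y)-\nabla h(x)}{y-x} \leq L_S\|y-x\|^2$, at any $y\neq x$ in $S$. Next, Fact~\ref{fac:StrongChar}~\ref{fac:StrongChar-2} yields convexity of $\phi$ on $S$, and its gradient $\nabla\phi(x)=\nabla h(x)-\mu_S x$ is clearly $(L_S-\mu_S)$-Lipschitz on $S$. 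When $L_S > \mu_S$, applying Proposition~\ref{pro:SmoothChar}~\ref{pro:SmoothChar-3} (cocoercivity) to $\phi$ gives
\[
    \innprod{\nabla\phi(y)-\nabla\phi(x)}{y-x} \geq \tfrac{1}{L_S-\mu_S}\|\nabla\phi(y)-\nabla\phi(x)\|^2,
\]
and substituting $\nabla\phi = \nabla h - \mu_S(\cdot)$, expanding the squared norm on the right, and regrouping produces the relation $(L_S+\mu_S)\innprod{\nabla h(y)-\nabla h(x)}{y-x} \geq \|\nabla h(y)-\nabla h(x)\|^2 + \mu_S L_S\|y-x\|^2$, which is exactly the target inequality \eqref{eq:lowBoundStrConvLipCon} after dividing by $L_S+\mu_S$. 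The borderline case $L_S=\mu_S$ is treated jointly with part \ref{lem:LocalStrLipCont-2}, since the quadratic structure established there makes \eqref{eq:lowBoundStrConvLipCon} hold with equality.

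For \ref{lem:LocalStrLipCont-2}, the condition $\mu_S = L_S$ forces $\phi$ to have $0$-Lipschitz gradient on $S$; hence $\nabla\phi$ is constant on $S$, and $\nabla h(y)-\nabla h(x)=L_S(y-x)$ for all $x,y\in S$. The two stated equalities follow by taking, respectively, inner product with $y-x$ and Euclidean norm. To recover the quadratic representation, I would fix any $x_0\in S$, set $a:=\nabla h(x_0)-L_S x_0$, and integrate $\nabla h$ along the segment $[x_0,x]\subseteq S$ (valid by convexity) to obtain $h(x)=\tfrac{L_S}{2}\|x\|^2+\innprod{a}{x}+b$ with $b:=h(x_0)-\tfrac{L_S}{2}\|x_0\|^2-\innprod{a}{x_0}$; specialising $x_0=0$ when $0\in S$ gives $a=\nabla h(0)$ and $b=h(0)$. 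The main delicate point is the cocoercivity step, since $S$ is only assumed convex with nonempty interior and may contain boundary points: it is precisely here that one needs the extension of the classical smoothness--cocoercivity equivalence to arbitrary convex sets, which Proposition~\ref{pro:SmoothChar} already supplies, so no further approximation argument is necessary.
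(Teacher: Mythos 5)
Your proposal is correct and follows essentially the same route as the paper: both prove $\mu_S\le L_S$ from the two-sided bound $\mu_S\|y-x\|^2\le\innprod{\nabla h(y)-\nabla h(x)}{y-x}\le L_S\|y-x\|^2$, pass to $\varphi=h-\tfrac{\mu_S}{2}\|\cdot\|^2$, apply the cocoercivity characterization of Proposition~\ref{pro:SmoothChar} (already extended to convex sets with nonempty interior) to $\varphi$ with constant $L_S-\mu_S$, and expand to obtain \eqref{eq:lowBoundStrConvLipCon}, treating $\mu_S=L_S$ separately via the resulting affine gradient and quadratic form. The only cosmetic difference is that the $(L_S-\mu_S)$-Lipschitzness of $\nabla\varphi$ is not \lq\lq clear\rq\rq\ by direct computation but follows from the inner-product bound via the equivalence in Proposition~\ref{pro:SmoothChar}, which is exactly how the paper justifies it.
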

\begin{proof}
    \ref{lem:LocalStrLipCont-1} Let us define the function $\func{\varphi}{S}{\R}$ given by 
    \begin{align*}
        \varphi(x)=h(x)-\tfrac{\mu_S}{2} \|x\|^2.
    \end{align*}
    It follows from Proposition~\ref{pro:SmoothChar}~\ref{pro:SmoothChar-4} together with Fact \ref{fac:StrongChar}~\ref{fac:StrongChar-3} that
    \begin{align}\label{eq:lowandupBound}
        \mu_S \|y-x\|^2\leq \innprod{\nabla h(y)-\nabla h(x)}{y-x}\leq L_S \|y-x\|^2, \quad \forall x, y\in S,
    \end{align}
    i.e.,
    \begin{align*}
        0\leq \innprod{\nabla \varphi(y)-\nabla \varphi(x)}{y-x}\leq (L_S-\mu_S) \|y-x\|^2, \quad \forall x, y\in S.
    \end{align*}
    This implies that $\mu_S \le L_S$ and that $\varphi \in \mathcal{C}_{L_S-\mu_S}^{1,1}(S)$ by Proposition~\ref{pro:SmoothChar}~\ref{pro:SmoothChar-1}. If $\mu_S = L_S$, the result is immediate. Otherwise, Proposition~\ref{pro:SmoothChar} guarantees that
    \begin{align*}
        \innprod{\nabla \varphi(y)-\nabla \varphi(x)}{y-x}\geq \tfrac{1}{L_S-\mu_S} \|\nabla \varphi(y)-\nabla \varphi(x)\|^2, \quad \forall x, y\in S,
    \end{align*}
    which coincides precisely with \eqref{eq:lowBoundStrConvLipCon}.
    
    \ref{lem:LocalStrLipCont-2} Let us define $L:=\mu_S=L_S$. It follows from \eqref{eq:lowandupBound}, Cauchy-Schwarz inequality, and $L_S$-smoothness of $f$ that
    $$L_S \|y-x\|^2=\innprod{\nabla h(y)-\nabla h(x)}{y-x}= \|\nabla h(y)-\nabla h(x)\|\|y-x\| \leq L_S\|y-x\|^2, \quad \forall x, y\in S,$$
    giving us desired equalities. On the other hand, the function $h(\cdot) - \tfrac{L}{2}\|\cdot\|^2$ is both convex and concave on $S$, according to Proposition \ref{pro:SmoothChar}~\ref{pro:SmoothChar-2} and Fact \ref{fac:StrongChar}~\ref{fac:StrongChar-2}. Thus, $h(\cdot) - \tfrac{L}{2}\|\cdot\|^2$ is a linear function on $S$, i.e., there exists $a\in \R^n$ and $b\in \R$ such that
    $$h(x)-\tfrac{L}{2}\|x\|^2=\innprod{a}{x}+b,\quad\forall x\in S.$$
    It is evident that if $0\in S$, $h(0) = b$, and $\nabla h(0) = a$.
    \qed
\end{proof}

The following corollary, as a direct consequence of Lemma \ref{lem:LocalStrLipCont}, extends the results from global $L$-smoothness and strong convexity to their local counterparts.

\begin{cor}\label{cor:LocalStrLipCont}
    Let $S\subseteq \R^n$ be a nonempty convex and compact set with $\interior S\neq \emptyset$, and let $\func{h}{\R^n}{\R}$ be a $h\in \C^1(\R^n)$ locally strongly convex function with locally Lipschitz gradient. Then, the following assertions hold:
    \begin{enumerate}
        \item\label{cor:LocalStrLipCont-1}
            One has $\mu_S\leq L_S$ and
            \begin{equation*}
                \innprod{\nabla h(y)-\nabla h(x)}{y-x}\geq \tfrac{\mu_S L_S}{\mu_S +L_S} \|y-x\|^2 + \tfrac{1}{\mu_S +L_S} \|\nabla h(y)-\nabla h(x)\|^2, \quad \forall x, y\in S;
            \end{equation*}
        \item\label{cor:LocalStrLipCont-2}
            If $\mu_S=L_S$, then
            $$\innprod{\nabla h(y)-\nabla h(x)}{y-x}=L_S\|y-x\|^2,\quad \quad \|\nabla h(y)-\nabla h(x)\|=L_S\|y-x\|,\quad\forall x,y\in S.$$
            Furthermore, there exist $a\in \R^n$ and $b\in \R$ such that $h(x)=\tfrac{L}{2}\|x\|^2+\innprod{a}{x}+b$ for any $x\in S$, i.e., $h$ is a quadratic function on $S$. If $0\in S$, then $a=\nabla h(0)$ and $b=h(0)$.
    \end{enumerate}
\end{cor}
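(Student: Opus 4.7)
The plan is to reduce everything to Lemma~\ref{lem:LocalStrLipCont} by showing that, under the stated hypotheses, $h$ is genuinely $L_S$-smooth and $\mu_S$-strongly convex on the convex compact set $S$ with $0<\mu_S\le L_S<\infty$. Once this is established, both Assertions \ref{cor:LocalStrLipCont-1} and \ref{cor:LocalStrLipCont-2} follow verbatim from the corresponding assertions of the lemma.

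First I would handle the strong convexity side. Since $h$ is locally strongly convex on $\R^n$ and $S$ is convex and compact, Theorem~\ref{thm:localglobal-strong} yields a constant $\mu>0$ such that $h$ is $\mu$-strongly convex on $S$. In particular $\mu_S\ge\mu>0$ is well defined, and by Fact~\ref{fac:StrongChar}~\ref{fac:StrongChar-3} the strong convexity characterization holds on $S$ with constant $\mu_S$.

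Next I would show that the locally Lipschitz gradient of $h$ is in fact globally Lipschitz on $S$, so that $L_S<\infty$. For each $x\in S$ there exist $\delta_x>0$ and $L_x>0$ with $\nabla h$ being $L_x$-Lipschitz on $B(x;\delta_x)$. Compactness of $S$ extracts a finite subcover $S\subseteq\bigcup_{i=1}^m B(x_i;\delta_{x_i}/2)$; set $\bar L:=\max_i L_{x_i}$ and $\bar\delta:=\min_i \delta_{x_i}/2>0$. For $y,z\in S$ with $\|y-z\|<\bar\delta$ both points lie in a common ball $B(x_i;\delta_{x_i})$, giving $\|\nabla h(y)-\nabla h(z)\|\le\bar L\|y-z\|$. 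For $y,z\in S$ with $\|y-z\|\ge\bar\delta$ I would invoke convexity of $S$: the segment $[y,z]\subseteq S$, so partitioning it into $N=\lceil\|y-z\|/\bar\delta\rceil$ subintervals of length $<\bar\delta$ and applying the short-range estimate to consecutive endpoints $y=p_0,p_1,\ldots,p_N=z$ and summing yields
\[
\|\nabla h(y)-\nabla h(z)\|\le\sum_{j=0}^{N-1}\|\nabla h(p_j)-\nabla h(p_{j+1})\|\le\bar L\sum_{j=0}^{N-1}\|p_j-p_{j+1}\|=\bar L\|y-z\|.
\]
Hence $L_S\le\bar L<\infty$ and $h\in\C^{1,1}_{L_S}(S)$.

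With $h$ both $L_S$-smooth and $\mu_S$-strongly convex on the convex set $S$ (which has nonempty interior by assumption), Lemma~\ref{lem:LocalStrLipCont}~\ref{lem:LocalStrLipCont-1} gives $\mu_S\le L_S$ together with the cocoercive-type inequality in Assertion~\ref{cor:LocalStrLipCont-1}, and Lemma~\ref{lem:LocalStrLipCont}~\ref{lem:LocalStrLipCont-2} delivers the equalities and the quadratic representation in Assertion~\ref{cor:LocalStrLipCont-2}, including the identification $a=\nabla h(0)$, $b=h(0)$ when $0\in S$. The only nontrivial step here is the chain/Lebesgue-number argument used to pass from local to global Lipschitz continuity on $S$; everything else is a direct invocation of previously established results.\qed
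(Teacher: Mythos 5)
Your proposal is correct and matches the paper's intent: the paper states this corollary as a direct consequence of Lemma~\ref{lem:LocalStrLipCont} without writing out the reduction, and your argument supplies exactly the missing pieces, namely Theorem~\ref{thm:localglobal-strong} for $\mu_S>0$ and a Lebesgue-number/chaining argument along the segment $[y,z]\subseteq S$ for $L_S<\infty$, before invoking the lemma verbatim. The only cosmetic point is that with $N=\lceil\|y-z\|/\bar\delta\rceil$ the subintervals have length $\le\bar\delta$ rather than $<\bar\delta$, which is harmless since the short-range estimate extends to $\|p_j-p_{j+1}\|\le\bar\delta$ by the choice $\bar\delta=\min_i\delta_{x_i}/2$.
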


\subsubsection{\textbf{Kurdyka-{\L}ojasiewicz inequality}} \label{sec:LojaIneq}

In this section, we introduce Kurdyka-{\L}ojasiewicz inequality which plays a key role in studying the convergence rate and improving the rate obtained for the sequence generated by our gradient methods.

\begin{defin}[\textbf{Kurdyka-{\L}ojasiewicz inequality}] \label{def:LojaIneqDef}
    Let $\func{h}{\R^n}{\R}$ be a continuously differentiable and convex function with $h^*=\inf_{x\in \R^n} h(x)$ and let the set of minimizers $\X^*=\set{x\in \R^n~|~ h(x)=h^*}$ be nonempty.
    The function $h$ is said to satisfy the Kurdyka-{\L}ojasiewicz (KL) inequality if for any optimal point $x^*\in \X^*$, there exist constants $\rho, \delta>0$ and $\vartheta\in (0,1)$ such that
    \begin{equation}\label{eq:KLIneq}
        (h(x)-h^*)^\vartheta\leq \rho \|\nabla h(x)\|, \quad \forall x\in B(x^*;\delta).
    \end{equation}
    Additionally, the function $h$ is said to satisfy the KL inequality on a nonempty set $S\subseteq \R^n$ with constants $\vartheta_\Omega\in (0,1)$ and $\rho_\Omega>0$, if the inequality \eqref{eq:KLIneq} with $\vartheta=\vartheta_S$ and $\rho=\rho_S$ holds for all $x\in S$.
\end{defin}

Let us emphasize that the inequalities \eqref{eq:KLIneq} is satisfied for a broad class of functions appearing in practice. For instance, in 1963, Stanislaw {\L}ojasiewicz proved that these inequalities hold for analytic functions \cite{lojasiewicz1963propriete,lojasiewicz1993geometrie}. In addition, in 1998, Kurdyka showed that the KL inequality \eqref{eq:KLIneq} is satisfied for differentiable functions definable in an $o$-minimal structure; see \cite{kurdyka1998gradients}.

The following theorem establishes that the KL inequality \eqref{eq:KLIneq} holds uniformly on any compact set with suitable KL parameters $\vartheta$ and $\rho$ provided that the convex function satisfies the KL gradient inequality.

\begin{thm}\label{thm:KL}
   Let $\func{h}{\R^n}{\R}$ be a continuously differentiable and convex function. If the function $h$ satisfies KL inequality, then the following assertions hold:
   \begin{enumerate}
       \item\label{thm:KL-1}
            For any $\ov x\in \R^n$, there exist constants $\rho, \delta>0$ and $\vartheta\in (0,1)$ such that
            \[
            (h(x)-h^*)^\vartheta\leq \rho \|\nabla h(x)\|, \quad \forall x\in B(\ov x;\delta).\]
       \item\label{thm:KL-2}
            For any compact set $S\subseteq \R^n$, the function $h$ satisfies the KL inequality on $S$.
   \end{enumerate}
\end{thm}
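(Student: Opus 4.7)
The plan is to reduce part (b) to part (a) via a finite covering argument, so I would first prove (a) by splitting on whether $\bar x$ lies in $\X^*$ or not.

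For (a), if $\bar x \in \X^*$, the claim is precisely the hypothesis of the KL inequality, so there is nothing to show. If $\bar x \notin \X^*$, then convexity of $h$ combined with the first-order optimality characterization (Fact~\ref{fac:optimality}) implies $\nabla h(\bar x) \neq 0$. By continuity of $\nabla h$, there exists $\delta > 0$ such that $\|\nabla h(x)\| \geq \tfrac{1}{2}\|\nabla h(\bar x)\|$ for every $x \in B(\bar x;\delta)$, and by continuity of $h$ the quantity $M := \sup_{x \in \overline B(\bar x;\delta)} (h(x)-h^*)$ is finite. Picking any $\vartheta \in (0,1)$, for instance $\vartheta = \tfrac{1}{2}$, it then suffices to set $\rho := \tfrac{2 M^{\vartheta}}{\|\nabla h(\bar x)\|}$, which yields $(h(x)-h^*)^{\vartheta} \leq M^{\vartheta} \leq \rho \|\nabla h(x)\|$ on $B(\bar x;\delta)$.

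For (b), I would apply (a) at each $\bar x \in S$ to obtain a radius $\delta_{\bar x} > 0$ and parameters $\rho_{\bar x} > 0$, $\vartheta_{\bar x} \in (0,1)$ such that the KL inequality holds on $B(\bar x;\delta_{\bar x})$. Since $S$ is compact, a finite subcover $\{B(x_i;\delta_i)\}_{i=1}^m$ of $S$ exists with corresponding constants $\rho_i, \vartheta_i$. Define the common exponent $\vartheta := \max_{1 \le i \le m} \vartheta_i \in (0,1)$. To unify the constants, set $M := \max_{x \in S} (h(x)-h^*) < \infty$, which is finite by continuity of $h$ on the compact set $S$. Then, for any $x \in S$, pick some $i$ with $x \in B(x_i;\delta_i)$ and write
\[
    (h(x)-h^*)^{\vartheta} = (h(x)-h^*)^{\vartheta - \vartheta_i}(h(x)-h^*)^{\vartheta_i} \le \max\{1,M\}^{\vartheta-\vartheta_i}\,\rho_i\,\|\nabla h(x)\|,
\]
where we use the hypothesis on $B(x_i;\delta_i)$ together with the fact that $M^{\vartheta-\vartheta_i} \le \max\{1,M\}^{\vartheta-\vartheta_i}$ controls the extra factor regardless of whether $M\le 1$ or $M>1$. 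Taking $\rho_S := \max_{1\le i\le m} \max\{1,M\}^{\vartheta-\vartheta_i}\rho_i$ and $\vartheta_S := \vartheta$ gives the desired uniform KL inequality on $S$.

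The main subtlety I anticipate is the exponent unification in (b), since different neighborhoods may come with different KL exponents $\vartheta_i$; without upgrading them to a common $\vartheta$, one cannot meaningfully take a maximum of the constants. The trick is to enlarge all exponents to the same value $\vartheta := \max_i \vartheta_i$ and absorb the resulting factor $(h(x)-h^*)^{\vartheta-\vartheta_i}$ into an enlarged constant, using that $h(x) - h^*$ is bounded uniformly on the compact set $S$. The case split $M \le 1$ versus $M > 1$ is harmless because in either case $\max\{1,M\}^{\vartheta-\vartheta_i}$ is a finite deterministic bound.
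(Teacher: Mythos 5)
Your proposal is correct and follows essentially the same route as the paper: part (a) is proved by bounding $(h(x)-h^*)^{\vartheta}$ above and $\|\nabla h(x)\|$ below on a small closed ball around a non-critical point (the paper uses $\max_{z}\|\nabla h(z)\|^{-1}$ where you use $\tfrac{2}{\|\nabla h(\bar x)\|}$, a cosmetic difference), and part (b) uses the same finite-subcover argument with the same exponent-unification trick, taking $\vartheta_S:=\max_i\vartheta_i$ and absorbing the factor $(h(x)-h^*)^{\vartheta_S-\vartheta_i}$ into an enlarged constant via a uniform bound on $h-h^*$ over the compact set. No gaps; the subtlety you flag about unifying the exponents is exactly the point the paper's proof also hinges on.
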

\begin{proof}
    \ref{thm:KL-1} Let $\ov x\in \R^n$. If $\|\nabla h(\ov x)\|=0$, then $\ov x\in \X^*$, and the claim follows trivially. Assume now that $\|\nabla h(\ov x)\|>0$. Then, there exists a constant $\delta>0$ such that $\|\nabla h(x)\|>0$ for all $x\in \ov B(\ov x;\delta)$. Then, for any $\vartheta \in (0,1)$ we obtain
    \[(h(x)-h^*)^\vartheta\leq \max_{z\in \ov B(\ov x,\delta)} (h(z)-h^*)^\vartheta \le \Big(\max_{z\in \ov B(\ov x,\delta)} (h(z)-h^*)^\vartheta  \max_{z\in \ov B(\ov x,\delta)} \|\nabla h(z)\|^{-1}\Big) \|\nabla h(x)\|, \quad \forall x\in B(\ov x,\delta).\]
    \ref{thm:KL-2} By Assertion \ref{thm:KL-1}, for any $\ov x\in S$, there exist constants $\rho_{\ov x}, \delta_{\ov x}>0$ and $\vartheta_{\ov x} \in (0,1)$ such that
    \((h(x)-h^*)^{\vartheta_{\ov x}}\leq \rho_{\ov x} \|\nabla h(x)\|\) for any \(x\in B(\ov x;\delta_{\ov x}).\)
    Since $S$ is a compact set, there exist $m\in \N$ and points $x_1, x_2, \ldots, x_m\in S$ with corresponding radii $\delta_1,\delta_2,\ldots, \delta_m>0$ such that $S\subseteq \bigcup_{i=1}^m B(x_i;\delta_i)$. Let $\rho_i > 0$ and $\vartheta_i \in (0,1)$ be the KL parameters corresponding to $x_i$, $i = 1, \dots, m$. Define
    $\vartheta_S:=\max_{i}\vartheta_i$ and $\rho_S:= \max_{i}\kappa_i\rho_i$ where $\kappa_i := max_{z\in \ov B(x_i;\delta_i)} (h(z)-h^*)^{\vartheta_S-\vartheta_i}$. Let us consider $x\in S$ arbitrary. Then, there exists an index $i$ such that $x\in \ov B(x_i;\delta_i)$ and
    \[(h(x)-h^*)^{\vartheta_S} = (h(x)-h^*)^{\vartheta_S - \vartheta_i} (h(x)-h^*)^{\vartheta_i} \le \kappa_i \rho_i \|\nabla h(x)\|\le \rho_S \|\nabla h(x)\|,\]
    which establishes the claim.\qed
\end{proof}

\subsection{{\bf SGA (Scaled gradient algorithm)}}
In this section, we investigate the scaled gradient descent method. Let us begin with the following assumption, which plays a key role in obtaining convergence results. We recall that, throughout the paper, we consider problem~\eqref{eq:p} satisfying Assumption~\ref{assumptionSG}.

\begin{ass}\label{ass:LevelBound}
    The lower level set $\Lambda(x^0):=\left\{x \in \mathbb{R}^{m} \mid f(x) \leq f(x^{0})\right\}$ is bounded with $x^{0}\in \R^n$.
\end{ass}

\begin{rem}
    Let Assumption \ref{ass:LevelBound} hold.
    Let us define the compact and convex set $\Omega:=\overline{B}(0;\ov r)$ where
    $$\ov r:=(1+2(1+\nu)^{\nicefrac{1}{\nu}})r\quad\quad\text{and}\quad\quad r:=\max\|x\| ~~~\text{s.t.}~~x\in \Lambda(x^0).$$
    It is evident that $\X^*\subseteq\Lambda(x^0)\subseteq \overline{B}(0;r)\subseteq \Omega$. On account of Assumptions \ref{assumptionSG}~\ref{assumptionSG-1} and \ref{ass:LevelBound}, $\Lambda(x^0)$ is a compact set, ensuring the well-definedness of $r\ge 0$. Furthermore, since $\nabla f$ is locally $\nu$-H\"{o}lder, it is automatically $\nu$-H\"{o}lder continuous on the compact set $\Omega$ with constant $\Lo$, i.e., the function $f$ is $(\nu,\Lo)$-H\"{o}lder smooth on $\Omega$.
    \qed
\end{rem}

Next, we analyze the global convergence, convergence rate, and complexity of the gradient descent method.

\RestyleAlgo{boxruled}
\begin{algorithm}[ht!]
\DontPrintSemicolon
\KwIn{$x^0\in\R^ n$,~$\Lo>0$,~ $0<\nu\le 1$,~ $k=0$;}
\Begin{ 
    \While{
        the stopping criteria does not hold 
    }{
    Select a step-size $\alpha_k\in \big(0,\big(\tfrac{1+\nu}{\Lo}\big)^{\nicefrac{1}{\nu}}\big]$;\;
    Set $x^{k+1}=x^k-\alpha_k \|\nabla f(x^k)\|^{\tfrac{1-\nu}{\nu}}\nabla f(x^k)$ and $k=k+1$;
    }
    $x^{\mathrm{best}}=x^k$;\;
}
\KwOut{$x^{\mathrm{best}}$} 
\caption{SGA: Scaled Gradient Algorithm \label{alg:SGA}}
\end{algorithm}

It is worth highlighting three key distinctions between Algorithm~\ref{alg:SGA} and the standard gradient descent method, which typically assumes a globally Lipschitz continuous gradient:

\begin{enumerate}
    \item[1.]
    In Algorithm~\ref{alg:SGA}, the function $f$ is assumed to have a locally $\nu$-H\"{o}lder gradient, rather than a globally Lipschitz continuous one. This generalization allows the algorithm to handle a broader class of convex functions.
    \item[2.]
    When the $\nu$-H\"{o}lder constant \(\Lo\) is known, one can establish the global convergence of the iterates to an optimal solution by selecting an appropriate step-size within a prescribed interval. Moreover, linear convergence follows under additional conditions such as the KL inequality or local strong convexity. In contrast, when \(\Lo\) is unknown, choosing a step-size that diminishes to zero, such as diminishing, non-summable, square-summable but non-summable, or geometrically decaying step-sizes, ensures that it eventually falls within the desired interval, thereby guaranteeing (linear) convergence. Notice that in implementation, the algorithm is run for a limited iterations making a lower bound for step-sizes.
    \item[3.]
    If the function is \((\nu,L)\)-H\"{o}lder smooth, then \(\Lo\le L\). Consequently, a slightly larger admissible interval for the step-size can be considered, which may improve the algorithm’s performance. Moreover, if the function is \(\Lo\)-smooth on $\Omega$, then it is \((\nu,\tilde{L})\)-H\"{o}lder smooth on $\Omega$ for any $\nu\in (0,1]$ with a constant \(L_\nu\ge \Lo\). This observation provides additional flexibility in choosing the step-size and potentially enhances the convergence behavior.
\end{enumerate}

\subsubsection{\textbf{Global Convergence}}

\begin{thm}[\textbf{Sufficient decrease}]
\label{thm:SufDecSGA}
    Let Assumption \ref{ass:LevelBound} hold. If the sequence $\seq{x^k}$ is generated by Algorithm \ref{alg:SGA}, the following assertions hold:
    \begin{enumerate}
        \item\label{thm:SufDecSGA-1} $\seq{x^k}\subseteq \Lambda(x^0)$,
        \begin{equation}\label{eq:squarefk1fk-1}
        f(x^{k+1}) \leq f(x^k) - \left(\alpha_k-\tfrac{\Lo}{1+\nu}\alpha_k^{1+\nu}\right) \|\nabla f(x^k)\|^{\tfrac{1+\nu}{\nu}},\quad \forall k\in\N_0,
        \end{equation}
        and $f(x^{k+1})\leq f(x^k)$ for any $k\in \N_0$;
        \item\label{thm:SufDecSGA-2} Let $\alpha_k\in \big(0,\tfrac{4\nu}{(1+\nu)\Lo^{\nicefrac{1}{\nu}}}\big]$. For any $x^*\in \X^*$, one has
        \begin{equation}\label{eq:squareXk1Xk-1}
        \|x^{k+1}-x^*\|^2 \leq \|x^k-x^*\|^2-\left(\tfrac{4\nu\alpha_k}{(1+\nu){\Lo}^{\nicefrac{1}{\nu}}}-\alpha_k^2\right) \|\nabla f(x^k)\|^{\tfrac{2}{\nu}},\quad \forall k\in\N_0,
        \end{equation}
        and $\|x^{k+1}-x^*\| \leq \|x^k-x^*\|$ for any $k\in \N_0$;
        \item\label{thm:SufDecSGA-3} In Assertions \ref{thm:SufDecSGA-1} and \ref{thm:SufDecSGA-2}, the maximum decrease of the inequalities \eqref{eq:squarefk1fk-1} and \eqref{eq:squareXk1Xk-1} are respectively given for $\alpha_k=\tfrac{1}{\Lo^{\nicefrac{1}{\nu}}}$ and $\alpha_k=\tfrac{2\nu}{(1+\nu)\Lo^{\nicefrac{1}{\nu}}}$.
    \end{enumerate}
\end{thm}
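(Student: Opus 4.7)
The plan is to use three ingredients: the descent lemma of Proposition~\ref{pro:HSmoothChar}~\ref{pro:HSmoothChar-2} for part~\ref{thm:SufDecSGA-1}, the cocoercivity-type estimate of Proposition~\ref{pro:HSmoothChar}~\ref{pro:HSmoothChar-4} for part~\ref{thm:SufDecSGA-2}, and elementary scalar calculus for part~\ref{thm:SufDecSGA-3}. The recurring bookkeeping task is to certify that every iterate and every auxiliary point required by Proposition~\ref{pro:HSmoothChar} lies in the convex compact set $\Omega$ on which $f$ is $(\nu,\Lo)$-H\"older smooth; the enlarged radius $\ov r=(1+2(1+\nu)^{\nicefrac{1}{\nu}})r$ defined in the preceding remark is tuned precisely for this purpose.

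For part~\ref{thm:SufDecSGA-1}, I proceed by induction on $k$. Assuming $x^k\in\Lambda(x^0)\subseteq\ov B(0;r)$, I use $\nabla f(x^*)=0$ (Fact~\ref{fac:optimality}) together with $\nu$-H\"older continuity of $\nabla f$ on $\Omega$ to obtain $\|\nabla f(x^k)\|\le\Lo(2r)^{\nu}$. Combined with the SGA step-size cap $\alpha_k\le(\tfrac{1+\nu}{\Lo})^{\nicefrac{1}{\nu}}$, this gives $\|x^{k+1}-x^k\|=\alpha_k\|\nabla f(x^k)\|^{1/\nu}\le 2(1+\nu)^{\nicefrac{1}{\nu}}r$, so $\|x^{k+1}\|\le\ov r$ and $x^{k+1}\in\Omega$. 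Applying Proposition~\ref{pro:HSmoothChar}~\ref{pro:HSmoothChar-2} at $(x^k,x^{k+1})$ and substituting the SGA update $x^{k+1}-x^k=-\alpha_k\|\nabla f(x^k)\|^{(1-\nu)/\nu}\nabla f(x^k)$ into both $\innprod{\nabla f(x^k)}{x^{k+1}-x^k}$ and $\|x^{k+1}-x^k\|^{1+\nu}$ then yields~\eqref{eq:squarefk1fk-1} after routine algebra; the step-size bound forces the bracket to be non-negative, so $f(x^{k+1})\le f(x^k)$ and $x^{k+1}\in\Lambda(x^0)$, completing the induction.

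For part~\ref{thm:SufDecSGA-2}, I expand $\|x^{k+1}-x^*\|^2$ using the update rule and $\nabla f(x^*)=0$, which rewrites the cross term as $-2\alpha_k\|\nabla f(x^k)\|^{(1-\nu)/\nu}\innprod{\nabla f(x^k)-\nabla f(x^*)}{x^k-x^*}$. I lower bound the inner product by $\tfrac{2\nu}{(1+\nu)\Lo^{\nicefrac{1}{\nu}}}\|\nabla f(x^k)\|^{(1+\nu)/\nu}$ via Proposition~\ref{pro:HSmoothChar}~\ref{pro:HSmoothChar-4} at $(x^*,x^k)$; the preconditions demand that the two auxiliary points $x^k-\Lo^{-\nicefrac{1}{\nu}}\|\nabla f(x^k)\|^{(1-\nu)/\nu}\nabla f(x^k)$ and $x^*+\Lo^{-\nicefrac{1}{\nu}}\|\nabla f(x^k)\|^{(1-\nu)/\nu}\nabla f(x^k)$ lie in $\Omega$, which follows by the same H\"older-plus-triangle estimate as in part~\ref{thm:SufDecSGA-1} with a worst-case bound of $3r\le\ov r$. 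Using $(1-\nu)/\nu+(1+\nu)/\nu=2/\nu$ and collecting terms produces~\eqref{eq:squareXk1Xk-1}, with the cap $\alpha_k\le\tfrac{4\nu}{(1+\nu)\Lo^{\nicefrac{1}{\nu}}}$ ensuring non-negativity of the bracket.

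Finally, part~\ref{thm:SufDecSGA-3} is immediate: the maps $\alpha\mapsto\alpha-\tfrac{\Lo}{1+\nu}\alpha^{1+\nu}$ and $\alpha\mapsto\tfrac{4\nu\alpha}{(1+\nu)\Lo^{\nicefrac{1}{\nu}}}-\alpha^2$ are strictly concave on the admissible intervals, and zeroing their derivatives yields the maximizers $\Lo^{-\nicefrac{1}{\nu}}$ and $\tfrac{2\nu}{(1+\nu)\Lo^{\nicefrac{1}{\nu}}}$, respectively, both of which sit inside the prescribed ranges. The genuine technical obstacle across the whole proof is not the algebra but the containment bookkeeping that feeds the H\"older-smoothness tools of Proposition~\ref{pro:HSmoothChar}; the definition of $\ov r$ in the preceding remark and the two step-size caps are calibrated precisely so that a full SGA step and the additional internal offsets required by Proposition~\ref{pro:HSmoothChar}~\ref{pro:HSmoothChar-4} never leave $\Omega$.
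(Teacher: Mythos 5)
Your proposal is correct and follows essentially the same route as the paper: induction plus the Hölder descent estimate of Proposition~\ref{pro:HSmoothChar}~\ref{pro:HSmoothChar-2} for part~\ref{thm:SufDecSGA-1}, the cocoercivity-type bound of Proposition~\ref{pro:HSmoothChar}~\ref{pro:HSmoothChar-4} (with the same containment checks for the auxiliary points, bounded by $3r<\ov r$) for part~\ref{thm:SufDecSGA-2}, and concavity for part~\ref{thm:SufDecSGA-3}. No gaps.
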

\begin{proof}
    \ref{thm:SufDecSGA-1}  We proceed by induction to verify $\seq{x^k}\subseteq \Lambda(x^0)$. For $k=0$, we note that $x^0\in \Lambda(x^0)$. Assume that $x^k\in \Lambda(x^0)$ for some $k\in \N_0$.
    First, we show $x^k\in \Omega$.
    Based on $(\nu,\Lo)$-H\"{o}lder smoothness of $f$ on $\Omega$, we deduce
    $$\|\nabla f(x^k)\| = \|\nabla f(x^k)-\nabla f(x^*)\| \leq \Lo \|x^k-x^*\|^{\nu} \leq \Lo (\|x^k\| +\|x^*\|)^{\nu} \leq (2r)^{\nu}\Lo,$$
    where $x^*\in \X^*$ is arbitrary. Applying this inequality and $\alpha_k \le \big(\tfrac{1+\nu}{\Lo}\big)^{\nicefrac{1}{\nu}}$, we obtain
    $$\|x^{k+1}\| = \Big\|x^k-\alpha_k\|\nabla f(x^k)\|^{\tfrac{1-\nu}{\nu}}\nabla f(x^k)\Big\| \leq \|x^k\| + \alpha_k\|\nabla f(x^k)\|^{\tfrac{1-\nu}{\nu}}\|\nabla f(x^k)\| < r + \big(\tfrac{1+\nu}{\Lo}\big)^{\nicefrac{1}{\nu}} 2r\Lo^{\nicefrac{1}{\nu}} = \ov r,$$
    i.e., $x^{k+1}\in \Omega$.
    Now, By Proposition \ref{pro:HSmoothChar}~\ref{pro:HSmoothChar-2}, we observe that
        \begin{align*}
            f(x^{k+1})&\leq f(x^{k}) + \innprod{\nabla f(x^k)}{x^{k+1}-x^k} + \tfrac{\Lo}{1+\nu}\|x^{k+1}-x^k\|^{1+\nu}\\
            &= f(x^{k}) 
            -\alpha_k \|\nabla f(x^k)\|^{\tfrac{1+\nu}{\nu}} + \tfrac{\Lo}{1+\nu}\alpha_k^{1+\nu} \|\nabla f(x^k)\|^{\tfrac{1+\nu}{\nu}}\\
            &= f(x^k) - \left(\alpha_k-\tfrac{\Lo}{1+\nu}\alpha_k^{1+\nu}\right) \|\nabla f(x^k)\|^{\tfrac{1+\nu}{\nu}}
            \leq f(x^k)\leq f(x^0),
        \end{align*}
    ensuring $x^{k+1}\in \Lambda(x^0)$, inequality \eqref{eq:squarefk1fk-1}, and $f(x^{k+1})\le f(x^{k})$.\\
    \ref{thm:SufDecSGA-2} Since $\alpha_k \leq \tfrac{4\nu}{(1+\nu)\Lo^{\nicefrac{1}{\nu}}}\le \big(\tfrac{1+\nu}{\Lo}\big)^{\nicefrac{1}{\nu}}$, $\seq{x^k}\subseteq \Lambda(x^0)$ by Assertion \ref{thm:SufDecSGA-1}. Let $x^*\in\X^*$. It holds that
    $$\Big\|x^k - \tfrac{1}{\Lo^{\nicefrac{1}{\nu}}}\|\nabla f(x^k)\|^{\tfrac{1-\nu}{\nu}}\nabla f(x^k)\Big\| \le \|x^k\| + \tfrac{1}{\Lo^{\nicefrac{1}{\nu}}} 2r\Lo^{\nicefrac{1}{\nu}} \le 3r< \ov r,$$
    implying $x^k - \tfrac{1}{\Lo^{\nicefrac{1}{\nu}}}\|\nabla f(x^k)\|^{\tfrac{1-\nu}{\nu}}\nabla f(x^k)\in \Omega$. Similarly, one can show that $x^* + \tfrac{1}{\Lo^{\nicefrac{1}{\nu}}}\|\nabla f(x^k)\|^{\tfrac{1-\nu}{\nu}}\nabla f(x^k)\in \Omega$. 
    Thus, it follows from Proposition \ref{pro:HSmoothChar}~\ref{pro:HSmoothChar-4} that
    \[
        \innprod{\nabla f(x^k)}{x^k-x^*} =\innprod{\nabla f(x^k)-\nabla f(x^*)}{x^k-x^*}\geq \tfrac{2\nu}{(1+\nu){\Lo}^{\nicefrac{1}{\nu}}} \|\nabla f(x^k)-\nabla f(x^*)\|^{\tfrac{1+\nu}{\nu}}=\tfrac{2\nu}{(1+\nu){\Lo}^{\nicefrac{1}{\nu}}} \|\nabla f(x^k)\|^{\tfrac{1+\nu}{\nu}}.
    \]
    Using this inequality, we deduce
    \begin{align*}
        \|x^{k+1}-x^*\|^2 &= \Big\|x^k-x^* -\alpha_k\|\nabla f(x^k)\|^{\tfrac{1-\nu}{\nu}}\nabla f(x^k)\Big\|^2\\
        &= \|x^k-x^*\|^2-2\alpha_k\|\nabla f(x^k)\|^{\tfrac{1-\nu}{\nu}}\innprod{\nabla f(x^k)}{x^k-x^*}+\alpha_k^2\|\nabla f(x^k)\|^{\tfrac{2}{\nu}}\\
        &\le \|x^k-x^*\|^2- \tfrac{4\nu\alpha_k}{(1+\nu){\Lo}^{\nicefrac{1}{\nu}}} \|\nabla f(x^k)\|^{\tfrac{2}{\nu}} +\alpha_k^2\|\nabla f(x^k)\|^{\tfrac{2}{\nu}}\\
        &= \|x^k-x^*\|^2-\left(\tfrac{4\nu\alpha_k}{(1+\nu){\Lo}^{\nicefrac{1}{\nu}}}-\alpha_k^2\right) \|\nabla f(x^k)\|^{\tfrac{2}{\nu}}
         \le \|x^k-x^*\|^2.
    \end{align*}
    \ref{thm:SufDecSGA-3} The functions $\alpha\mapsto\alpha-\tfrac{\Lo}{1+\nu}\alpha^{1+\nu}$ and $\alpha\mapsto\tfrac{4\nu\alpha}{(1+\nu){\Lo}^{\nicefrac{1}{\nu}}}-\alpha^2$ respectively are the concave functions on $\big(0,\big(\tfrac{1+\nu}{\Lo}\big)^{\nicefrac{1}{\nu}}\big]$ and $\big(0,\tfrac{4\nu}{(1+\nu)\Lo^{\nicefrac{1}{\nu}}}\big]$, and their maximum respectively attain at $\alpha_k=\tfrac{1}{\Lo^{\nicefrac{1}{\nu}}}$ and $\alpha_k=\tfrac{2\nu}{(1+\nu)\Lo^{\nicefrac{1}{\nu}}}$. This completes the proof.\qed
\end{proof}

Theorem \ref{thm:GlobconvSGA} presents that the sequence $\seq{x^k}$ globally converges to an optimal solution.

\begin{thm}[\textbf{Global convergence of SGA}]
\label{thm:GlobconvSGA}
    Let Assumption \ref{ass:LevelBound} hold. If the sequence $\seq{x^k}$ is generated by Algorithm \ref{alg:SGA} with dynamic or constant step-size $\alpha_k \in [\ov\alpha , \tfrac{1}{\Lo^{\nicefrac{1}{\nu}}}]$ at which $0<\ov \alpha<\tfrac{1}{\Lo^{\nicefrac{1}{\nu}}}$, the following assertions hold:
    \begin{enumerate}
         \item\label{thm:GlobconvSGA-1}
            One has
            \begin{equation}\label{eq:squarefk1fk-2}
                f(x^{k+1}) \leq f(x^k)- \tfrac{\Lo\ov\alpha^{1+\nu} }{1+\nu}\|\nabla f(x^k)\|^{\tfrac{1+\nu}{\nu}},\quad \forall k\in\N_0;
            \end{equation}
        \item\label{thm:GlobconvSGA-2}
            We have $\sum_{k\ge 0} \|\nabla f(x^k)\|^{\tfrac{1+\nu}{\nu}} <\infty$ and $\displaystyle\lim_{k\to\infty} \|\nabla f(x^k)\|=0$;
        \item\label{thm:GlobconvSGA-3}
        The limiting points of $\seq{x^k}$ are optimal solutions and $\displaystyle\lim_{k\to\infty} f(x^k)=f^*$;
        \item\label{thm:GlobconvSGA-4}
            If $\alpha_k \in [\ov\alpha , \tfrac{2\nu}{(1+\nu)\Lo^{\nicefrac{1}{\nu}}}]$ where $0<\ov \alpha<\tfrac{2\nu}{(1+\nu)\Lo^{\nicefrac{1}{\nu}}}$, then for any $x^*\in \X^*$, one has
            \begin{equation}\label{eq:squareXk1Xk-4}
                \|x^{k+1}-x^*\|^2 \leq \|x^k-x^*\|^2 -\ov\alpha^2\|\nabla f(x^k)\|^{\tfrac{2}{\nu}} \quad \forall k\in\N_0.
            \end{equation}
            Moreover, the sequence $\seq{x^k}$ converges to an optimal solution.
    \end{enumerate}
\end{thm}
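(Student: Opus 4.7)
The proof strategy is to tighten the two sufficient-decrease bounds from Theorem~\ref{thm:SufDecSGA} to the restricted step-size windows, telescope the resulting function-value inequality, and combine this with compactness of the level set and Fej\'er monotonicity. The four parts build sequentially on one another.

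For \ref{thm:GlobconvSGA-1}, I would start from Theorem~\ref{thm:SufDecSGA}\ref{thm:SufDecSGA-1} and analyze the coefficient $\phi(\alpha):=\alpha-\tfrac{\Lo}{1+\nu}\alpha^{1+\nu}$. Since $\phi'(\alpha)=1-\Lo\alpha^\nu\ge 0$ on $[0,\Lo^{-\nicefrac{1}{\nu}}]$, the function $\phi$ is nondecreasing on $[\bar\alpha,\Lo^{-\nicefrac{1}{\nu}}]$, so $\phi(\alpha_k)\ge \phi(\bar\alpha)$; an elementary comparison at the endpoint $\bar\alpha$ using the hypothesis $\bar\alpha<\Lo^{-\nicefrac{1}{\nu}}$ then yields the explicit lower bound $\tfrac{\Lo\bar\alpha^{1+\nu}}{1+\nu}$ stated in~\eqref{eq:squarefk1fk-2}.

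For \ref{thm:GlobconvSGA-2}, telescoping~\eqref{eq:squarefk1fk-2} from $k=0$ to $N$ gives
\[
    \sum_{k=0}^{N}\|\nabla f(x^k)\|^{\tfrac{1+\nu}{\nu}}\le \tfrac{1+\nu}{\Lo\bar\alpha^{1+\nu}}\bigl(f(x^0)-f(x^{N+1})\bigr)\le \tfrac{1+\nu}{\Lo\bar\alpha^{1+\nu}}\bigl(f(x^0)-f^*\bigr),
\]
and letting $N\to\infty$ delivers summability and hence $\|\nabla f(x^k)\|\to 0$. For \ref{thm:GlobconvSGA-3}, Theorem~\ref{thm:SufDecSGA}\ref{thm:SufDecSGA-1} gives $\seq{x^k}\subseteq\Lambda(x^0)$, which is compact by Assumption~\ref{ass:LevelBound}, so $\seq{x^k}$ admits a convergent subsequence $x^{k_j}\to\bar x$; continuity of $\nabla f$ together with~\ref{thm:GlobconvSGA-2} forces $\nabla f(\bar x)=0$, so $\bar x\in\X^*$ by Fact~\ref{fac:optimality}. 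Since $\seq{f(x^k)}$ is nonincreasing (by~\ref{thm:GlobconvSGA-1}) and bounded below by $f^*$, it converges, and the subsequential limit $f(\bar x)=f^*$ pins the limit to $f^*$.

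For \ref{thm:GlobconvSGA-4}, I would specialize Theorem~\ref{thm:SufDecSGA}\ref{thm:SufDecSGA-2}: the coefficient $\psi(\alpha):=\tfrac{4\nu\alpha}{(1+\nu)\Lo^{\nicefrac{1}{\nu}}}-\alpha^2$ is concave with maximum at $\alpha=\tfrac{2\nu}{(1+\nu)\Lo^{\nicefrac{1}{\nu}}}$, and the inequality $\psi(\bar\alpha)\ge\bar\alpha^2$ is equivalent to $\bar\alpha\le\tfrac{2\nu}{(1+\nu)\Lo^{\nicefrac{1}{\nu}}}$, which is precisely the hypothesis; this yields~\eqref{eq:squareXk1Xk-4}, so $\{\|x^k-x^*\|\}$ is nonincreasing for every $x^*\in\X^*$, i.e., the sequence is Fej\'er monotone with respect to $\X^*$. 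The main technical point, and what I expect to be the only nonroutine step, is upgrading the subsequential convergence of~\ref{thm:GlobconvSGA-3} to convergence of the full sequence. I would invoke a standard Opial-type argument: Fej\'er monotonicity ensures $\lim_k\|x^k-x^*\|$ exists for every $x^*\in\X^*$; if $\bar x,\hat x\in\X^*$ are two limit points supplied by~\ref{thm:GlobconvSGA-3}, expanding $\|x^k-\bar x\|^2-\|x^k-\hat x\|^2=-2\innprod{x^k}{\bar x-\hat x}+\|\bar x\|^2-\|\hat x\|^2$ and passing to the limit along subsequences converging to $\bar x$ and to $\hat x$ forces $\bar x=\hat x$, so the entire sequence converges to the unique limit point in $\X^*$.
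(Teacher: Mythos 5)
Your proposal follows essentially the same route as the paper: part \ref{thm:GlobconvSGA-1} by lower-bounding the decrease coefficient at the left endpoint of the step-size window (you use monotonicity of $\phi$, the paper uses concavity plus endpoint evaluation — equivalent here), part \ref{thm:GlobconvSGA-2} by telescoping, part \ref{thm:GlobconvSGA-3} by compactness of $\Lambda(x^0)$ and monotonicity of $\seq{f(x^k)}$, and part \ref{thm:GlobconvSGA-4} by specializing Theorem~\ref{thm:SufDecSGA}\ref{thm:SufDecSGA-2} and invoking Fej\'er monotonicity. The only genuine difference is the finish of \ref{thm:GlobconvSGA-4}: you run the standard Opial-type argument (two limit points must coincide because $\lim_k\innprod{x^k}{\ov x-\hat x}$ exists and has two values), whereas the paper argues more directly that for a single limit point $\ov x\in\X^*$ the monotone sequence $\seq{\|x^k-\ov x\|}$ converges and its limit is $0$ along the subsequence; both are valid, yours being the more generic Fej\'er argument and the paper's being marginally shorter.

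One concrete caution on part \ref{thm:GlobconvSGA-1}: the ``elementary comparison at the endpoint'' you defer to is exactly the delicate step. The claimed bound $\phi(\ov\alpha)=\ov\alpha-\tfrac{\Lo}{1+\nu}\ov\alpha^{1+\nu}\geq\tfrac{\Lo}{1+\nu}\ov\alpha^{1+\nu}$ is equivalent to $\Lo\ov\alpha^{\nu}\leq\tfrac{1+\nu}{2}$, which does \emph{not} follow from $\ov\alpha<\Lo^{-\nicefrac{1}{\nu}}$ when $\nu<1$ (e.g.\ $\nu=\tfrac12$, $\Lo=1$, $\ov\alpha$ close to $1$ gives $\phi(\ov\alpha)\approx\tfrac13<\tfrac23\approx\tfrac{\Lo}{1+\nu}\ov\alpha^{1+\nu}$). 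The monotonicity of $\phi$ does give the correct and sufficient bound $\phi(\alpha_k)\geq\phi(\ov\alpha)\geq\tfrac{\nu}{1+\nu}\ov\alpha>0$, which is all that parts \ref{thm:GlobconvSGA-2}--\ref{thm:GlobconvSGA-3} actually need; note that the paper's own inequality \eqref{eq:LowerBoundCoeNab} suffers from the same issue, so this is a shared constant-bookkeeping slip rather than a defect specific to your argument, but you should state the constant you can actually prove rather than the one in \eqref{eq:squarefk1fk-2}.
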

\begin{proof}
    \ref{thm:GlobconvSGA-1}  The function $\alpha\mapsto\alpha-\tfrac{\Lo}{1+\nu}\alpha^{1+\nu}$ is a concave function on $[\ov\alpha , \tfrac{1}{\Lo^{\nicefrac{1}{\nu}}}]$ attaining its minimum at $\alpha=\ov\alpha$ or $\alpha=\tfrac{1}{\Lo^{\nicefrac{1}{\nu}}}$. Thus, 
    \begin{equation}\label{eq:LowerBoundCoeNab}
        \tfrac{\Lo}{1+\nu}\ov\alpha^{1+\nu}\le\min\left\{\ov\alpha-\tfrac{\Lo}{1+\nu}\ov\alpha^{1+\nu}, \tfrac{\nu}{(1+\nu)\Lo^{\nicefrac{1}{\nu}}}\right\}\le \alpha_k-\tfrac{\Lo}{1+\nu}\alpha_k^{1+\nu},\quad \forall k\in\N_0.
    \end{equation}
    Using this inequality, \eqref{eq:squarefk1fk-1} yields \eqref{eq:squarefk1fk-2}.\\
    \ref{thm:GlobconvSGA-2}, summing up both sides of \eqref{eq:squarefk1fk-2} from $k=0$ to $k=N$ gives us
    \begin{align}\label{eq:squarefk1fk-3}
        \tfrac{\Lo\ov\alpha^{1+\nu} }{1+\nu} \sum_{k=0}^{N}\|\nabla f(x^k)\|^{\tfrac{1+\nu}{\nu}} \le
        \sum_{k=0}^{N} \big( f(x^k) - f(x^{k+1})\big) = f(x^0) - f(x^{N+1}) \le f(x^0) - f^* <\infty,
    \end{align}
    ensuring $\sum_{k\ge 0} \|\nabla f(x^k)\|^{\tfrac{1+\nu}{\nu}} <\infty$ and $\displaystyle\lim_{k\to\infty} \|\nabla f(x^k)\|=0$.\\ \ref{thm:GlobconvSGA-3}
    Due to Theorem \ref{thm:SufDecSGA}~\ref{thm:SufDecSGA-1} and Assumption \ref{ass:LevelBound}, $\seq{x^k}\subseteq \Lambda(x^0)$ is a bounded sequence. Assume that $\subseq{x^{k_j}}$ is a subsequence of $\seq{x^k}$ converging to \(\ov x\in \Lambda(x^0)\). Then, by Assertion \ref{thm:GlobconvSGA-2},
    $\|\nabla f(\ov x)\|=0,$
    i.e., $\ov x\in \X^*$. Furthermore, we get $\displaystyle\lim_{k\to\infty} f(x^k)=f(\ov x)=f^*$.\\
    \ref{thm:GlobconvSGA-4} Let $\alpha_k \in [\ov\alpha , \tfrac{2\nu}{(1+\nu)\Lo^{\nicefrac{1}{\nu}}}]$ where $0<\ov \alpha<\tfrac{2\nu}{(1+\nu)\Lo^{\nicefrac{1}{\nu}}}$. Similar to Assertion \ref{thm:GlobconvSGA-1}, one can show that
    $$\ov\alpha^{2}\le\min\left\{\tfrac{4\nu}{(1+\nu){\Lo}^{\nicefrac{1}{\nu}}}\ov\alpha-\ov\alpha^2, \tfrac{4\nu^2}{(1+\nu)^{2}\Lo^{\nicefrac{2}{\nu}}}\right\}\le \tfrac{4\nu}{(1+\nu){\Lo}^{\nicefrac{1}{\nu}}}\alpha_k-\alpha_k^2,\quad \forall k\in\N_0.$$
    which together with \eqref{eq:squareXk1Xk-1} imply \eqref{eq:squareXk1Xk-4}. Let now $\ov x \in \X^*$ is a limiting point of $\seq{x^k}$. Then, the sequence $\seq{\|x^{k}-\ov x\|}$ is decreasing and bounded from below, which confirms its convergence, i.e.,
    $$\lim_{k\to\infty}\|x^{k}-\ov x\|=\lim_{j\to\infty}\|x^{k_j}-\ov x\|  = 0,$$
    completing the proof.\qed
\end{proof}

In the next theorem, we employ Theorem \ref{thm:GlobconvSGA} to provide the global complexity of the SGA to guarantee $\|\nabla f(x^k)\|\leq \varepsilon$, for a specified accuracy
parameter $\varepsilon>0$.
Let us reserve the notation $\Nnf(\varepsilon)$ as the initial iteration at which the inequality $\|\nabla f(x^k)\|\leq \varepsilon$ is fulfilled.

\begin{thm}[\textbf{Convergence rate and complexity analysis of SGA}]\label{thm:ConvRat-Com-SGA}
     Let Assumption \ref{ass:LevelBound} hold. If $\seq{x^k}$ is generated by Algorithm \ref{alg:SGA} with dynamic or constant step-size $\alpha_k \in [\ov\alpha , \tfrac{1}{\Lo^{\nicefrac{1}{\nu}}}]$ at which $0<\ov \alpha<\tfrac{1}{\Lo^{\nicefrac{1}{\nu}}}$, one has
     \begin{equation}\label{eq:UppBounabla1}
     \min_{0\le k\le N} \|\nabla f(x^k)\| \le \Big(\tfrac{(1+\nu)\big(f(x^0)-f^*\big)}{\Lo\ov\alpha^{1+\nu}N}\Big)^{\tfrac{\nu}{1+\nu}}, \quad\quad \forall N\in\N.
     \end{equation}
     Moreover, for a given accuracy $\varepsilon>0$, the number of iterations to guarantee $\|\nabla f(x_{k})\| \leq \varepsilon$, denoted by $\Nnf(\varepsilon)$, satisfies
    \begin{equation}\label{eq:UppBouIt}
        \Nnf(\varepsilon)\leq K_{0}:=\left\lceil\tfrac{(1+\nu)\big(f(x^0)-f^*\big)}{\Lo\ov\alpha^{1+\nu}\varepsilon^{\tfrac{1+\nu}{\nu}}} +1\right\rceil.
    \end{equation}
\end{thm}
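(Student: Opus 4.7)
The plan is to derive both bounds directly from the sufficient decrease inequality \eqref{eq:squarefk1fk-2} obtained in Theorem~\ref{thm:GlobconvSGA}~\ref{thm:GlobconvSGA-1}, combined with a standard telescoping-plus-min-below-average argument.

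First, I invoke Theorem~\ref{thm:GlobconvSGA}~\ref{thm:GlobconvSGA-1}, which under the stated step-size choice yields
\[
    \tfrac{\Lo\ov\alpha^{1+\nu}}{1+\nu}\|\nabla f(x^k)\|^{\tfrac{1+\nu}{\nu}} \le f(x^k) - f(x^{k+1}), \qquad \forall k\in \N_0.
\]
Summing from $k=0$ to $k=N-1$ (so that the right-hand side telescopes to $f(x^0)-f(x^{N})\le f(x^0)-f^*$ using Assumption~\ref{assumptionSG}~\ref{assumptionSG-2}) produces the aggregate inequality
\[
    \tfrac{\Lo\ov\alpha^{1+\nu}}{1+\nu}\sum_{k=0}^{N-1}\|\nabla f(x^k)\|^{\tfrac{1+\nu}{\nu}} \le f(x^0)-f^*.
\]
Next, since the minimum of a finite sequence is bounded above by its average, I replace every term in the sum by $\min_{0\le k\le N-1}\|\nabla f(x^k)\|^{\tfrac{1+\nu}{\nu}}$; noting that extending the minimization range to $0\le k\le N$ can only make the min smaller, rearranging the resulting inequality and raising to the power $\nu/(1+\nu)$ yields \eqref{eq:UppBounabla1}.

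For the complexity part, I set the right-hand side of \eqref{eq:UppBounabla1} equal to $\varepsilon$ and solve for $N$. Concretely, if
\[
    \Big(\tfrac{(1+\nu)\big(f(x^0)-f^*\big)}{\Lo\ov\alpha^{1+\nu}N}\Big)^{\tfrac{\nu}{1+\nu}} \le \varepsilon,
\]
then raising both sides to the power $(1+\nu)/\nu$ and rearranging gives
\[
    N \ge \tfrac{(1+\nu)\big(f(x^0)-f^*\big)}{\Lo\ov\alpha^{1+\nu}\varepsilon^{\tfrac{1+\nu}{\nu}}}.
\]
Taking the ceiling (and adding a safe $+1$ to account for indexing) produces \eqref{eq:UppBouIt} and thereby bounds $\MN(\varepsilon)$.

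There is no real obstacle here; the calculation is routine once the decrease inequality from Theorem~\ref{thm:GlobconvSGA} is in hand. The only point requiring minor care is the bookkeeping between the summation range ($N$ terms versus $N+1$ terms) and the argument of the minimum, which I handle by summing over $N$ terms and noting monotonicity of the minimum with respect to the index set; this ensures the constant appearing in \eqref{eq:UppBounabla1} matches exactly the one claimed, rather than an off-by-one variant.
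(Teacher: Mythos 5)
Your proposal is correct and follows essentially the same route as the paper: telescoping the sufficient-decrease inequality \eqref{eq:squarefk1fk-2} from Theorem~\ref{thm:GlobconvSGA}~\ref{thm:GlobconvSGA-1}, bounding the minimum by the average, and then inverting the resulting rate to get $K_0$ (the paper sums over $k=0,\dots,N$ and states the slightly weaker bound with $N$ in the denominator, whereas you sum over $N$ terms to hit the constant exactly, but this is the same argument). The only cosmetic slip is writing $\MN(\varepsilon)$ instead of $\Nnf(\varepsilon)$ at the end.
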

\begin{proof}
    The inequality \eqref{eq:UppBounabla1} follows from \eqref{eq:squarefk1fk-3}.  Additionally, it holds that
    $$\min_{0\le k\le K_{0}} \|\nabla f(x^k)\| \le \Big(\tfrac{(1+\nu)\big(f(x^0)-f^*\big)}{\Lo\ov\alpha^{1+\nu}K_{0}}\Big)^{\tfrac{\nu}{1+\nu}} \le \varepsilon,$$
    verifying $\Nnf(\varepsilon)\leq K_{0}$, adjusting the claim.\qed
\end{proof} 

\subsubsection{\textbf{Linear convergence under local strong convexity}}

Theorem \ref{thm:linConvSGA-LSC} establishes linear convergence of the iterates distance $\seq{\|x^k - x^*\|}$, the function value gap $\seq{f(x^k)-f^*}$, and the gradient norm $\seq{\|\nabla f(x^k)\|}$,  under local Lipschitzness of $\nabla f$, ($\nu=1$), and local strong convexity of $f$. Specifically, in this setting, the function $f$ is $\mu_\Omega$-strongly convex on $\Omega$.

\begin{thm}[\textbf{Linear convergence rate under local strong convexity}]
\label{thm:linConvSGA-LSC}
    Let Assumption \ref{ass:LevelBound} hold, let $\nabla f$ be locally Lipschitz $(\nu=1)$, and let the function $f$ be locally strongly convex where $\mu_\Omega<\Lo$. If the sequence $\seq{x^k}$ is generated by Algorithm~\ref{alg:SGA} with dynamic or constant step-size $\alpha_k\in [\ov\alpha,\tfrac{1}{\Lo}]$ at which $0<\ov\alpha<\tfrac{1}{\Lo}$, then the following assertions hold:
     \begin{enumerate}
        \item\label{thm:linConvSGA-LSC-1}
            The sequence $\seq{\|x^{k}-x^*\|}$ and $\seq{f(x)-f^*}$ converge Q-linearly to $0$ with the rates $\sqrt{1-q_0}$ and \(1-q_1\), respectively, i.e.,
            \begin{equation}\label{eq:uperlinearstrong2}
                \begin{split}
                    \|x^{k+1}-x^*\|^2 \leq \left(1-q_0\right) \|x^{k}-x^*\|^2,\quad\quad\forall k\in \N_0,
                \end{split}
            \end{equation}
            \begin{equation}\label{eq:LineStrFun}
                \begin{split}
                    f(x^{k+1})-f^* \leq \left(1-q_1\right) \big(f(x^{k})-f^*\big),\quad\quad\forall k\in \N_0,
                \end{split}
            \end{equation}
            where $x^*$ is an optimal solution, $q_0:=\tfrac{2\ov{\alpha}\mu_\Omega \Lo}{\mu_\Omega+\Lo}$, and $q_1:=\tfrac{\mu_\Omega^2\ov{\alpha}^2}{4}$;
        \item\label{thm:linConvSGA-LSC-2}
            The sequence $\seq{\|\nabla f(x^k)\|}$ converges R-linearly to $0$, i.e.,
            \[
            \|\nabla f(x^k)\| \le \tfrac{\sqrt{2}\Lo}{\sqrt{\mu_\Omega}} (f(x^0)-f^*)^{1/2}  \left(1-q_1\right)^{k/2}, \quad\quad \forall k\in \N_0.
            \]
     \end{enumerate}
\end{thm}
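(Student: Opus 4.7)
The plan is to exploit the fact that the entire trajectory stays inside the compact convex ball $\Omega$, where $f$ is simultaneously $\Lo$-smooth (by local Lipschitzness of $\nabla f$) and $\mu_\Omega$-strongly convex (via Theorem~\ref{thm:localglobal-strong}). Since both $x^k$ and $x^*$ lie in $\Lambda(x^0)\subseteq\Omega$ by Theorem~\ref{thm:SufDecSGA}\ref{thm:SufDecSGA-1}, and since $\nabla f(x^*)=0$, the refined inequalities of Lemma~\ref{lem:LocalStrLipCont} applied to the pair $(x^k, x^*)$ are the main tool throughout.

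For the iterate bound \eqref{eq:uperlinearstrong2}, I would expand $\|x^{k+1}-x^*\|^2$ using the SGA update with $\nu=1$, obtaining $\|x^k-x^*\|^2 - 2\alpha_k \innprod{\nabla f(x^k)}{x^k-x^*} + \alpha_k^2 \|\nabla f(x^k)\|^2$. Applying Lemma~\ref{lem:LocalStrLipCont}\ref{lem:LocalStrLipCont-1} to lower-bound the inner-product term and then invoking $\alpha_k\le \tfrac{1}{\Lo}$ together with $\mu_\Omega<\Lo$ (which guarantees $\tfrac{1}{\Lo}<\tfrac{2}{\mu_\Omega+\Lo}$) renders the coefficient of $\|\nabla f(x^k)\|^2$ nonpositive, leaving exactly the contraction factor $1-\tfrac{2\alpha_k\mu_\Omega\Lo}{\mu_\Omega+\Lo}\le 1-q_0$.

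For \eqref{eq:LineStrFun}, I would first derive a local Polyak-{\L}ojasiewicz inequality from strong convexity on $\Omega$: evaluating at the pair $(x^k,x^*)$ gives $f(x^k)-f^*\le \|\nabla f(x^k)\|\|x^k-x^*\|-\tfrac{\mu_\Omega}{2}\|x^k-x^*\|^2$, and maximizing the right-hand side in $t=\|x^k-x^*\|\ge 0$ yields $\|\nabla f(x^k)\|^2\ge 2\mu_\Omega (f(x^k)-f^*)$. Substituting this bound into the sufficient decrease \eqref{eq:squarefk1fk-1} produces a contraction factor of the form $1-2\mu_\Omega(\alpha_k-\tfrac{\Lo}{2}\alpha_k^2)$. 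Using $\Lo\ov\alpha<1$ one gets $\alpha_k-\tfrac{\Lo}{2}\alpha_k^2\ge \tfrac{\ov\alpha}{2}$, and the trivial estimate $\mu_\Omega\ov\alpha\ge \tfrac{\mu_\Omega^2\ov\alpha^2}{4}$ (valid since $\mu_\Omega\ov\alpha<\mu_\Omega/\Lo<1<4$) delivers the stated rate $1-q_1$.

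Finally, for Assertion~\ref{thm:linConvSGA-LSC-2}, I would chain three ingredients: $\Lo$-Lipschitzness of $\nabla f$ combined with $\nabla f(x^*)=0$ gives $\|\nabla f(x^k)\|\le \Lo\|x^k-x^*\|$; strong convexity at $(x^k,x^*)$ gives $\|x^k-x^*\|^2\le \tfrac{2}{\mu_\Omega}(f(x^k)-f^*)$; and iterating \eqref{eq:LineStrFun} yields $f(x^k)-f^*\le (1-q_1)^k(f(x^0)-f^*)$. Combining these three bounds produces $\|\nabla f(x^k)\|\le \tfrac{\sqrt{2}\Lo}{\sqrt{\mu_\Omega}}(f(x^0)-f^*)^{1/2}(1-q_1)^{k/2}$, as claimed. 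The only real difficulty in the whole argument is bookkeeping: verifying at each invocation of Lemma~\ref{lem:LocalStrLipCont} that both evaluation points lie in $\Omega$ (guaranteed by $\Lambda(x^0)\subseteq\Omega$), and degrading the cleaner intermediate coefficient $\mu_\Omega\ov\alpha$ produced by the Polyak-{\L}ojasiewicz step down to the slightly suboptimal closed form $q_1=\tfrac{\mu_\Omega^2\ov\alpha^2}{4}$ stated in the theorem.
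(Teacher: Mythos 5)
Your proposal is correct and follows essentially the same route as the paper: Lemma~\ref{lem:LocalStrLipCont} applied at $(x^k,x^*)$ for the iterate contraction, a gradient-dominance inequality combined with the sufficient decrease for the function values, and the chain $\|\nabla f(x^k)\|\le \Lo\|x^k-x^*\|\le \Lo\sqrt{2\mu_\Omega^{-1}(f(x^k)-f^*)}$ for the gradient norm. The one localized difference is in the gradient-dominance step: you derive the standard Polyak--{\L}ojasiewicz bound $\|\nabla f(x^k)\|^2\ge 2\mu_\Omega(f(x^k)-f^*)$ by maximizing over $\|x^k-x^*\|$, whereas the paper chains $\tfrac{\mu_\Omega}{2}\|x^k-x^*\|\le\|\nabla f(x^k)\|$ with $f(x^k)-f^*\le\tfrac{\Lo}{2}\|x^k-x^*\|^2$ to get the weaker constant $\tfrac{\mu_\Omega^2}{2\Lo}$; your version yields the sharper intermediate rate $1-\mu_\Omega\ov\alpha$, which you then correctly relax to the stated $1-q_1$.
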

\begin{proof}
    \ref{thm:linConvSGA-LSC-1} By Proposition \ref{pro:SingleStrong}, the solution set $\X^*=\{x^*\}$ is singleton. Moreover, Theorem \ref{thm:localglobal-strong} ensures that the function $f$ is $\mu_{\Omega}$-strongly convex on $\Omega\supseteq \Lambda(x^0)$.
    Applying Lemma~\ref{lem:LocalStrLipCont} with $x=x^*$ and $y=x^k$ yields
    \begin{align*}
        \innprod{\nabla f(x^k)}{x^k-x^*}\geq \tfrac{\mu_\Omega \Lo}{\mu_\Omega+\Lo} \|x^k-x^*\|^2 + \tfrac{1}{\mu_\Omega+\Lo} \|\nabla f(x^k)\|^2.
    \end{align*}
    Using this inequality together with $\ov \alpha \le \alpha_k \le \tfrac{1}{\Lo}$, we obtain
    \begin{align*}
        \|x^{k+1}-x^*\|^2 &= \|x^k-x^* -\alpha_k \nabla f(x^k)\|^2= \|x^k-x^*\|^2-2\alpha_k\innprod{\nabla f(x^k)}{x^k-x^*}+\alpha_k^2\|\nabla f(x^k)\|^2\\
        &\leq \left(1-\tfrac{2\mu_\Omega \Lo\alpha_k}{\mu_\Omega+\Lo}\right)\|x^k-x^*\|^2+\alpha_k\left(\alpha_k-\tfrac{2}{\mu_\Omega+\Lo}\right)\|\nabla f(x^k)\|^2\\
        &\leq \left(1-\tfrac{2\mu_\Omega \Lo\ov\alpha}{\mu_\Omega+\Lo}\right)\|x^k-x^*\|^2,
    \end{align*}
    verifying \eqref{eq:uperlinearstrong2}.
    Let us prove \eqref{eq:LineStrFun}.
    It follows from $\mu_\Omega$-strong convexity of $f$ that 
    \[
    \tfrac{\mu_\Omega}{2} \|x^k -x^*\|^2 \le f(x^k) -f^* +\tfrac{\mu_\Omega}{2} \|x^k -x^*\|^2 \le \innprod{\nabla f(x^k)}{x^k-x^*} \le \|\nabla f(x^k)\| \|x^k-x^*\|,
    \]
    implying \(\tfrac{\mu_\Omega}{2} \|x^k -x^*\| \le \|\nabla f(x^k)\|\). Using this inequality and $\Lo$-smoothness of $f$, we come to
    \[
    f(x^k)-f^* \le \innprod{\nabla f(x^*)}{x^k-x^*} + \tfrac{\Lo}{2} \|x^k -x^*\|^2 \le \tfrac{2\Lo}{\mu_\Omega^2} \|\nabla f(x^k)\|^2.
    \]
    Now, by Proposition \ref{pro:HSmoothChar}~\ref{pro:HSmoothChar-2} and applying former inequality together with \eqref{eq:LowerBoundCoeNab} with $\nu=1$, it holds that
    \begin{align*}
        f(x^{k+1})-f^*&\leq f(x^{k}) -f^* + \innprod{\nabla f(x^k)}{x^{k+1}-x^k} + \tfrac{\Lo}{2}\|x^{k+1}-x^k\|^{2}\\
        &\le f(x^{k}) -f^* -\alpha_k \|\nabla f(x^k)\|^{2} + \tfrac{\Lo}{2}\alpha_k^{2} \|\nabla f(x^k)\|^{2}\\ &\le f(x^{k})-f^* - \left(\alpha_k-\tfrac{\Lo}{2}\alpha_k^2\right) \|\nabla f(x^k)\|^2\\
        &\le f(x^k)-f^* - \tfrac{\mu_\Omega^2\ov{\alpha}^2}{4}  \big (f(x^k) -f^*\big),
    \end{align*}
    ensuring \eqref{eq:LineStrFun}.\\
    \ref{thm:linConvSGA-LSC-2} It follows from $\Lo$-smoothness and $\mu_\Omega$-strong convexity of $f$ on $\Omega$ that
    \[
    \|\nabla f(x^k)\|^2 \le \Lo^2 \|x^k-x^*\|^2 \le \tfrac{2\Lo^2}{\mu_\Omega} (f(x^k)-f^*), \quad\quad \forall k\in \N_0.
    \]
    Therefore, the claim is justified using inequality \eqref{eq:LineStrFun}, completing the proof.\qed
\end{proof}

\begin{rem}\label{rem:L=mu}
    Suppose the assumptions of Theorem \ref{thm:linConvSGA-LSC} hold. In the special case where $\Lo=\mu_\Omega$, the function $h$ is quadratic, i.e., $h(x)=\tfrac{\Lo}{2}\|x\|^2 + \innprod{\nabla h(0)}{x} + h(0)$ by Corollary \ref{cor:LocalStrLipCont}. This yields that the optimal solution is given by $x^*= -\tfrac{\nabla h(0)}{\Lo}$. Consequently, this case was excluded from Theorem \ref{thm:linConvSGA-LSC}.\qed
\end{rem}

The following theorem investigates the complexity of the SGA under local Lipschitzness of $\nabla f$, ($\nu=1$), local strong convexity. For simplicity, let us define the following operator for $x,y>0$:
\begin{align*}
(x,y) \mapsto\MN(x, y):=\left\lceil\tfrac{\log(\varepsilon^{-1}) + \log\left(x\right)}{\log\big(y^{-1}\big)}+1\right\rceil.
\end{align*}

\begin{thm}[\textbf{Complexity analysis under local strong convexity}] 
\label{cor:ComAnaSGA-LSC}
    Let Assumption \ref{ass:LevelBound}, let $\nabla f$ be locally Lipschitz $(\nu=1)$, and let the function $f$ be locally strongly convex where $\mu_\Omega<\Lo$. If the sequence $\seq{x^k}$ is generated by Algorithm~\ref{alg:SGA} with dynamic or constant step-size $\alpha_k\in [\ov\alpha,\tfrac{1}{\Lo}]$ at which $0<\ov\alpha<\tfrac{1}{\Lo}$, for a given accuracy parameter $\varepsilon>0$, the following assertions hold:
    \begin{enumerate}
        \item\label{cor:ComAnaSGA-LSC-1}
            The number of iterations to guarantee $\|x^k-x^*\|\leq \varepsilon$, denoted by $\Nx(\varepsilon)$, satisfies $$\Nx(\varepsilon)\le \MN(\|x^0-x^*\|,\sqrt{1-q_0});$$
        \item\label{cor:ComAnaSGA-LSC-2}
            The number of iterations to guarantee $f(x^k)-f^*\leq \varepsilon$, denoted by $\Nf(\varepsilon)$, satisfies $$\Nf(\varepsilon)\le \MN(f(x^{0})-f^*,1-q_1);$$
        \item\label{cor:ComAnaSGA-LSC-3}
            The number of iterations to guarantee $\|\nabla f(x^k)\|\leq \varepsilon$, denoted by $\Nnf(\varepsilon)$, satisfies $$\Nnf(\varepsilon)\le \MN\Big(\sqrt{2\Lo^2\mu_\Omega^{-1}(f(x^{0})-f^*)},\sqrt{1-q_1}\Big);$$
    \end{enumerate}
    where $x^*$ is an optimal solution, $q_0:=\tfrac{2\ov{\alpha}\mu_\Omega \Lo}{\mu_\Omega+\Lo}$, and $q_1:=\tfrac{\mu_\Omega^2\ov{\alpha}^2}{4}$.
\end{thm}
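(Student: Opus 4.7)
The plan is to derive each of the three iteration-complexity bounds by iterating the corresponding linear convergence inequality from Theorem~\ref{thm:linConvSGA-LSC} and then solving for the smallest $k$ that forces the resulting geometric expression below $\varepsilon$. In every case, the bound will be logarithmic in $\varepsilon^{-1}$, and after taking $\log(\cdot)$ on both sides the resulting expression matches the defined operator $\MN(\cdot,\cdot)$ exactly.

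For assertion~\ref{cor:ComAnaSGA-LSC-1}, I would unroll the recursion \eqref{eq:uperlinearstrong2} to obtain $\|x^{k}-x^*\|\le (1-q_0)^{k/2}\|x^0-x^*\|$ and then require the right-hand side to be at most $\varepsilon$. Taking logarithms and rearranging (noting that $\log((1-q_0)^{-1/2})=\log(\sqrt{1-q_0}^{\,-1})>0$) yields $k\ge \tfrac{\log(\varepsilon^{-1})+\log\|x^0-x^*\|}{\log(\sqrt{1-q_0}^{\,-1})}$, and adding the ceiling with $+1$ absorbs the rounding and guarantees the inequality holds, matching $\MN(\|x^0-x^*\|,\sqrt{1-q_0})$. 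For assertion~\ref{cor:ComAnaSGA-LSC-2}, the same argument applied to \eqref{eq:LineStrFun} gives $f(x^k)-f^*\le (1-q_1)^k(f(x^0)-f^*)$ and the analogous solve yields the bound $\MN(f(x^0)-f^*,1-q_1)$.

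For assertion~\ref{cor:ComAnaSGA-LSC-3}, I would invoke Theorem~\ref{thm:linConvSGA-LSC}\ref{thm:linConvSGA-LSC-2}, which already provides the R-linear bound $\|\nabla f(x^k)\|\le \tfrac{\sqrt{2}\Lo}{\sqrt{\mu_\Omega}}(f(x^0)-f^*)^{1/2}(1-q_1)^{k/2}$. Imposing this upper bound $\le\varepsilon$, taking logarithms, and dividing by $\log((1-q_1)^{-1/2})=\log(\sqrt{1-q_1}^{\,-1})$ produces exactly $\MN\bigl(\sqrt{2\Lo^2\mu_\Omega^{-1}(f(x^0)-f^*)},\sqrt{1-q_1}\bigr)$ after taking the ceiling and adding one.

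No step is genuinely hard here: the theorem is a direct corollary of the linear rates already proved, and the only subtlety is keeping the bookkeeping consistent so that the quotient of logarithms matches the given definition of $\MN$, together with verifying that each $q_i\in(0,1)$ so that the denominators $\log(\sqrt{1-q_0}^{\,-1})$, $\log((1-q_1)^{-1})$, and $\log(\sqrt{1-q_1}^{\,-1})$ are positive and the bounds are well defined. This positivity follows from $\ov\alpha<\tfrac{1}{\Lo}$ and $\mu_\Omega<\Lo$, which forces $q_0<1$ and $q_1<1$.
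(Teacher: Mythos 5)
Your proposal is correct and follows essentially the same route as the paper: the paper also plugs the candidate iteration count $K_1=\MN(\cdot,\cdot)$ into the linear-rate inequalities of Theorem~\ref{thm:linConvSGA-LSC} (including part~\ref{thm:linConvSGA-LSC-2} for the gradient bound) and verifies the resulting geometric expression is at most $\varepsilon$, which is just your log-solving argument run in reverse. Your additional check that $q_0,q_1\in(0,1)$ is a sensible piece of bookkeeping the paper leaves implicit.
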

\begin{proof}
    \ref{cor:ComAnaSGA-LSC-1} Setting $K_1:=\MN(\|x^0-x^*\|,\sqrt{1-q_0})$, it follows from Theorem \ref{thm:linConvSGA-LSC}~\ref{thm:linConvSGA-LSC-1} that
    \[
    \|x^{K_1}-x^*\|\le \big( 1-q_0\big)^{\tfrac{K_1}{2}} \|x^0-x^*\| \le \varepsilon.
    \]
    ensuring $\Nx(\varepsilon)\le K_1$.
    The proof of Assertions \ref{cor:ComAnaSGA-LSC-2} and \ref{cor:ComAnaSGA-LSC-3} is similar to one of Assertion \ref{cor:ComAnaSGA-LSC-1} applying Theorem \ref{thm:linConvSGA-LSC}.\qed
\end{proof}

\subsubsection{\textbf{Linear convergence under KL inequality}}

In this subsection we provide the linear convergence of the SAG under the assumption of the KL inequality. As a preliminary step, the following lemma shows that the KL exponent $\vartheta$ corresponding to the limiting point of the sequence $\seq{x_k}$, generated by Algorithm~\ref{alg:SGA}, satisfies the bound $\vartheta_\Omega\ge \tfrac{\nu}{1+\nu}$.

\begin{lem}\label{lem:Bound-KL}
    Let Assumption \ref{ass:LevelBound} hold and let the sequence $\seq{x^k}$ be generated by Algorithm~\ref{alg:SGA} with dynamic or constant step-size $\alpha_k \in [\ov\alpha , \tfrac{1}{\Lo}]$ where $0<\ov \alpha<\tfrac{1}{\Lo}$.
    If $f$ is satisfied the KL inequality, then $\vartheta_\Omega \ge \tfrac{\nu}{1+\nu}$.
\end{lem}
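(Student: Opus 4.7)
The plan is to combine two ingredients available on $\Omega$: a \emph{lower} bound on $f(x^k)-f^*$ in terms of $\|\nabla f(x^k)\|^{(1+\nu)/\nu}$ coming from the SGA descent analysis, and the \emph{upper} bound $(f(x^k)-f^*)^{\vartheta_\Omega}\le \rho_\Omega\|\nabla f(x^k)\|$ supplied by the KL inequality on the compact set $\Omega$. Playing one against the other near the optimum forces $\vartheta_\Omega(1+\nu)/\nu\ge 1$.

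First, I would read off from Theorem~\ref{thm:GlobconvSGA}\ref{thm:GlobconvSGA-1} (which applies since $\alpha_k$ lies in the admissible interval) the sufficient decrease
\[
    f(x^k)-f(x^{k+1})\;\ge\;\tfrac{\Lo\,\bar\alpha^{1+\nu}}{1+\nu}\,\|\nabla f(x^k)\|^{(1+\nu)/\nu}.
\]
Since $f^*\le f(x^{k+1})$ this immediately yields
\[
    f(x^k)-f^*\;\ge\;\tfrac{\Lo\,\bar\alpha^{1+\nu}}{1+\nu}\,\|\nabla f(x^k)\|^{(1+\nu)/\nu},\qquad k\in\N_0.
\]
(Alternatively, the same inequality follows from Proposition~\ref{pro:HSmoothChar}\ref{pro:HSmoothChar-3} with $x=x^*$ and $y=x^k$, after checking that the auxiliary point $x^k-\Lo^{-1/\nu}\|\nabla f(x^k)\|^{(1-\nu)/\nu}\nabla f(x^k)$ lies in $\Omega$; this check uses exactly the estimate $\|x^k\|+\Lo^{-1/\nu}\|\nabla f(x^k)\|^{1/\nu}\le 3r\le\bar r$ already verified in the proof of Theorem~\ref{thm:SufDecSGA}.)

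Next, Theorem~\ref{thm:KL}\ref{thm:KL-2} gives $\vartheta_\Omega\in(0,1)$ and $\rho_\Omega>0$ with $(f(x)-f^*)^{\vartheta_\Omega}\le\rho_\Omega\|\nabla f(x)\|$ for every $x\in\Omega$; since $\{x^k\}\subseteq\Lambda(x^0)\subseteq\Omega$ this holds at each iterate. Raising the KL inequality to the power $(1+\nu)/\nu$ and chaining with the lower bound above produces, for all $k$ with $f(x^k)>f^*$, a uniform bound of the form
\[
    \bigl(f(x^k)-f^*\bigr)^{\vartheta_\Omega(1+\nu)/\nu-1}\;\le\;C,
\]
where $C>0$ depends only on $\Lo$, $\nu$, $\bar\alpha$, $\rho_\Omega$.

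I would then argue by contradiction: if $\vartheta_\Omega<\nu/(1+\nu)$ then $\vartheta_\Omega(1+\nu)/\nu-1<0$, while Theorem~\ref{thm:GlobconvSGA}\ref{thm:GlobconvSGA-3} gives $f(x^k)\to f^*$, forcing the left-hand side to diverge and contradicting the uniform bound $C$. Hence $\vartheta_\Omega\ge\nu/(1+\nu)$, as claimed. The step I expect to require the most care is the degenerate case in which $f(x^{k_0})=f^*$ at some finite $k_0$, so that the sequence $\{f(x^k)-f^*\}$ ceases to probe values near $0^+$. In that situation I would instead apply the pointwise lower bound from Proposition~\ref{pro:HSmoothChar}\ref{pro:HSmoothChar-3} to an auxiliary sequence $\{y_n\}\subseteq\Omega$ with $y_n\to x^{k_0}$ and $f(y_n)>f^*$ (such a sequence exists unless $f\equiv f^*$ on a neighborhood of $x^{k_0}$ in $\Omega$, in which case $\nabla f\equiv 0$ there and the KL inequality — hence the lower bound on $\vartheta_\Omega$ — is trivially satisfied).
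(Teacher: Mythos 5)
Your proof is correct and follows essentially the same route as the paper: both combine the sufficient-decrease inequality \eqref{eq:squarefk1fk-2} with the uniform KL inequality on $\Omega$ (Theorem~\ref{thm:KL}~\ref{thm:KL-2}) to get a uniform bound on $(f(x^k)-f^*)^{\vartheta_\Omega(1+\nu)/\nu-1}$, and then let $f(x^k)-f^*\to 0$ force the exponent to be nonnegative; your version merely raises the chain to the power $\tfrac{1+\nu}{\nu}$ instead of taking the $\tfrac{\nu}{1+\nu}$ root. Your extra treatment of the degenerate case $f(x^{k_0})=f^*$ is a sensible refinement that the paper's proof silently skips, but it does not change the substance of the argument.
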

\begin{proof}
    By Theorems \ref{thm:SufDecSGA}~\ref{thm:SufDecSGA} and \ref{thm:GlobconvSGA}~\ref{thm:GlobconvSGA-3}, $\seq{x^k} \subseteq \Omega$ and $\displaystyle \lim_{k\to\infty} f(x^k) = f^*$. Hence,
    \begin{equation}\label{eq:KL-Local-Global}
        (f(x^k)-f^*)^{\vartheta_\Omega} \leq \rho_\Omega\|\nabla f(x^k)\|,\quad\quad \forall k\in \N_0,
    \end{equation}
    due to Theorem~\ref{thm:KL}~\ref{thm:KL-2}. Combining this inequality with inequality \eqref{eq:squarefk1fk-2} in Theorem \ref{thm:GlobconvSGA}~\ref{thm:GlobconvSGA-1}, yields that
    \begin{equation}\label{eq:Ineq-fun-SGA-KL}
        (f(x^k)-f^*)^{\vartheta_\Omega} \le \rho(\tfrac{1+\nu}{\Lo\ov\alpha^{1+\nu}})^{\tfrac{\nu}{1+\nu}} \big(f(x^k)-f(x^{k+1})\big)^{\tfrac{\nu}{1+\nu}} \le \rho(\tfrac{1+\nu}{\Lo\ov\alpha^{1+\nu}})^{\tfrac{\nu}{1+\nu}} \big(f(x^k)-f^*\big)^{\tfrac{\nu}{1+\nu}}, \quad\quad \forall k\in \N_0,
    \end{equation}
    leading to 
    \[
    (f(x^k)-f^*)^{\vartheta_\Omega-\tfrac{\nu}{1+\nu}} \le \rho(\tfrac{1+\nu}{\Lo\ov\alpha^{1+\nu}})^{\tfrac{\nu}{1+\nu}}, \quad\quad \forall k\in \N_0.
    \]
    This ensures $\vartheta_\Omega\ge \tfrac{\nu}{1+\nu}$ inasmuch as $f(x^k)-f^*\to 0$ as $k\to \infty$. This completes the proof.\qed
\end{proof}

According to Lemma \ref{lem:Bound-KL}, the condition $\vartheta_\Omega \ge \tfrac{\nu}{1+\nu}$ is necessary to ensure the possibility $f(x^k)-f^*\to 0$ as $k\to \infty$. The next theorem establishes the linear convergence of function value gap and norm of gradient under KL property.

\begin{thm}[\textbf{Linear convergence rate of function value and gradient under the KL inequality}]
\label{thm:linConv-Fun-SGA-KL}
    Let Assumption \ref{assumptionSG}  hold and 
    let the sequence $\seq{x^k}$ be generated by Algorithm~\ref{alg:SGA} with dynamic or constant step-size $\alpha_k \in [\ov\alpha , \tfrac{1}{\Lo}]$ where $0<\ov \alpha<\tfrac{1}{\Lo}$.
    If $f$ is satisfied the KL inequality, the following assertions hold:
    \begin{enumerate}
        \item\label{thm:linConv-Fun-SGA-KL-1} 
            If $\vartheta_\Omega= \tfrac{\nu}{1+\nu}$, the sequences $\seq{f(x^k)-f^*}$ and $\seq{\|\nabla f(x^k)\|}$ converges to $0$ Q-linearly and R-linearly, respectively, i.e.,
        \begin{equation}\label{eq:Lin-Fun-Inqu}
            f(x^{k+1})-f^* \leq \left(1-q_2\right) \big(f(x^{k})-f^*\big),\quad\quad\forall k\in \N_0,
        \end{equation}
         \begin{equation}\label{eq:Lin-Grad-Inqu}
            \|\nabla f(x^k)\| \leq \Big(\tfrac{f(x^{0})-f^*}{(1-\vartheta_\Omega)\Lo\ov\alpha^{\tfrac{1}{1-\vartheta_\Omega}}}\Big)^{\vartheta_\Omega} \left(1-q_2\right)^{k\vartheta_\Omega}, \quad\quad\forall k\in\N_0,
        \end{equation}
        where $q_2:=(1-\vartheta_\Omega)\Lo\ov\alpha^{\tfrac{1}{1-\vartheta_\Omega}}\rho^{-\tfrac{1}{\vartheta_\Omega}}$;
        \item\label{thm:linConv-Fun-SGA-KL-2} 
            If $\vartheta_\Omega >\tfrac{\nu}{1+\nu}$, there exists $\sigma>0$ such that 
        \begin{equation}\label{eq:SubLin-Fun-Inqu}
            0\leq f(x^k)-f^*\leq \sigma k^{-\tfrac{\nu}{\vartheta_\Omega (1+\nu)-\nu}},
        \end{equation}
        \begin{equation}\label{eq:SubLin-Grad-Inqu}
            \|\nabla f(x^k)\| \leq \big(\tfrac{(1+\nu)\sigma}{\Lo\ov\alpha^{1+\nu}}\big)^{\tfrac{\nu}{1+\nu}} k^{-\tfrac{\nu^2}{\big(\vartheta_\Omega (1+\nu)-\nu\big)(1+\nu)}},
        \end{equation}
        for $k\in\N_{0}$ sufficiently large.
    \end{enumerate}
\end{thm}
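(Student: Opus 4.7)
The plan is to reduce both assertions to an application of Fact~\ref{fac:convRate1} applied to the sequence $s_k := f(x^k)-f^*$, exploiting the one-step estimate already proved inside Lemma~\ref{lem:Bound-KL}.

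First I would combine the sufficient-decrease inequality \eqref{eq:squarefk1fk-2} of Theorem~\ref{thm:GlobconvSGA}~\ref{thm:GlobconvSGA-1} with the uniform KL estimate \eqref{eq:KL-Local-Global} provided by Theorem~\ref{thm:KL}~\ref{thm:KL-2} on the compact set $\Omega$. Raising the intermediate inequality \eqref{eq:Ineq-fun-SGA-KL} of Lemma~\ref{lem:Bound-KL} to the power $(1+\nu)/\nu$ yields the recursion
\[
s_k^{\tfrac{\vartheta_\Omega(1+\nu)}{\nu}} \;\le\; \beta\,(s_k - s_{k+1}), \qquad
\beta := \rho^{\tfrac{1+\nu}{\nu}}\cdot\tfrac{1+\nu}{\Lo\ov\alpha^{1+\nu}},
\]
which is precisely the hypothesis of Fact~\ref{fac:convRate1} with $\theta = \vartheta_\Omega(1+\nu)/\nu$. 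Since $\seq{x^k}\subseteq\Omega$ and $s_k\to 0$ by Theorem~\ref{thm:GlobconvSGA}~\ref{thm:GlobconvSGA-3}, the preconditions of the Fact are satisfied.

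For assertion \ref{thm:linConv-Fun-SGA-KL-1}, the equality $\vartheta_\Omega = \nu/(1+\nu)$ forces $\theta=1$, so Fact~\ref{fac:convRate1}~(b) delivers Q-linear decay $s_{k+1}\le (1-1/\beta)s_k$. A direct substitution, using $1-\vartheta_\Omega = 1/(1+\nu)$, identifies $1/\beta$ with the claimed constant $q_2 = (1-\vartheta_\Omega)\Lo\ov\alpha^{1/(1-\vartheta_\Omega)}\rho^{-1/\vartheta_\Omega}$, giving \eqref{eq:Lin-Fun-Inqu}. For the gradient bound \eqref{eq:Lin-Grad-Inqu} I would isolate $\|\nabla f(x^k)\|^{(1+\nu)/\nu}$ in \eqref{eq:squarefk1fk-2}, upper bound $f(x^k)-f(x^{k+1})$ by $s_k$, and then take the $\nu/(1+\nu)$-th power; since $\nu/(1+\nu)=\vartheta_\Omega$, plugging in the Q-linear bound on $s_k$ and rewriting $(1+\nu)/(\Lo\ov\alpha^{1+\nu})$ in terms of $(1-\vartheta_\Omega)$ and $1/(1-\vartheta_\Omega)$ yields exactly \eqref{eq:Lin-Grad-Inqu}.

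For assertion \ref{thm:linConv-Fun-SGA-KL-2}, the assumption $\vartheta_\Omega > \nu/(1+\nu)$ gives $\theta>1$, and Fact~\ref{fac:convRate1}~(c) produces $s_k\le \sigma k^{-1/(\theta-1)}$ for some $\sigma>0$ and all sufficiently large $k$. A routine computation shows $1/(\theta-1) = \nu/(\vartheta_\Omega(1+\nu)-\nu)$, establishing \eqref{eq:SubLin-Fun-Inqu}. The gradient bound \eqref{eq:SubLin-Grad-Inqu} then follows from the same inversion of \eqref{eq:squarefk1fk-2} as above, raising the polynomial bound on $s_k$ to the $\nu/(1+\nu)$-th power.

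The only genuine subtlety is the bookkeeping in Case~1: one must verify that $q_2\in(0,1)$ so that the Q-linear statement is meaningful, and that the constants on the right-hand side of \eqref{eq:Lin-Grad-Inqu} are indeed the prescribed ones after rewriting $\nu$ in terms of $\vartheta_\Omega$. All the analytical content is already encapsulated in Lemma~\ref{lem:Bound-KL} and Fact~\ref{fac:convRate1}; the remainder is algebraic normalization of constants.
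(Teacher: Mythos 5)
Your proposal is correct and follows essentially the same route as the paper: both derive the recursion $s_k^{\vartheta_\Omega(1+\nu)/\nu}\le\beta(s_k-s_{k+1})$ from inequality \eqref{eq:Ineq-fun-SGA-KL}, invoke Fact~\ref{fac:convRate1} with $\theta=\vartheta_\Omega(1+\nu)/\nu$ and $\beta=\rho^{(1+\nu)/\nu}(1+\nu)/(\Lo\ov\alpha^{1+\nu})$, and then obtain the gradient bounds by inverting \eqref{eq:squarefk1fk-2} and raising the function-value rate to the power $\nu/(1+\nu)=\vartheta_\Omega$. Your identification of $1/\beta$ with $q_2$ and of $1/(\theta-1)$ with $\nu/(\vartheta_\Omega(1+\nu)-\nu)$ matches the paper's constants exactly.
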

\begin{proof}
    Employing inequality \eqref{eq:Ineq-fun-SGA-KL}, we obtain
    \begin{align*}
         \big(f(x^k)-f^*\big)^{\tfrac{\vartheta_{\Omega}(1+\nu)}{\nu}} \le \tfrac{\rho^{\tfrac{1+\nu}{\nu}}(1+\nu)}{\Lo\ov\alpha^{1+\nu}}\big(f(x^k)-f(x^{k+1})\big) =\tfrac{\rho^{\tfrac{1+\nu}{\nu}}(1+\nu)}{\Lo\ov\alpha^{1+\nu}}\Big(\big(f(x^k)-f^*\big)- \big(f(x^{k+1})-f^*\big)\Big), \quad\quad \forall k\ge 0,
    \end{align*}
    which leads to a sequence of the form \eqref{eq:sk} with $s^k:=f(x^k)-f^*$, $\theta:=\tfrac{\vartheta_{\Omega}(1+\nu)}{\nu}$, and $\beta:= \tfrac{\rho^{\tfrac{1+\nu}{\nu}}(1+\nu)}{\Lo\ov\alpha^{1+\nu}}$. Applying Fact~\ref{fac:convRate1} to this setup yields the convergence rates \eqref{eq:Lin-Fun-Inqu} and \eqref{eq:SubLin-Fun-Inqu}. Moreover, according to Theorem \ref{thm:GlobconvSGA}~\ref{thm:GlobconvSGA-1} (inequality \eqref{eq:squarefk1fk-2}), it holds that
    \begin{equation*}\label{eq:Upper-Grad}
        \|\nabla f(x^k)\| \le \big(\tfrac{1+\nu}{\Lo\ov\alpha^{1+\nu}}\big)^{\tfrac{\nu}{1+\nu}} \big(f(x^k)-f(x^{k+1})\big)^{\tfrac{\nu}{1+\nu}}\le \big(\tfrac{1+\nu}{\Lo\ov\alpha^{1+\nu}}\big)^{\tfrac{\nu}{1+\nu}} \big(f(x^k)-f^*\big)^{\tfrac{\nu}{1+\nu}}, \quad\quad \forall k\in \N_0.
    \end{equation*}
    Thus, the convergence rates \eqref{eq:Lin-Grad-Inqu} and \eqref{eq:SubLin-Grad-Inqu} follow from \eqref{eq:Lin-Fun-Inqu} and \eqref{eq:SubLin-Fun-Inqu} together with the former inequality, adjusting the result.\qed
\end{proof}

Theorem \ref{thm:linConv-Itr-SGA-KL} presents the linear convergence of iterations under KL property.

\begin{thm}[\textbf{Linear convergence rate of iterations under the KL inequality}]
\label{thm:linConv-Itr-SGA-KL}
    Let Assumption \ref{assumptionSG} hold and 
    let the sequence $\seq{x^k}$ be generated by Algorithm~\ref{alg:SGA} with dynamic or constant step-size $\alpha_k \in [\ov\alpha , \tfrac{2\nu}{(1+\nu)\Lo^{\nicefrac{1}{\nu}}}]$ where $0<\ov \alpha<\tfrac{2\nu}{(1+\nu)\Lo^{\nicefrac{1}{\nu}}}$.
    If $f$ is satisfied the KL inequality, the following assertions hold:
    \begin{enumerate}
        \item\label{thm:linConv-Itr-SGA-KL-1}
            If $\vartheta_\Omega= \tfrac{\nu}{1+\nu}$, the sequence $\seq{\|x^k-x^*\|}$ converges R-linearly to $0$, i.e.,
        \begin{equation}\label{eq:Lin-Itr-Inqu}
            \|x^k-x^*\| \leq  \tfrac{2\vartheta_\Omega\rho_\Omega\big(f(x^0)-f^*\big)^{1-\vartheta_\Omega}}{(1-\vartheta_\Omega)^2\Lo^{\tfrac{1}{\vartheta_\Omega}}\ov\alpha^{\tfrac{1}{1-\vartheta_\Omega}}} (1-q_2)^{k(1-\vartheta_\Omega)},\quad\quad\forall k\in N_0,
        \end{equation}
        where $q_2:=(1-\vartheta_\Omega)\Lo\ov\alpha^{\tfrac{1}{1-\vartheta_\Omega}}\rho^{-\tfrac{1}{\vartheta_\Omega}}$ and $x^*\in \R^n$ is the limiting point of $\seq{x^k}$;
        \item\label{thm:linConv-Itr-SGA-KL-2}
        If $\vartheta_\Omega > \tfrac{\nu}{1+\nu}$, there exists $\sigma>0$ such that 
        \begin{equation}\label{eq:SubLin-Itr-Inqu}
            \|x^k-x^*\| \leq  \tfrac{2\nu\rho_\Omega\sigma^{1-\vartheta_\Omega}}{(1-\vartheta_\Omega)\Lo^{\tfrac{1+\nu}{\nu}}\ov\alpha^{1+\nu}} k^{-\tfrac{\nu(1-\vartheta_\Omega)}{\vartheta_\Omega (1+\nu)-\nu}},
        \end{equation}
        for $k\in\N_{0}$ sufficiently large.
    \end{enumerate}
\end{thm}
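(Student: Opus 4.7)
The plan is to carry out an Attouch--Bolte--Svaiter style telescoping argument adapted to the local H\"older setting, and then insert the function-value rates already obtained in Theorem~\ref{thm:linConv-Fun-SGA-KL}.

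First, I would assemble three per-iteration inequalities. The update rule gives $\|x^{k+1}-x^k\|=\alpha_k\|\nabla f(x^k)\|^{1/\nu}$. Since the step-size range $[\bar\alpha,\tfrac{2\nu}{(1+\nu)\Lo^{\nicefrac{1}{\nu}}}]$ is contained in $[\bar\alpha,\tfrac{1}{\Lo^{\nicefrac{1}{\nu}}}]$, the sufficient-decrease inequality~\eqref{eq:squarefk1fk-2} applies and yields
\[
\|\nabla f(x^k)\|^{\tfrac{1+\nu}{\nu}}\le\tfrac{1+\nu}{\Lo\bar\alpha^{1+\nu}}\bigl(f(x^k)-f(x^{k+1})\bigr).
\]
Moreover $\seq{x^k}\subseteq\Omega$ by Theorem~\ref{thm:SufDecSGA}~\ref{thm:SufDecSGA-2}, so Theorem~\ref{thm:KL}~\ref{thm:KL-2} transfers KL uniformly to $\Omega$, giving $\|\nabla f(x^k)\|\ge\rho_\Omega^{-1}(f(x^k)-f^*)^{\vartheta_\Omega}$. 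Rewriting $\|x^{k+1}-x^k\|=\alpha_k\|\nabla f(x^k)\|^{\tfrac{1+\nu}{\nu}}/\|\nabla f(x^k)\|$ and plugging the two estimates in produces
\[
\|x^{k+1}-x^k\|\le K\,\frac{f(x^k)-f(x^{k+1})}{(f(x^k)-f^*)^{\vartheta_\Omega}},\qquad K:=\tfrac{2\nu\rho_\Omega}{\Lo^{\nicefrac{1+\nu}{\nu}}\bar\alpha^{1+\nu}}.
\]

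Next I would apply the standard desingularization trick. The concave function $\phi(s):=\tfrac{1}{1-\vartheta_\Omega}s^{1-\vartheta_\Omega}$ on $[0,\infty)$ satisfies $\phi(a)-\phi(b)\ge a^{-\vartheta_\Omega}(a-b)$ whenever $a\ge b\ge 0$, hence
\[
\|x^{k+1}-x^k\|\le K\bigl[\phi\bigl(f(x^k)-f^*\bigr)-\phi\bigl(f(x^{k+1})-f^*\bigr)\bigr].
\]
Telescoping from $j=k$ to $j=N$ gives $\sum_{j=k}^{N}\|x^{j+1}-x^j\|\le K\phi(f(x^k)-f^*)$. Since $f(x^k)\to f^*$ by Theorem~\ref{thm:GlobconvSGA}~\ref{thm:GlobconvSGA-3}, the full series $\sum_{j}\|x^{j+1}-x^j\|$ converges, so $\seq{x^k}$ is Cauchy and converges to some point $x^*$, which is optimal again by Theorem~\ref{thm:GlobconvSGA}~\ref{thm:GlobconvSGA-3}. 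Passing $N\to\infty$ yields
\[
\|x^k-x^*\|\le\sum_{j=k}^{\infty}\|x^{j+1}-x^j\|\le\tfrac{K}{1-\vartheta_\Omega}\bigl(f(x^k)-f^*\bigr)^{1-\vartheta_\Omega}.
\]

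Finally, I would insert the function-value rates from Theorem~\ref{thm:linConv-Fun-SGA-KL}. In case~\ref{thm:linConv-Itr-SGA-KL-1}, iterating~\eqref{eq:Lin-Fun-Inqu} gives $(f(x^k)-f^*)^{1-\vartheta_\Omega}\le(1-q_2)^{k(1-\vartheta_\Omega)}(f(x^0)-f^*)^{1-\vartheta_\Omega}$; substituting $\vartheta_\Omega=\tfrac{\nu}{1+\nu}$ into $K/(1-\vartheta_\Omega)$ reduces the prefactor to exactly the constant in~\eqref{eq:Lin-Itr-Inqu}. In case~\ref{thm:linConv-Itr-SGA-KL-2}, inserting~\eqref{eq:SubLin-Fun-Inqu} yields $(f(x^k)-f^*)^{1-\vartheta_\Omega}\le\sigma^{1-\vartheta_\Omega}k^{-\nu(1-\vartheta_\Omega)/(\vartheta_\Omega(1+\nu)-\nu)}$, matching~\eqref{eq:SubLin-Itr-Inqu}. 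The only genuine difficulty is constant bookkeeping, i.e.\ tracking $\bar\alpha,\Lo,\rho_\Omega,\vartheta_\Omega,\nu$ through the estimate in Step~1 so that the prefactors in the theorem statement come out as written. A conceptual point worth flagging is that in case~\ref{thm:linConv-Itr-SGA-KL-2} one cannot derive the iterate bound by term-by-term summation of the gradient rate~\eqref{eq:SubLin-Grad-Inqu}, which fails to be summable for $\vartheta_\Omega$ close to $1$; the desingularization by $\phi$ is therefore essential, not merely cosmetic.
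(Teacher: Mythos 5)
Your proposal is correct and follows essentially the same route as the paper: both arguments combine the sufficient-decrease bound \eqref{eq:squarefk1fk-2}, the uniform KL inequality on $\Omega$, and the concavity of $t\mapsto t^{1-\vartheta_\Omega}$ to bound each $\|x^{k+1}-x^k\|=\alpha_k\|\nabla f(x^k)\|^{\nicefrac{1}{\nu}}$ by a telescoping difference, then sum the tail to get $\|x^k-x^*\|\le\tfrac{2\nu\rho_\Omega}{(1-\vartheta_\Omega)\Lo^{\nicefrac{(1+\nu)}{\nu}}\ov\alpha^{1+\nu}}(f(x^k)-f^*)^{1-\vartheta_\Omega}$ and insert the rates of Theorem~\ref{thm:linConv-Fun-SGA-KL}. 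The constants match the paper's exactly (the only slip is a citation: $\seq{x^k}\subseteq\Lambda(x^0)\subseteq\Omega$ comes from Theorem~\ref{thm:SufDecSGA}~\ref{thm:SufDecSGA-1}, not \ref{thm:SufDecSGA-2}).
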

\begin{proof}
It follows from concavity of function $t \mapsto t^{1-\vartheta_\Omega}$ and inequality \eqref{eq:squarefk1fk-2} that
\begin{align*}
    \big(f(x^{k})-f^* \big)^{1-\vartheta_\Omega} - \big( f(x^{k+1})-f^*\big)^{1-\vartheta_\Omega} &\ge (1-\vartheta_\Omega) \big(f(x^{k})-f^* \big)^{-\vartheta_\Omega} \big(f(x^{k}) - f(x^{k+1}) \big)\\
    &\ge \tfrac{(1-\vartheta_\Omega)\Lo\ov\alpha^{1+\nu} }{1+\nu} \big(f(x^{k})-f^* \big)^{-\vartheta_\Omega} \|\nabla f(x^k)\|^{\tfrac{1+\nu}{\nu}}.
\end{align*}
Applying KL inequality the former inequality comes to
\[
\big(f(x^{k})-f^* \big)^{1-\vartheta_\Omega} - \big( f(x^{k+1})-f^*\big)^{1-\vartheta_\Omega} \ge \tfrac{(1-\vartheta_\Omega)\Lo\ov\alpha^{1+\nu} }{(1+\nu)\rho_\Omega} \|\nabla f(x^k)\|^{\tfrac{1}{\nu}}, \quad\quad\forall k\in\N_0. 
\]
Using this inequality, we obtain
\begin{align*}\label{eq:Upper-Itr}
    \|x^k-x^*\| &\le \sum_{i\ge k} \|x^{i}-x^{i+1}\| = \sum_{i\ge k} \alpha_i\|\nabla f(x^i)\|^{\tfrac{1}{\nu}} \le \tfrac{2\nu}{(1+\nu)\Lo^{\nicefrac{1}{\nu}}}\sum_{i\ge k} \|\nabla f(x^i)\|^{\tfrac{1}{\nu}}\\
    &\le \tfrac{2\nu}{(1+\nu)\Lo^{\nicefrac{1}{\nu}}} \tfrac{(1+\nu)\rho_\Omega}{(1-\vartheta_\Omega)\Lo\ov\alpha^{1+\nu}} \sum_{i\ge k}\Big(\big(f(x^i)-f^*\big)^{1-\vartheta_\Omega}-\big(f(x^{i+1})-f^*\big)^{1-\vartheta_\Omega}\Big)\\
    &= \tfrac{2\nu\rho_\Omega}{(1-\vartheta_\Omega)\Lo^{\tfrac{1+\nu}{\nu}}\ov\alpha^{1+\nu}} \big(f(x^k)-f^{*}\big)^{1-\vartheta_\Omega}, \quad\quad \forall k\in N_0,
\end{align*}
ensuring \eqref{eq:Lin-Itr-Inqu} and \eqref{eq:SubLin-Itr-Inqu} employing \eqref{eq:Lin-Fun-Inqu} and \eqref{eq:SubLin-Fun-Inqu}. This completes the proof.\qed
\end{proof}

The SGA exhibits notable flexibility in accommodating various smoothness settings. When the gradient $\nabla f$ is locally Lipschitz, it is also locally $\nu$-H\"{o}lder with any $\nu\in (0,1)$. Consequently, if $f$ satisfies KL inequality, then by setting $\nu:=\tfrac{\vartheta_\Omega}{1-\vartheta_\Omega}$, the method attains a linear rate of convergence. Therefore, for functions satisfying the KL inequality with locally Lipschitz continuous gradients, the SGA achieves linear convergence for any KL exponent $\vartheta \in (0,1)$. 

Theorem \ref{thm:linConv-Com-SGA-KL} analyzes the complexity of the SGA under local $\nu$-H\"{o}lder continuity of $\nabla f$ and KL inequality.

\begin{thm}[\textbf{Complexity analysis under the KL inequality}]
\label{thm:linConv-Com-SGA-KL}
    Let Assumption \ref{assumptionSG} hold and 
    let the sequence $\seq{x^k}$ be generated by Algorithm~\ref{alg:SGA} with dynamic or constant step-size $\alpha_k \in [\ov\alpha , \tfrac{1}{\Lo}]$ where $0<\ov \alpha<\tfrac{1}{\Lo}$.
    If $f$ is satisfied the KL inequality with exponent $\vartheta_\Omega= \tfrac{\nu}{1+\nu}$, the following assertions hold:
    \begin{enumerate}
        \item\label{thm:linConv-Com-SGA-KL-1}
            The number of iterations to guarantee $f(x^k)-f^*\leq \varepsilon$, $\Nf(\varepsilon)$, satisfies $$\Nf(\varepsilon)\le \MN\big(f(x^{0})-f^*,1-q_2\big);$$
        \item\label{thm:linConv-Com-SGA-KL-2}
            The number of iterations to guarantee $\|\nabla f(x^k)\|\leq \varepsilon$, $\Nnf(\varepsilon)$, satisfies $$\Nnf(\varepsilon)\le \MN\bigg(\Big(\tfrac{f(x^{0})-f^*}{(1-\vartheta_\Omega)\Lo\ov\alpha^{\tfrac{1}{1-\vartheta_\Omega}}}\Big)^{\vartheta_\Omega},(1-q_2)^{\vartheta_\Omega}\bigg);$$
        \item\label{thm:linConv-Com-SGA-KL-3}
            If $\alpha_k \in \big[\ov\alpha , 2\vartheta_\Omega \Lo^{-\tfrac{1-\vartheta_\Omega}{\vartheta_\Omega}}\big]$ where $0<\ov \alpha<2\vartheta_\Omega \Lo^{-\tfrac{1-\vartheta_\Omega}{\vartheta_\Omega}}$, the number of iterations to guarantee $\|x^k-x^*\|\leq \varepsilon$, $\Nx(\varepsilon)$, satisfies $$\Nx(\varepsilon)\le \MN\bigg(\tfrac{2\vartheta_\Omega\rho_\Omega\big(f(x^0)-f^*\big)^{1-\vartheta_\Omega}}{(1-\vartheta_\Omega)^2\Lo^{\tfrac{1}{\vartheta_\Omega}}\ov\alpha^{\tfrac{1}{1-\vartheta_\Omega}}},(1-q_2)^{1-\vartheta_\Omega}\bigg);$$
    \end{enumerate}
    where $x^*$ is an optimal solution, $q_2:=(1-\vartheta_\Omega)\Lo\ov\alpha^{\tfrac{1}{1-\vartheta_\Omega}}\rho^{-\tfrac{1}{\vartheta_\Omega}}$.
\end{thm}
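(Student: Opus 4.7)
The plan is to derive each of the three complexity bounds as direct consequences of the linear convergence rates previously established in Theorems~\ref{thm:linConv-Fun-SGA-KL} and~\ref{thm:linConv-Itr-SGA-KL}, by inverting the respective geometric decay estimates with respect to the accuracy parameter $\varepsilon$. Since $\vartheta_\Omega = \tfrac{\nu}{1+\nu}$ is assumed, all three rate inequalities \eqref{eq:Lin-Fun-Inqu}, \eqref{eq:Lin-Grad-Inqu}, and \eqref{eq:Lin-Itr-Inqu} are available in Q-linear or R-linear form with common base $1-q_2$.

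For Assertion~\ref{thm:linConv-Com-SGA-KL-1}, I would iterate \eqref{eq:Lin-Fun-Inqu} to obtain $f(x^k)-f^* \le (1-q_2)^k(f(x^0)-f^*)$, set this upper bound at most $\varepsilon$, take logarithms, and solve the resulting inequality for $k$, which yields exactly $k \geq [\log(\varepsilon^{-1})+\log(f(x^0)-f^*)]/\log((1-q_2)^{-1})$; adding one and taking the ceiling produces $\MN(f(x^0)-f^*,\,1-q_2)$, so any $k$ beyond this bound guarantees $f(x^k)-f^*\le\varepsilon$. For Assertion~\ref{thm:linConv-Com-SGA-KL-2}, I would apply the same procedure to \eqref{eq:Lin-Grad-Inqu}: the prefactor $\bigl(\tfrac{f(x^0)-f^*}{(1-\vartheta_\Omega)\Lo\ov\alpha^{1/(1-\vartheta_\Omega)}}\bigr)^{\vartheta_\Omega}$ plays the role of the first argument of $\MN$, and the contraction factor $(1-q_2)^{\vartheta_\Omega}$ plays the role of the second. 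For Assertion~\ref{thm:linConv-Com-SGA-KL-3}, I note that the more restrictive step-size window $\alpha_k\in[\ov\alpha,\,2\vartheta_\Omega\Lo^{-(1-\vartheta_\Omega)/\vartheta_\Omega}]$ is precisely what enables \eqref{eq:Lin-Itr-Inqu} from Theorem~\ref{thm:linConv-Itr-SGA-KL}; plugging that R-linear estimate into the same log-inversion scheme recovers the stated bound with base $(1-q_2)^{1-\vartheta_\Omega}$.

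All three computations are template applications of the elementary fact that if $a_k \le C \tau^k$ with $\tau\in(0,1)$, then $a_k\le\varepsilon$ whenever $k \ge [\log(C/\varepsilon)]/\log(\tau^{-1})$. The only care needed is to verify the correct matching of prefactors and the correct effective contraction exponent in each of the three cases, which is essentially bookkeeping. I do not anticipate any real obstacle; indeed, the proof can be compressed into a single line per assertion by substituting the appropriate linear-rate inequality into the definition of $\MN$ and invoking monotonicity of $\log$, exactly as done in the proof of Theorem~\ref{cor:ComAnaSGA-LSC} via its Assertion~\ref{cor:ComAnaSGA-LSC-1} template.
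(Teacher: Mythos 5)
Your proposal is correct and follows essentially the same route as the paper, whose proof simply states that the bounds are established analogously to Theorem~\ref{cor:ComAnaSGA-LSC}, i.e., by substituting $k=\MN(\cdot,\cdot)$ into the linear-rate inequalities \eqref{eq:Lin-Fun-Inqu}, \eqref{eq:Lin-Grad-Inqu}, and \eqref{eq:Lin-Itr-Inqu} and inverting the geometric decay. Your matching of prefactors and contraction exponents in each of the three cases, including the tighter step-size window required to invoke Theorem~\ref{thm:linConv-Itr-SGA-KL} in Assertion~\ref{thm:linConv-Com-SGA-KL-3}, is exactly what the paper intends.
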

\begin{proof}
    The complexity bounds are established analogously to that in Theorem~\ref{cor:ComAnaSGA-LSC}.\qed
\end{proof}

\section{AdaSGA (adaptive scaled gradient algorithm)} \label{sec:AdaSGA}
Let $\func{f}{\R^n}{\R}$ be a smooth convex function with a locally Lipschitz gradient (Assumption \ref{assumptionSG}). Considering a special class of these functions satisfying level set boundedness, we have already studied the scaled gradient method (SGA), which may lead to a slow convergence rate if the Lipschitz modulus $\Lo>0$ is big. Moreover, it is not easy to compute this Lipschitz modulus in practice.

\begin{rem}\label{rem:adaSG}
    Given an initial point $x^0\in\R^n$ and parameters $\gamma_0, \alpha_0, \theta_0 >0$, let us consider the compact convex set $\W:=\overline{B}(0;R)$ where $R>3\sqrt{\eta}+\dist(x^0;\X^*)+\|x^0\|$ and
\begin{equation*}\label{eq:eta}
    \eta:= \dist^2(x^0;\X^*)+2\alpha_0^2 \gamma_0^2\|\nabla f(x^0)\|^2+2\alpha_0\gamma_0\theta_0\big(f(x^0)-f^*\big).
\end{equation*}
Inasmuch as $\nabla f$ is locally Lipschitz, the function $f$ is $L_\W$-smooth on $\W=\overline{B}(0;R)$.\qed
\end{rem}

In this section, we investigate an adaptive variant of SGA (called AdaSGA; see Algorithm~\ref{alg:AdaSGDA}) that does not need to know the modulus $L_\W$ for the implementation and may attain a much larger step-size, which consequently leads to a faster convergence to a solution of \eqref{eq:p}. Next, for a generated sequence $\seq{x^k}$ by AdaSGA, we analyze the convergence of the sequences the iterates distance $\seq{\|x^k - x^*\|}$, the function value gap $\seq{f(x^k)-f^*}$, and the gradient norm $\seq{\|\nabla f(x^k)\|}$.
To do so, we note that
\begin{align}\label{eq:Lipk}
    L_k:=\tfrac{\|\nabla f(x^k)-\nabla f(x^{k-1})\|}{\|x^k-x^{k-1}\|},
\end{align}
is a local approximation of the Lipschitz modulus $L_\W$ that satisfies $L_k\leq L_\W$. We use this $L_k$ to approximate $L_\W$ that will lead to possible larger step-sizes given by
\begin{equation}\label{eq:alphak}
    \alpha_k:=\min\set{\tfrac{\alpha_{k-1} \gamma_{k-1}}{\gamma_k}\sqrt{2(1-\omega^2)+\tfrac{\theta_{k-1}}{\tau}}, \tfrac{\omega}{\gamma_kL_k}},\quad\quad\forall k\in\N,
\end{equation}
where
\begin{equation}\label{eq:thetak}
    \theta_{k}:=\tfrac{\alpha_{k}\gamma_{k}}{\alpha_{k-1}\gamma_{k-1}},\quad\quad\forall k\in\N,
\end{equation}
$\alpha_0>0$, $\theta_0 >0$, $0<\gamma_{\min}\leq\gamma_k\leq \gamma_{\max}$, $\tau\ge 1$, and $0<\omega\le\tfrac{1}{\sqrt{2}}$.

\vspace{4mm}
\RestyleAlgo{boxruled}
\begin{algorithm}[H]
\DontPrintSemicolon
\KwIn{$x^0\in\R^ n$,~ $0<\gamma_{\min}\leq\gamma_k\leq \gamma_{\max}$,~ $\alpha_0>0$,~$\theta_0 >0$, $\tau\ge 1$, $0<\omega\le\tfrac{1}{\sqrt{2}}$;}
\Begin{
    $k:=0$;
    
    \While{
        the stopping criteria does not hold
    }{
    
    Set $x^{k+1}=x^k-\alpha_k \gamma_k \nabla f(x^k)$;\\[1mm]
    Compute $\alpha_{k+1}$ and $\theta_{k+1}$ by \eqref{eq:alphak} and \eqref{eq:thetak}, respectively, and set $k=k+1$;
    }
    $x^{\mathrm{best}}=x^k$;\;
}
\KwOut{$x^{\mathrm{best}}$} 
\caption{AdaSGA: Adaptive Scaled Gradient Algorithm \label{alg:AdaSGDA}}
\end{algorithm}

\vspace{4mm}
Let us begin with the subsequent result establishing the Lyapunov decrease condition.

\begin{thm}[\textbf{Lyapunov decrease condition}]
\label{thm:Suf-Dec-Lya}
    Let $x\in \R^n$. If the sequence $\seq{x^k}$ is generated by Algorithm~\ref{alg:AdaSGDA}, one has
    \begin{equation}\label{eq:normXk1XsUpBoundomega}
    \begin{array}{ll}
         \|x^{k+1}-x\|^2&+\tfrac{\omega^2}{1-\omega^2} \|x^{k+1}-x^k\|^2+2\alpha_k \gamma_k \left(1+\tfrac{\theta_{k}}{2(1-\omega^2)}\right)\big(f(x^k)-f(x)\big)\\[2mm]
        &\leq \|x^k-x\|^2+\tfrac{\omega^2}{1-\omega^2} \|x^k-x^{k-1}\|^2+\tfrac{\alpha_k\gamma_k\theta_{k}}{1-\omega^2} \left(f(x^{k-1})-f(x)\right), \quad \quad \forall k\in\N.
    \end{array}
    \end{equation}
    In addition, the Lyapunov function $\func{\eL}{\R^n\times\R^n}{\R}$ given by
    \begin{align*}
        \eL(x^{k},x^{k-1}):=\|x^{k}-x\|^2+ \tfrac{\omega^2}{1-\omega^2}\|x^{k}-x^{k-1}\|^2+\tfrac{\alpha_k\gamma_k\theta_{k}}{1-\omega^2} \big(f(x^{k-1})-f(x)\big),
    \end{align*}
    is nonincreasing, i.e., $\eL(x^{k+1},x^k) \leq \eL(x^k,x^{k-1})$.
\end{thm}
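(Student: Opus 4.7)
The plan is to derive two intermediate inequalities and combine them linearly. The first is the Euclidean descent bound
\begin{equation*}
\|x^{k+1}-x\|^2 + 2\alpha_k\gamma_k\bigl(f(x^k)-f(x)\bigr) \le \|x^k-x\|^2 + \|x^{k+1}-x^k\|^2,
\end{equation*}
which follows from the expansion $\|x^{k+1}-x\|^2 = \|x^k-x\|^2 + 2\langle x^{k+1}-x^k,\, x^k-x\rangle + \|x^{k+1}-x^k\|^2$, the substitution $x^{k+1}-x^k=-\alpha_k\gamma_k\nabla f(x^k)$, and the convexity inequality $\langle \nabla f(x^k), x^k-x\rangle \ge f(x^k)-f(x)$.

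The crux of the proof is the auxiliary estimate
\begin{equation*}
\|x^{k+1}-x^k\|^2 + \alpha_k\gamma_k\theta_k\bigl(f(x^k)-f(x^{k-1})\bigr) \le \omega^2\|x^k-x^{k-1}\|^2.
\end{equation*}
To obtain it I would use the two-term identity $x^{k+1}-x^k = \theta_k(x^k-x^{k-1}) - \alpha_k\gamma_k\bigl(\nabla f(x^k)-\nabla f(x^{k-1})\bigr)$, which arises from splitting $\nabla f(x^k) = \nabla f(x^{k-1}) + (\nabla f(x^k)-\nabla f(x^{k-1}))$ together with $x^k-x^{k-1} = -\alpha_{k-1}\gamma_{k-1}\nabla f(x^{k-1})$ and $\theta_k = \alpha_k\gamma_k/(\alpha_{k-1}\gamma_{k-1})$. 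Squaring this identity yields $\|x^{k+1}-x^k\|^2 = \theta_k^2\|x^k-x^{k-1}\|^2 - 2\theta_k\alpha_k\gamma_k\langle x^k-x^{k-1},\, \nabla f(x^k)-\nabla f(x^{k-1})\rangle + \alpha_k^2\gamma_k^2\|\nabla f(x^k)-\nabla f(x^{k-1})\|^2$. In parallel, convexity of $f$ gives $f(x^k)-f(x^{k-1}) \le \langle \nabla f(x^k), x^k-x^{k-1}\rangle$, and expanding $\nabla f(x^k)$ as above shows that $\alpha_k\gamma_k\theta_k(f(x^k)-f(x^{k-1}))$ is majorized by $-\theta_k^2\|x^k-x^{k-1}\|^2 + \theta_k\alpha_k\gamma_k\langle \nabla f(x^k)-\nabla f(x^{k-1}), x^k-x^{k-1}\rangle$. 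Adding the two estimates, the $\theta_k^2\|x^k-x^{k-1}\|^2$ contributions cancel and the cross terms combine, leaving $-\theta_k\alpha_k\gamma_k\langle \nabla f(x^k)-\nabla f(x^{k-1}), x^k-x^{k-1}\rangle + \alpha_k^2\gamma_k^2\|\nabla f(x^k)-\nabla f(x^{k-1})\|^2$. The first summand is non-positive by monotonicity of $\nabla f$ (a consequence of convexity), and the second is bounded by $\omega^2\|x^k-x^{k-1}\|^2$ via the definition of $L_k$ in \eqref{eq:Lipk} and the step-size bound $\alpha_k\gamma_k L_k \le \omega$ from the second branch of \eqref{eq:alphak}. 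The main obstacle here is precisely locating this decomposition so that the $\theta_k^2$ terms cancel exactly; everything else is routine algebra.

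Adding $\tfrac{1}{1-\omega^2}$ times the auxiliary estimate to the Euclidean descent bound, the $\|x^{k+1}-x^k\|^2$ contributions combine to $\tfrac{\omega^2}{1-\omega^2}\|x^{k+1}-x^k\|^2$ on the left, and writing $f(x^k)-f(x^{k-1}) = (f(x^k)-f(x)) - (f(x^{k-1})-f(x))$ moves $\tfrac{\alpha_k\gamma_k\theta_k}{1-\omega^2}(f(x^k)-f(x))$ to the left and produces $\tfrac{\alpha_k\gamma_k\theta_k}{1-\omega^2}(f(x^{k-1})-f(x))$ on the right, which is exactly \eqref{eq:normXk1XsUpBoundomega}.

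Finally, for the Lyapunov monotonicity $\eL(x^{k+1},x^k) \le \eL(x^k,x^{k-1})$ (taking $x=x^*\in \X^*$ so that $f(x^k)-f(x)\ge 0$), comparison of the coefficient of $(f(x^k)-f(x))$ on the two sides of \eqref{eq:normXk1XsUpBoundomega} reduces the claim to $\alpha_{k+1}\gamma_{k+1}\theta_{k+1} \le \alpha_k\gamma_k\bigl(2(1-\omega^2)+\theta_k\bigr)$. Using $\theta_{k+1}=\alpha_{k+1}\gamma_{k+1}/(\alpha_k\gamma_k)$, this becomes $(\alpha_{k+1}\gamma_{k+1})^2 \le (\alpha_k\gamma_k)^2\bigl(2(1-\omega^2)+\theta_k\bigr)$, which is immediate from the first branch of \eqref{eq:alphak} together with $\tau\ge 1$.
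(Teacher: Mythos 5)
Your proof is correct and follows essentially the same route as the paper: the same Euclidean expansion plus convexity for the first bound, the same key estimate $\|x^{k+1}-x^k\|^2 \le \omega^2\|x^k-x^{k-1}\|^2 + \alpha_k\gamma_k\theta_k\big(f(x^{k-1})-f(x^k)\big)$ obtained from gradient monotonicity, the definition of $L_k$, and the step-size cap $\alpha_k\gamma_k L_k\le\omega$ (you derive it via the displacement identity, the paper via expanding $\|\nabla f(x^k)\|^2$, which is algebraically equivalent), and the same coefficient comparison for the Lyapunov monotonicity. Your explicit restriction to $x=x^*$ so that $f(x^k)-f(x)\ge 0$ in the last step is in fact slightly more careful than the paper, which states the monotonicity for arbitrary $x$ but implicitly needs this sign condition.
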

\begin{proof}
    It follows from the convexity of $f$ that
    \begin{align}
        \|x^{k+1}-x\|^2
            &= \|x^k-x\|^2+2\alpha_k\gamma_k\innprod{\nabla f(x^k)}{x-x^k}+\|x^{k+1}-x^k\|^2\label{eq:normXk1XsUpBound11100}\\
            &\leq \|x^k-x\|^2+2\alpha_k \gamma_k \big(f(x)-f(x^k)\big)+\|x^{k+1}-x^k\|^2, \quad\quad\forall k\in \N_0.\label{eq:normXk1XsUpBound111}
    \end{align}
    Let us consider $k\in \N$. In each case, we find an upper bound for the term $\|x^{k+1}-x^k\|^2$ to reach out the desired inequality.
    It is clear that
    \[
        \|\nabla f(x^{k-1})\|^2 -\innprod{\nabla f(x^k)}{\nabla f(x^{k-1})}=\tfrac{1}{\alpha_{k-1}\gamma_{k-1}} \innprod{\nabla f(x^{k-1})-\nabla f(x^k)}{x^{k-1}-x^k}\geq 0.
    \]
    Using this inequality, the step-size condition $\alpha_k\gamma_kL_k \leq \omega<1$, and the convexity of $f$, we deduce
    \begin{equation*}
        \begin{split}
            \|x^{k+1}-x^k\|^2 &= \alpha_k^2\gamma_k^2 \|\nabla f(x^k)-\nabla f(x^{k-1})\|^2-\alpha_k^2\gamma_k^2 \|\nabla f(x^{k-1})\|^2+2\alpha_k^2\gamma_k^2\innprod{\nabla f(x^k)}{\nabla f(x^{k-1})}\\
            &\leq \alpha_k^2\gamma_k^2 L_k^2 \|x^k-x^{k-1}\|^2+\alpha_k\gamma_k\theta_{k} \innprod{\nabla f(x^k)}{x^{k-1}-x^k}\\
            &\leq \omega^2 \|x^k-x^{k-1}\|^2+\alpha_k\gamma_k \theta_{k} \left(f(x^{k-1})-f(x^k)\right).
        \end{split}
    \end{equation*}
    which implies
    \begin{equation}\label{eq:normXk1XkUpBound111}
        \begin{split}
            \|x^{k+1}-x^k\|^2 &\leq \tfrac{\omega^2}{1-\omega^2} \|x^k-x^{k-1}\|^2-\tfrac{\omega^2}{1-\omega^2} \|x^{k+1}-x^k\|^2+\tfrac{\alpha_k\gamma_k\theta_{k}}{1-\omega^2} \left(f(x^{k-1})-f(x^k)\right).
        \end{split}
    \end{equation}
    Substituting \eqref{eq:normXk1XkUpBound111} into \eqref{eq:normXk1XsUpBound111}, we come to
    \begin{equation*}
        \begin{split}
            \|x^{k+1}-x\|^2 &\leq \|x^k-x\|^2+2\alpha_k \gamma_k (f(x)-f(x^k))+\tfrac{\omega^2}{1-\omega^2} \|x^k-x^{k-1}\|^2\\
            &~~~-\tfrac{\omega^2}{1-\omega^2} \|x^{k+1}-x^k\|^2+\tfrac{\alpha_k\gamma_k\theta_{k}}{1-\omega^2} \left(f(x^{k-1})-f(x^k)\right),
        \end{split}
    \end{equation*}
    leading to
    \begin{equation*}
        \begin{split}
            \|x^{k+1}-x\|^2&+\tfrac{\omega^2}{1-\omega^2} \|x^{k+1}-x^k\|^2+2\alpha_k \gamma_k \left(1+\tfrac{\theta_{k}}{2(1-\omega^2)}\right)(f(x^k)-f(x))\\
            &\leq \|x^k-x\|^2+\tfrac{\omega^2}{1-\omega^2} \|x^k-x^{k-1}\|^2+\tfrac{\alpha_k\gamma_k\theta_{k}}{1-\omega^2} \left(f(x^{k-1})-f(x)\right).
        \end{split}
    \end{equation*}
    verifying \eqref{eq:normXk1XsUpBoundomega}. Moreover, by combining
    \begin{align*}
        \tfrac{\alpha_{k+1}\gamma_{k+1}\theta_{k+1}}{1-\omega^2} \leq 2\alpha_{k} \gamma_{k} \left(1+\tfrac{\theta_{k}}{2\tau(1-\omega^2)}\right)\leq 2\alpha_{k} \gamma_{k} \left(1+\tfrac{\theta_{k}}{2(1-\omega^2)}\right),
    \end{align*}
    i.e., $\alpha_{k+1}\leq \tfrac{\alpha_{k} \gamma_{k}}{\gamma_{k+1}}\sqrt{2(1-\omega^2)+\frac{\theta_{k}}{\tau}}$ and $\tau\ge 1$, with \eqref{eq:normXk1XsUpBoundomega} ensures the nonincreasing behavior of Lyapunov function and completes the proof.\qed
\end{proof}

By applying the nonincreasing property of the sequence $\seqone{\eL(x^{k},x^{k-1})}$, we proceed to analyze the global convergence.

\begin{thm}[\textbf{Global convergence of AdaSGA}]
\label{thm:Glo-Conv-AdaSGA}
    If the sequence $\seq{x^k}$ is generated by Algorithm~\ref{alg:AdaSGDA}, the following statements hold:
    \begin{enumerate}
        \item\label{thm:Glo-Conv-AdaSGA-1}
            The sequence $\seq{x^k}$ is bounded, i.e., $\seq{x^k}\subseteq \interior\W=B(0;R)$ where $R>\sqrt{\eta}+\dist(x^0;\X^*)+\|x^0\|$ and the constant $\eta$ is defined in Remark~\ref{rem:adaSG};
        \item\label{thm:Glo-Conv-AdaSGA-2}
            We have $\sum_{k\ge 0} \|\nabla f(x^k)\|^2 <\infty$ and $\displaystyle\lim_{k\to\infty} \|\nabla f(x^k)\|=0$;
        \item\label{thm:Glo-Conv-AdaSGA-3}
            The sequence $\seq{x^k}$ converges to an optimal solution $x^*\in \interior\W=B(0;R)$.
    \end{enumerate}
\end{thm}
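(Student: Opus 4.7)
The plan is to exploit the monotone Lyapunov energy $\eL(x^k,x^{k-1})$ from Theorem~\ref{thm:Suf-Dec-Lya}, instantiated throughout at $x := \proj_{\X^*}(x^0)$, and to read off all three assertions from its decay together with the auxiliary recursion~\eqref{eq:normXk1XkUpBound111} and the Fej\'er-type inequality~\eqref{eq:normXk1XsUpBound111}.

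For part~\ref{thm:Glo-Conv-AdaSGA-1}, iterating the monotone decrease yields $\eL(x^k,x^{k-1}) \le \eL(x^1,x^0)$ for all $k\in\N$. I would bound $\eL(x^1,x^0) \le \eta$ directly: expanding $\|x^1-x\|^2$ via the first update together with convexity produces the correction $-2\alpha_0\gamma_0(f(x^0)-f^*) + \alpha_0^2\gamma_0^2\|\nabla f(x^0)\|^2$, and the remaining ``future'' term $\tfrac{\alpha_1\gamma_1\theta_1}{1-\omega^2}(f(x^0)-f^*)$ is controlled through the step-size rule~\eqref{eq:alphak}: since $(\alpha_1\gamma_1)^2 \le \alpha_0^2\gamma_0^2\bigl(2(1-\omega^2) + \theta_0/\tau\bigr)$, one gets $\tfrac{\alpha_1\gamma_1\theta_1}{1-\omega^2} \le 2\alpha_0\gamma_0(1+\theta_0)$, where the last bound uses $\omega\le 1/\sqrt 2$ (so $1-\omega^2 \ge 1/2$) and $\tau \ge 1$. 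Assembling the pieces gives $\eL(x^1,x^0) \le \eta$, hence $\|x^k-x^*\|^2 \le \eta$, and therefore $\|x^k\| \le \sqrt\eta + \dist(x^0;\X^*) + \|x^0\| < R$, so $\seq{x^k}\subset \interior\W$.

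For part~\ref{thm:Glo-Conv-AdaSGA-2}, I would first establish by induction the \emph{a priori} bound $\alpha_k\gamma_k \ge \underline c := \min(\alpha_0\gamma_0,\omega/L_\W) > 0$: the capping branch gives $\alpha_k\gamma_k = \omega/L_k \ge \omega/L_\W$ because $L_k \le L_\W$ on the compact set $\W$, while the adaptive branch gives $\alpha_k\gamma_k = \alpha_{k-1}\gamma_{k-1}\sqrt{2(1-\omega^2)+\theta_{k-1}/\tau} \ge \alpha_{k-1}\gamma_{k-1}$ because $\omega \le 1/\sqrt 2$ forces the square-root factor to be $\ge 1$. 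Next, telescoping the recursion~\eqref{eq:normXk1XkUpBound111}, combining with the Fej\'er-type inequality~\eqref{eq:normXk1XsUpBound111} at $x = x^*$, and using the structural step-size bound $\alpha_{k+1}\gamma_{k+1}\theta_{k+1} \le 2(1-\omega^2)\alpha_k\gamma_k + \alpha_k\gamma_k\theta_k/\tau$ through an Abel rearrangement, one deduces $\sum_k \|x^{k+1}-x^k\|^2 < \infty$. Finally $\|x^{k+1}-x^k\|^2 = \alpha_k^2\gamma_k^2\|\nabla f(x^k)\|^2 \ge \underline c^{\,2}\|\nabla f(x^k)\|^2$ yields both $\sum\|\nabla f(x^k)\|^2 < \infty$ and $\|\nabla f(x^k)\| \to 0$.

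For part~\ref{thm:Glo-Conv-AdaSGA-3}, the Fej\'er-type inequality~\eqref{eq:normXk1XsUpBound111} at $x = x^*\in \X^*$ simplifies (by $f(x^k)\ge f^*$) to $\|x^{k+1}-x^*\|^2 \le \|x^k-x^*\|^2 + \|x^{k+1}-x^k\|^2$, so the summability from part~\ref{thm:Glo-Conv-AdaSGA-2} makes $\{\|x^k-x^*\|\}$ a convergent real sequence for every $x^* \in \X^*$. Compactness of $\W$ produces a cluster point $\bar x$; continuity of $\nabla f$ and $\|\nabla f(x^k)\| \to 0$ give $\nabla f(\bar x) = 0$, so $\bar x \in \X^*$ by Fact~\ref{fac:optimality}. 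Specializing $x^* = \bar x$, the convergent sequence $\{\|x^k - \bar x\|\}$ has $0$ as a subsequential limit, so $x^k \to \bar x$, and part~\ref{thm:Glo-Conv-AdaSGA-1} places $\bar x$ in $\interior\W$. The main obstacle is the summability step of part~\ref{thm:Glo-Conv-AdaSGA-2}: the telescoped bound $(1-\omega^2)\sum\|x^{k+1}-x^k\|^2 \le \|x^1-x^0\|^2 + \sum_k \alpha_k\gamma_k\theta_k(f(x^{k-1})-f(x^k))$ has a sign-indefinite right-hand tail whose closure requires the careful Abel coupling with the step-size recurrence and the Lyapunov bound, together with the boundary case $\tau = 1$ that eliminates the slack in the Lyapunov decrease.
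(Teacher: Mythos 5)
Your part~\ref{thm:Glo-Conv-AdaSGA-1} matches the paper's argument essentially line for line, and your induction for the lower bound $\alpha_k\gamma_k\ge\min\{\alpha_0\gamma_0,\omega/L_\W\}$ is exactly the paper's. The genuine gap is in part~\ref{thm:Glo-Conv-AdaSGA-2}, and it is the one you flag yourself: the route via $\sum_k\|x^{k+1}-x^k\|^2<\infty$ does not close. Telescoping \eqref{eq:normXk1XkUpBound111} and Abel-summing $\sum_k\alpha_k\gamma_k\theta_k\big(f(x^{k-1})-f(x^k)\big)$ leaves, after using $\alpha_{k+1}\gamma_{k+1}\theta_{k+1}\le 2(1-\omega^2)\alpha_k\gamma_k+\tfrac{\alpha_k\gamma_k\theta_k}{\tau}$, a residual of order $\sum_k\alpha_k\gamma_k\big(f(x^k)-f^*\big)$, and the Lyapunov decrease \eqref{eq:normXk1XsUpBoundomega} provides no summability for this quantity: the slack between the coefficient $2\alpha_k\gamma_k\big(1+\tfrac{\theta_k}{2(1-\omega^2)}\big)$ paid at step $k$ and the coefficient $\tfrac{\alpha_{k+1}\gamma_{k+1}\theta_{k+1}}{1-\omega^2}$ charged at step $k+1$ is exactly $\tfrac{(1-1/\tau)\,\alpha_k\gamma_k\theta_k}{1-\omega^2}$, which vanishes when $\tau=1$, a value the algorithm permits. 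So the estimate you need cannot be recovered from plain convexity plus the step-size recursion alone.

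The paper closes this by strengthening the convexity inequality at $x_*^0=\proj_{\X^*}(x^0)$: after checking that $x_*^0$ and $x_*^0-\tfrac1{L_\W}\big(\nabla f(x_*^0)-\nabla f(x^k)\big)$ lie in $\interior\W$, Proposition~\ref{pro:HSmoothChar}~\ref{pro:HSmoothChar-3} (equivalently Corollary~\ref{cor:HSmoothChar}~\ref{cor:HSmoothChar-5}) gives $f(x_*^0)\ge f(x^k)+\innprod{\nabla f(x^k)}{x_*^0-x^k}+\tfrac1{2L_\W}\|\nabla f(x^k)\|^2$, and feeding this into \eqref{eq:normXk1XsUpBound11100} injects an \emph{extra} negative term $-\tfrac{\alpha_k\gamma_k}{L_\W}\|\nabla f(x^k)\|^2$ into the Lyapunov telescoping, on top of the function-value bookkeeping. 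Summing then yields $\sum_k\tfrac{\alpha_k\gamma_k}{L_\W}\|\nabla f(x^k)\|^2\le\eta$ for every $\tau\ge1$, and the lower bound on $\alpha_k\gamma_k$ finishes part~\ref{thm:Glo-Conv-AdaSGA-2}. This cocoercivity-type lower bound is the missing idea. Your part~\ref{thm:Glo-Conv-AdaSGA-3} is then also incomplete as written, since it consumes the unproven $\sum_k\|x^{k+1}-x^k\|^2<\infty$; the paper instead applies an Opial-type lemma (\cite[Lemma~2]{malitsky2019adaptive}) to the monotone quantity $\|x^k-x\|^2+a_k$ with $a_k:=\tfrac{\omega^2}{1-\omega^2}\|x^k-x^{k-1}\|^2+\tfrac{\alpha_k\gamma_k\theta_k}{1-\omega^2}\big(f(x^{k-1})-f^*\big)\ge0$, which needs only the already-established facts that the iterates are bounded and every cluster point is optimal.
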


\begin{proof}
    Let us assume that $x^0\not\in \X^*$ and consider $x_*^0 := \proj_{\X^*}(x^0)$.\\
    \ref{thm:Glo-Conv-AdaSGA-1}
    Based on Theorem \ref{thm:Suf-Dec-Lya}, the sequence $\seqone{\eL(x^k,x^{k-1})}$ is nonincreasing, i.e.,
    \[\eL(x^{k},x^{k-1}) \leq \eL(x^{k-1},x^{k-2})\leq \ldots \leq \eL(x^1,x^0),\]
    and as a consequence,
    \begin{align*}
        \|x^{k}-x_*^0\|^2\le \|x^1-x_*^0\|^2+\tfrac{\omega^2}{1-\omega^2}\|x^1-x^0\|^2+\tfrac{\alpha_1 \gamma_1\theta_1}{1-\omega^2}\big(f(x^0)-f(x_*^0)\big).
    \end{align*}
    Applying \eqref{eq:normXk1XsUpBound111} with $k=0$ and $x=x_*^0$ yields
    \begin{align}\label{eq:Uperx1x*}
        \|x^1-x_*^0\|^2 \leq \|x^0-x_*^0\|^2-2\alpha_0 \gamma_0 (f(x^0)-f^*)+\|x^1-x^0\|^2.
    \end{align}
    Combining the last two inequalities ensures
    \begin{align}\label{eq:Uperxkx*0}
    \|x^k-x_*^0\|^2\leq \|x^0-x_*^0\|^2+(1+\tfrac{\omega^2}{1-\omega^2})\|x^1-x^0\|^2+(\tfrac{\alpha_1 \gamma_1\theta_1}{1-\omega^2}-2\alpha_0 \gamma_0) (f(x^0)-f^*)\le\eta,
    \end{align}
    for the constant $\eta$ defined in Remark~\ref{rem:adaSG}.
    Consequently,
    \begin{equation}\label{eq:Upp-Bou-xk}
        \|x^k\| \leq \|x^k-x_*^0\| + \|x_*^0-x^0\| + \|x^0\| \le \sqrt{\eta} + \dist(x^0;\X^*) + \|x^0\| < R,
    \end{equation}
    which confirms that $\seq{x^k}\subseteq B(0;R)$.\\
    \ref{thm:Glo-Conv-AdaSGA-2}
    Due to the Assertion $(a)$, $x^k\in \interior \W=B(0;R)$. Furthermore,
    $$\|x_*^0\|\le \|x_*^0-x^0\|+\|x^0\| = \dist(x^0;\X^*) + \|x^0\| < R,$$
    ensuring $x_*^0\in \interior\W$. From $L_\W$-smoothness of function $f$ on $\W$, one has
    $$\left\|x_*^0-\tfrac{1}{L_\W}\big(\nabla f(x_*^0)-\nabla f(x^k)\big)\right\|\le \|x_*^0\| + \tfrac{1}{L_\W}\|\nabla f(x^k)-\nabla f(x_*^0)\| \leq \dist(x^0;\X^*) + \|x^0\| + \|x^k-x_*^0\| < R,$$
    i.e., $x_*^0-\tfrac{1}{L_\W}\big(\nabla f(x_*^0)-\nabla f(x^k)\big) \in \interior\W$. Thus, by virtue of Proposition \ref{pro:HSmoothChar}~\ref{pro:HSmoothChar-3}, it holds that
    \begin{equation*}\label{eq:Ext-Conv-Ineq}
        f(x_*^0)\geq f(x^k)+\innprod{\nabla f(x^k)}{x_*^0-x^k}+\tfrac{1}{2L_\W}\|\nabla f(x_*^0)-\nabla f(x^k)\|^2.
    \end{equation*}
    Applying the above inequality in \eqref{eq:normXk1XsUpBound11100} with $x=x_*^0$ implies that
    \begin{equation}\label{eq:normXk1XsUpBound3}
    \|x^{k+1}-x_*^0\|^2 \leq \|x^k-x_*^0\|^2+2\alpha_k \gamma_k \big(f^*-f(x^k)\big)- \tfrac{\alpha_k \gamma_k}{L_\W}\|\nabla f(x^k)\|^2 + \|x^{k+1}-x^k\|^2.
    \end{equation}
    Substituting \eqref{eq:normXk1XkUpBound111} into \eqref{eq:normXk1XsUpBound3} and utilizing the inequality $\alpha_{k+1}\gamma_{k+1}\leq \alpha_{k}\gamma_{k}\sqrt{2(1-\omega^2)+\tfrac{\theta_k}{\tau}}$ lead to
    \begin{equation*}
        \begin{split}
            \tfrac{\alpha_k \gamma_k}{L_\W}\|\nabla f(x^k)\|^2 \le \|x^k-x_*^0\|^2 &- \|x^{k+1}-x_*^0\|^2 + \tfrac{\omega^2}{1-\omega^2}\big(\|x^k-x^{k-1}\|^2-\|x^{k+1}-x^k\|^2\big)\\
            &+\tfrac{\alpha_k\gamma_k\theta_{k}}{1-\omega^2} \left(f(x^{k-1})-f^*\right)-\tfrac{\alpha_{k+1}\gamma_{k+1}\theta_{k+1}}{1-\omega^2} \left(f(x^{k})-f^*\right).
        \end{split}
    \end{equation*}
    Summing both sides of this inequality from $k=1$ to $k=N$ results in
    \begin{equation}\label{eq:upergrad}
        \begin{array}{ll}
             \displaystyle\sum_{k=1}^N \tfrac{\alpha_k\gamma_k}{L_\W}\|\nabla f(x^k)\|^2 &\le  \|x^1-x_*^0\|^2-\|x^{N+1}-x_*^0\|^2 +\tfrac{\omega^2}{1-\omega^2}\big(\|x^{1}-x^0\|^2-\|x^{N+1}-x^N\|^2\big)\\
             &\hspace{2.1cm}+ \frac{\alpha_{1} \gamma_{1} \theta_{1}}{1-\omega^2}\big(f(x^{0})-f^*\big)-\frac{\alpha_{N+1} \gamma_{N+1} \theta_{N+1}}{1-\omega^2}\big(f(x^N)-f^*\big)\\[2mm]
            &\le \|x^1-x_*^0\|^2+ \tfrac{\omega^2}{1-\omega^2}\|x^{1}-x^0\|^2+\frac{\alpha_{1} \gamma_{1} \theta_{1}}{1-\omega^2}\big(f(x^{0})-f^*\big) \\[2mm]
            &\le \|x^0-x_*^0\|^2 + (1+\tfrac{\omega^2}{1-\omega^2}) \|x^1-x^0\|^2 +(\frac{\alpha_{1} \gamma_{1} \theta_{1}}{1-\omega^2}-2\alpha_0 \gamma_0) (f(x^0)-f(x^*))\le\eta,
        \end{array}
    \end{equation}
    where the third inequality comes from \eqref{eq:Uperx1x*}.
    Now, it is enough to verify that the sequence $\seqone{\alpha_k\gamma_k}$ is bounded from below, i.e.,
    \begin{equation}\label{eq:LowerAlphaGamma}
        \min\left\{\alpha_0\gamma_0, \tfrac{\omega}{L_\W}\right\}\le \alpha_k\gamma_k,\quad\quad k\in\N_0.
    \end{equation}
    Then, \eqref{eq:upergrad} comes to
    \begin{equation}\label{eq:UpperNabla}
        \sum_{k=1}^N \|\nabla f(x^k)\|^2 \le \tfrac{L_\W^2 \eta}{\min\left\{L_\W\alpha_0\gamma_0, \omega\right\}},
    \end{equation}
    justifying $\sum_{k\ge 0} \|\nabla f(x^k)\|^2 <\infty$ and $\displaystyle\lim_{k\to\infty} \|\nabla f(x^k)\|=0$.
    We show inequality \eqref{eq:LowerAlphaGamma} by induction. For the case $k=0$, it is obvious. Assume that \eqref{eq:LowerAlphaGamma} holds for some $k\in \N_0$. We obtain
    $$\alpha_{k+1}\gamma_{k+1} = \min\left\{\alpha_k\gamma_k \sqrt{2(1-\omega^2)+\tfrac{\theta_k}{\tau}}, \tfrac{\omega}{L_{k+1}}\right\}\ge \min\left\{\alpha_k\gamma_k , \tfrac{\omega}{L_{k+1}}\right\} \ge \min\left\{\alpha_0\gamma_0, \tfrac{\omega}{L_\W}\right\}.$$
    \ref{thm:Glo-Conv-AdaSGA-3} Inasmuch as the sequence $\seq{x^k}$ is bounded and $\lim_{k\to\infty}\|\nabla f(x^k)\|= 0$, all cluster points of $\seq{x^k}$ are optimal solutions. The monotonicity of Lyapunov function implies that for any $x\in \X^*$,
    \[
    \|x^{k+1}- x\|^2 + a_{k+1} \leq \|x^{k}- x\|^2 + a_{k},
    \]
    where $a_k:=\tfrac{\omega^2}{1-\omega^2}\|x^k-x^{k-1}\|^2+ \frac{\alpha_k\gamma_k\theta_{k}}{1-\omega^2} \left(f(x^{k-1})-f^*\right)$. This inequality ensures that $x^k\to x^*$ for some $x^*\in \X^*$, due to \cite[Lemma 2]{malitsky2019adaptive}.
    Furthermore, it follows from \eqref{eq:Upp-Bou-xk} that $\|x^*\|\le \sqrt{\eta} + \dist(x^0;\X^*) + \|x^0\| < R,$ completing the proof.\qed
\end{proof}

The next theorem investigates the convergence rate and complexity analysis of AdaSGA to guarantee $\|\nabla f(x^k)\|\leq \varepsilon$ and $f(x^k)-f^*\leq \varepsilon$, for a specified accuracy
parameter $\varepsilon>0$.

\begin{thm}[\textbf{Convergence rate and complexity analysis of AdaSGA}] \label{thm:ConRat-Com-AdaSGA}
    Let $\W=\ov B(0;R)$ where $R>\sqrt{\eta}+\dist(x^0;\X^*)+\|x^0\|$ and the constant $\eta$ is defined in Remark~\ref{rem:adaSG}.
    If the sequence $\seq{x^k}$ is generated by Algorithm~\ref{alg:AdaSGDA}, the following statements hold:
    \begin{enumerate}
        \item\label{thm:ConRat-Com-AdaSGA-1}
            one has
            \[
            \min_{1\le k\le N} \|\nabla f(x^k)\| \le \sqrt{\tfrac{L_\W^2 \eta}{N\min\left\{L_\W\alpha_0\gamma_0, \omega\right\}}},\quad\quad \forall N\in \N.
            \]
            Moreover, for a give accuracy parameter $\varepsilon>0$, the number of iterations to guarantee $\|\nabla f(x_{k})\| \leq \varepsilon$, $\Nnf(\varepsilon)$, satisfies
            \begin{equation}\label{eq:compBoundAdaSGA2}
                \Nnf(\varepsilon)\le \K_0:=\left\lceil 1+\tfrac{L_\W^2 \eta}{\varepsilon^2\min\left\{L_\W\alpha_0\gamma_0, \omega\right\}}\right\rceil;
            \end{equation}
        \item\label{thm:ConRat-Com-AdaSGA-2}
            One has
            \[
            f^*_N-f^* \leq  \tfrac{L_\W\big((R+\|x^0\|)^2+\eta\big)}{2N\min\left\{L_\W\alpha_0\gamma_0, \omega\right\}},\quad\quad \forall N\in \N,
            \]
            where $f^*_N:=\min \{f(x^k): 0\le k\le N\}$.
            Moreover, for a give accuracy parameter $\varepsilon>0$, the number of iterations to guarantee $f(x^k)-f^*\leq \varepsilon$, $\Nf(\varepsilon)$, satisfies
            \begin{equation}\label{eq:compBoundAdaSGA1}
                \Nf(\varepsilon)\le \ov\K_0:=\left\lceil 1+\tfrac{L_\W\big((R+\|x^0\|)^2+\eta\big)}{2\varepsilon\min\left\{L_\W\alpha_0\gamma_0, \omega\right\}}\right\rceil.
            \end{equation}
    \end{enumerate}
\end{thm}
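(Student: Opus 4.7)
My plan for the first assertion is to apply directly the sum bound
\[
\sum_{k=1}^N \|\nabla f(x^k)\|^2 \le \tfrac{L_\W^2\eta}{\min\{L_\W\alpha_0\gamma_0,\omega\}}
\]
already established in inequality \eqref{eq:UpperNabla} within the proof of Theorem \ref{thm:Glo-Conv-AdaSGA}. Combined with the trivial estimate $N\cdot\min_{1\le k\le N}\|\nabla f(x^k)\|^2 \le \sum_{k=1}^N\|\nabla f(x^k)\|^2$ and taking square roots, this yields the claimed convergence rate. For the complexity bound $\Nnf(\varepsilon)\le \K_0$, I would simply substitute $N=\K_0$ into the rate and check that the right-hand side is at most $\varepsilon$.

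For the second assertion, my plan is to produce an averaged bound of the form $\sum_{k=0}^{N-1}\alpha_k\gamma_k(f(x^k)-f^*)\le C$ with a constant $C$ depending on $(R,\|x^0\|,\eta)$, and then convert it via the uniform lower bound $\alpha_k\gamma_k\ge \min\{L_\W\alpha_0\gamma_0,\omega\}/L_\W$ proved in \eqref{eq:LowerAlphaGamma}. Concretely, I first invoke convexity of $f$ at $x_*^0:=\proj_{\X^*}(x^0)$ to write $f(x^k)-f^*\le \innprod{\nabla f(x^k)}{x^k-x_*^0}$, then apply the three-point identity to the update $x^{k+1}=x^k-\alpha_k\gamma_k\nabla f(x^k)$ to get
\[
2\alpha_k\gamma_k(f(x^k)-f^*)\le \|x^k-x_*^0\|^2-\|x^{k+1}-x_*^0\|^2+\|x^{k+1}-x^k\|^2.
\]
Summing from $k=0$ to $k=N-1$ telescopes the distance terms; combined with $\|x^0-x_*^0\|^2\le\eta$ and a uniform bound $\sum_{k=0}^{N-1}\|x^{k+1}-x^k\|^2\le (R+\|x^0\|)^2$, this gives
\[
2\sum_{k=0}^{N-1}\alpha_k\gamma_k(f(x^k)-f^*)\le \eta+(R+\|x^0\|)^2.
\]
Since $f^*_N-f^*\le f(x^k)-f^*$ for every $0\le k\le N$ and $\sum\alpha_k\gamma_k\ge N\min\{L_\W\alpha_0\gamma_0,\omega\}/L_\W$, dividing delivers the claimed bound on $f^*_N-f^*$, and the complexity $\Nf(\varepsilon)\le \ov\K_0$ follows by imposing $f^*_N-f^*\le\varepsilon$ and solving for $N$.

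The main technical obstacle I anticipate is establishing the uniform bound $\sum_{k=0}^{N-1}\|x^{k+1}-x^k\|^2\le (R+\|x^0\|)^2$. While each individual displacement is controlled by the diameter of $\W$, successive displacements do not automatically sum to a quantity independent of $N$. I would attack this by exploiting the displacement estimate \eqref{eq:normXk1XkUpBound111} from the proof of Theorem \ref{thm:Suf-Dec-Lya}, which bounds $\|x^{k+1}-x^k\|^2$ in terms of $\omega^2\|x^k-x^{k-1}\|^2$ and function-value differences, and then combining it with a Lyapunov-type telescoping argument that exploits $\|x^k-x^0\|\le R+\|x^0\|$ on $\W$. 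Once this summation bound is secured, the remaining arithmetic leading to the stated rate and to the ceiling $\ov\K_0$ is routine.
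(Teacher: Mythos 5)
Your treatment of assertion (a) is correct and coincides with the paper's: both rest on the already-established summation bound \eqref{eq:UpperNabla} together with the trivial estimate $N\min_{1\le k\le N}\|\nabla f(x^k)\|^2\le\sum_{k=1}^N\|\nabla f(x^k)\|^2$, and the complexity bound follows by substituting $N=\K_0$.

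For assertion (b), however, there is a genuine gap, and it is exactly the one you flag yourself: the uniform bound $\sum_{k=0}^{N-1}\|x^{k+1}-x^k\|^2\le (R+\|x^0\|)^2$ is asserted but not proved, and the route you sketch for it does not close. The displacement estimate \eqref{eq:normXk1XkUpBound111} rearranges to
\[
\|x^{k+1}-x^k\|^2\le \omega^2\|x^k-x^{k-1}\|^2+\alpha_k\gamma_k\theta_k\big(f(x^{k-1})-f(x^k)\big),
\]
and summing it leaves you with $\sum_k\alpha_k\gamma_k\theta_k\big(f(x^{k-1})-f(x^k)\big)$. AdaSGA is not a descent method, so these differences need not be nonnegative, and the weights $\alpha_k\gamma_k\theta_k=(\alpha_k\gamma_k)^2/(\alpha_{k-1}\gamma_{k-1})$ are neither constant nor monotone; an Abel summation therefore does not yield an $N$-independent bound without further input, and controlling individual displacements by the diameter of $\W$ only gives a bound growing linearly in $N$. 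The paper sidesteps the problem entirely: it never isolates $\sum_k\|x^{k+1}-x^k\|^2$. Instead it sums the Lyapunov inequality \eqref{eq:normXk1XsUpBoundomega} with $x=x^*$ from $k=1$ to $N$, so that the terms $\tfrac{\omega^2}{1-\omega^2}\|x^{k+1}-x^k\|^2$ appear on both sides and telescope away (see \eqref{eq:k1-to-kN-dif-Itr}); the remaining function-value terms are regrouped by Abel summation, and the step-size rule $\alpha_{k+1}\gamma_{k+1}\theta_{k+1}\le 2(1-\omega^2)\alpha_k\gamma_k\big(1+\tfrac{\theta_k}{2\tau(1-\omega^2)}\big)$ with $\tau\ge1$ guarantees that each resulting coefficient of $f(x^k)-f^*$ is nonnegative, so the whole sum is bounded below by $2\big(f^*_N-f^*\big)\sum_{k=1}^N\alpha_k\gamma_k$ (see \eqref{eq:k1-to-kN-dif-fun}); the lower bound \eqref{eq:LowerAlphaGamma} then finishes the argument as you intended. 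If you want to salvage your decomposition, the cleanest fix is to replace your bare three-point identity by \eqref{eq:normXk1XsUpBoundomega}, which is precisely that identity with the troublesome displacement term already absorbed into the Lyapunov structure.
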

\begin{proof}
    \ref{thm:ConRat-Com-AdaSGA-1} The bound for norm of gradient is concluded from inequality \eqref{eq:UpperNabla}. Additionally, the proof of $\Nnf(\varepsilon)\le\K_0$ is similar to one in Theorem \ref{thm:ConvRat-Com-SGA}.\\
    \ref{thm:ConRat-Com-AdaSGA-2} By Theorem \ref{thm:Glo-Conv-AdaSGA}~\ref{thm:Glo-Conv-AdaSGA-3}, $x^k\to x^*$ for some $x^*\in \X^*$. Now, taking the sum from both sides of inequality \eqref{eq:normXk1XsUpBoundomega} with $x=x^*$ from $k=1$ to $k=N$ leads to
    \begin{equation}\label{eq:k1-to-kN-dif-Itr}
    \begin{split}
        &2\sum_{k=1}^{N} \left(\alpha_k\gamma_k\big(1+\tfrac{\theta_k}{2(1-\omega^2)}\big)\big(f(x^k)-f^*\big)-\tfrac{\alpha_{k}\gamma_{k}\theta_{k}}{2(1-\omega^2)}\big(f(x^{k-1})-f^*\big)\right)\\
        \le &\|x^1-x^*\|^2 - \|x^{N+1}-x^*\|^2 +\tfrac{\omega^2}{1-\omega^2}\big(\|x^1-x^0\|^2-\|x^{N+1}-x^N\|^2\big)\\
        \le & \|x^1-x^*\|^2 + \tfrac{\omega^2}{1-\omega^2}\|x^1-x^0\|^2.
    \end{split}
    \end{equation}
    On the other hand, the left side of inequality \eqref{eq:k1-to-kN-dif-Itr} is lower bounded as
    \begin{equation}\label{eq:k1-to-kN-dif-fun}
    \begin{split}
        &2\sum_{k=1}^{N} \left(\alpha_k\gamma_k\big(1+\tfrac{\theta_k}{2(1-\omega^2)}\big)\big(f(x^k)-f^*\big)-\tfrac{\alpha_{k}\gamma_{k}\theta_{k}}{2(1-\omega^2)}\big(f(x^{k-1})-f^*\big)\right)\\
        = &2\sum_{k=1}^{N-1} \bigg(\Big(\alpha_k\gamma_k\big(1+\tfrac{\theta_k}{2(1-\omega^2)}\big)-\tfrac{\alpha_{k+1}\gamma_{k+1}\theta_{k+1}}{2(1-\omega^2)}\Big)\big(f(x^{k})-f^*\big)\bigg)\\
        &\hspace{3cm}+ 2\alpha_{N}\gamma_{N}\big(1+\tfrac{\theta_{N}}{2(1-\omega^2)}\big)\big(f(x^{N})-f^*\big) - \tfrac{\alpha_{1}\gamma_{1}\theta_{1}}{1-\omega^2}\big(f(x^{0})-f^*\big)\\
        \ge &2\min_{1\le k\le N}\left\{f(x^k)-f^*\right\}\sum_{k=1}^{N-1} \Big(\alpha_k\gamma_k\big(1+\tfrac{\theta_k}{2(1-\omega^2)}\big)-\tfrac{\alpha_{k+1}\gamma_{k+1}\theta_{k+1}}{2(1-\omega^2)}\Big)\\
        &\hspace{3cm}+ 2\alpha_{N}\gamma_{N}\big(1+\tfrac{\theta_{N}}{2(1-\omega^2)}\big)\min_{1\le k\le N}\left\{f(x^k)-f^*\right\} - \tfrac{\alpha_{1}\gamma_{1}\theta_{1}}{1-\omega^2}\big(f(x^{0})-f^*\big)\\
        = &2 \big(f^*_N -f^*\big) \sum_{k=1}^{N} (\alpha_k\gamma_k) - \tfrac{\alpha_{1}\gamma_{1}\theta_{1}}{1-\omega^2}\big(f(x^{0})-f^*_N\big).
    \end{split}
    \end{equation}
    Combining \eqref{eq:k1-to-kN-dif-Itr} and \eqref{eq:k1-to-kN-dif-fun} and applying \eqref{eq:normXk1XsUpBound111} with $k=0$ and $x=x^*$, we obtain
    \begin{align*}
        2 \big(f^*_N -f^*\big) \sum_{k=1}^{N} \alpha_k\gamma_k
        &\leq \|x^1-x^*\|^2+\tfrac{\omega^2}{1-\omega^2}\|x^1-x^0\|^2+\tfrac{\alpha_1\gamma_1\theta_1}{1-\omega^2}\big(f(x^0)-f^*_N\big)\\
        &\leq \|x^0-x^*\|^2+ (1+\tfrac{\omega^2}{1-\omega^2})\|x^1-x^0\|^2+(\tfrac{\alpha_1\gamma_1\theta_1}{1-\omega^2} -2\alpha_0 \gamma_0) \big(f(x^0)-f^*\big)\le (R+\|x^0\|)^2+\eta,
    \end{align*}
    leading to
    \[
    f^*_N-f^* \leq \tfrac{\eta}{2\sum_{k=1}^{N} \alpha_k\gamma_k} \le \tfrac{L_\W\big((R+\|x^0\|)^2+\eta\big)}{2N\min\left\{L_\W\alpha_0\gamma_0, \omega\right\}},
    \]
    using \eqref{eq:LowerAlphaGamma}.
    Furthermore, $\Nf(\varepsilon)\le\ov\K_0$ is concluded from the former inequality together with \eqref{eq:compBoundAdaSGA1}. This completes the proof.
    \qed
\end{proof}

Theorem~\ref{thm:ConRat-Com-AdaSGA} establishes the sublinear convergence rate $\mathcal{O}(k^{-1})$ and the corresponding iteration complexity $\mathcal{O}(\varepsilon^{-1})$ for the sequence of function value gap $\seq{f(x^k)-f^*}$. In what follows, we strengthen the assumptions by imposing either local strong convexity or KL property, under which we demonstrate the linear convergence of the method. We first examine the case where the objective function $f$ is locally strongly convex and establish the linear convergence of the AdaSGA under this setting.

\begin{thm}[\textbf{Linear convergence rate of Lyapunov function under local strong convexity}]
\label{thm:linConv-Lya-STR-AdaSGA}
    Let the function $f$ be locally strongly convex. If the sequence $\seq{x^k}$ is generated by Algorithm~\ref{alg:AdaSGDA} with parameter
    $\tau>1$, one has
    \begin{equation*}\label{eq:elHatLinConv}
        \wh\eL(x^{k+1},x^k)\leq \left(1-\wh{q}\right)  \wh\eL(x^k,x^{k-1}),\quad\quad \forall k\in \N,
    \end{equation*}
    for the Lyapunov function $\func{\wh\eL}{\R^n\times\R^n}{\R}$ given by
    \begin{equation}\label{eq:elHat}
        \wh\eL(x^{k+1},x^k):=\|x^{k+1}-x^*\|^2+\left(\tfrac{\omega^2}{1-\omega^2}+\tfrac{\mu_\W}{2\omega L_\W }\right) \|x^{k+1}-x^k\|^2+2\alpha_k \gamma_k \Big(1+\tfrac{\theta_k}{2(1-\omega^2)}\Big) \big(f(x^k)-f^*\big),
    \end{equation}
    where $x^*\in \X^*$, $\W=\overline{B}(0;R)$ with $R$ came from Remark \ref{rem:adaSG}, and
    \begin{equation*}\label{eq:qLinear}
        \wh{q}:=\min\set{\tfrac{\mu_\W}{2}\min\left\{\alpha_0\gamma_0,\tfrac{\omega}{L_\W}\right\}, \tfrac{\mu_\W(1-\omega^2)}{2\omega^3 L_\W+\mu_\W(1-\omega^2)},\tfrac{(\tau-1)\min\left\{\tfrac{\alpha_0\gamma_0\mu_\W}{\omega},\tfrac{\mu_\W}{L_\W}\right\}}{\tau \left(2(1-\omega^2)+\min\left\{\tfrac{\alpha_0\gamma_0\mu_\W}{\omega},\tfrac{\mu_\W}{L_\W}\right\}\right)}}\in (0,1).
    \end{equation*}
\end{thm}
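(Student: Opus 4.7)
The approach is to refine the Lyapunov inequality \eqref{eq:normXk1XsUpBoundomega} into a genuine contraction by exploiting strong convexity, and then to absorb the extra $\tfrac{\mu_\W}{2\omega L_\W}\|x^{k+1}-x^k\|^2$ term present in $\wh\eL$ by reusing the descent estimate on $\|x^{k+1}-x^k\|^2$ obtained in the proof of Theorem~\ref{thm:Suf-Dec-Lya}. By Theorem~\ref{thm:Glo-Conv-AdaSGA}~\ref{thm:Glo-Conv-AdaSGA-1} the iterates stay in $\interior\W$, so Theorem~\ref{thm:localglobal-strong} ensures that $f$ is $\mu_\W$-strongly convex on $\W$, and the uniform lower bound $\alpha_k\gamma_k\geq\min\{\alpha_0\gamma_0,\omega/L_\W\}$ from \eqref{eq:LowerAlphaGamma} will be used throughout.

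The key step is to repeat the derivation of \eqref{eq:normXk1XsUpBoundomega} but using strong convexity at $(x^k,x^*)$ in \eqref{eq:normXk1XsUpBound11100}: this upgrades \eqref{eq:normXk1XsUpBound111} to
\begin{equation*}
\|x^{k+1}-x^*\|^2 \leq (1-\alpha_k\gamma_k\mu_\W)\|x^k-x^*\|^2 - 2\alpha_k\gamma_k(f(x^k)-f^*) + \|x^{k+1}-x^k\|^2,
\end{equation*}
and yields the strong-convexity analog of \eqref{eq:normXk1XsUpBoundomega} with $(1-\alpha_k\gamma_k\mu_\W)\|x^k-x^*\|^2$ in place of $\|x^k-x^*\|^2$. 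Adding $\tfrac{\mu_\W}{2\omega L_\W}\|x^{k+1}-x^k\|^2$ to both sides, with the LHS forming $\wh\eL(x^{k+1},x^k)$ and the RHS bounded using $\|x^{k+1}-x^k\|^2\leq \omega^2\|x^k-x^{k-1}\|^2+\alpha_k\gamma_k\theta_k(f(x^{k-1})-f^*)$, produces an inequality of the form $\wh\eL(x^{k+1},x^k)\leq T_1\|x^k-x^*\|^2+T_2\|x^k-x^{k-1}\|^2+T_3(f(x^{k-1})-f^*)$. The coefficient $T_3$ is then rewritten in terms of $\beta_2^{(k-1)}:=2\alpha_{k-1}\gamma_{k-1}(1+\tfrac{\theta_{k-1}}{2(1-\omega^2)})$ by invoking the step-size rule \eqref{eq:alphak}, which gives $\alpha_k\gamma_k\theta_k\leq\alpha_{k-1}\gamma_{k-1}(2(1-\omega^2)+\theta_{k-1}/\tau)$ and leaves a slack proportional to $(\tau-1)/\tau$.

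Once the inequality is in this form, it suffices to show $T_i\leq (1-\wh q)\beta_i$ for each of the three Lyapunov coefficients $\beta_i$, which produces the three candidates entering $\wh q$. The first candidate, $\tfrac{\mu_\W}{2}\min\{\alpha_0\gamma_0,\omega/L_\W\}$, will come from bounding $1-\alpha_k\gamma_k\mu_\W$ via the lower bound on $\alpha_k\gamma_k$. The middle candidate will arise from the gap between the coefficient $\tfrac{\omega^2}{1-\omega^2}+\tfrac{\mu_\W\omega}{2L_\W}$ produced on the RHS and the coefficient $\tfrac{\omega^2}{1-\omega^2}+\tfrac{\mu_\W}{2\omega L_\W}$ inside $\wh\eL$, which is strictly positive because $\omega\leq 1/\sqrt{2}$ forces $\omega<1/\omega$. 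The third candidate will be extracted from the $(\tau-1)/\tau$-slack combined with the lower bound on $\alpha_k\gamma_k$, after combining the extra $\tfrac{\mu_\W\alpha_k\gamma_k\theta_k}{2\omega L_\W}(f(x^{k-1})-f^*)$ contributed by the added $\|x^{k+1}-x^k\|^2$ term with the already-present $\tfrac{\alpha_k\gamma_k\theta_k}{1-\omega^2}(f(x^{k-1})-f^*)$. Taking the minimum of the three candidates will yield $\wh q\in(0,1)$.

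The main obstacle is this final bookkeeping for $T_3$: the $\tau$-slack must be strong enough to absorb both contributions to the $(f(x^{k-1})-f^*)$-coefficient, which is precisely why the hypothesis is strengthened from $\tau\geq 1$ (sufficient for monotonicity in Theorem~\ref{thm:Suf-Dec-Lya}) to $\tau>1$ in the present theorem; this strict inequality is exactly what makes the contraction factor in the last term strictly positive.
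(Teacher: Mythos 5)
There is a genuine gap in how you introduce the extra $\tfrac{\mu_\W}{2\omega L_\W}\|x^{k+1}-x^k\|^2$ term. You propose to add it artificially to both sides and then bound the right-hand copy by the descent recursion $\|x^{k+1}-x^k\|^2\le\omega^2\|x^k-x^{k-1}\|^2+\alpha_k\gamma_k\theta_k\big(f(x^{k-1})-f(x^k)\big)$. This contaminates the other two coefficients on the right. First, the coefficient of $\|x^k-x^{k-1}\|^2$ becomes $\tfrac{\omega^2}{1-\omega^2}+\tfrac{\mu_\W\omega}{2L_\W}$, and the resulting contraction candidate is $\tfrac{\mu_\W(1-\omega^2)^2}{2\omega^3L_\W+\mu_\W(1-\omega^2)}$, which is smaller by a factor $(1-\omega^2)$ than the stated middle candidate, so the theorem's exact $\wh q$ is not reached. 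Second, and fatally, the coefficient of $f(x^{k-1})-f^*$ is inflated to $\alpha_k\gamma_k\theta_k\big(\tfrac{1}{1-\omega^2}+\tfrac{\mu_\W}{2\omega L_\W}\big)$; after invoking the step-size rule this requires
\begin{equation*}
\Big(1-\tfrac{(\tau-1)\theta_{k-1}}{\tau(2(1-\omega^2)+\theta_{k-1})}\Big)\Big(1+\tfrac{\mu_\W(1-\omega^2)}{2\omega L_\W}\Big)\le 1-\wh q,
\end{equation*}
and the left factor's deficit is $O(\tau-1)$ while the inflation $\tfrac{\mu_\W(1-\omega^2)}{2\omega L_\W}$ is a fixed positive constant. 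For $\tau$ close to $1$ the product exceeds $1$ and no contraction is obtained. So the $\tau$-slack cannot "absorb both contributions" as you claim; it is only large enough to supply strict contraction when the $f(x^{k-1})-f^*$ coefficient is left \emph{unchanged} from the monotonicity analysis.

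The paper avoids this entirely by never adding the quadratic term to the right-hand side. Since the iterates and $x^*$ lie in $\interior\W$, Corollary~\ref{cor:HSmoothChar}~\ref{cor:HSmoothChar-5} gives $f(x^*)\ge f(x^k)+\innprod{\nabla f(x^k)}{x^*-x^k}+\tfrac{1}{2L_\W}\|\nabla f(x^k)\|^2$; combined with $\|x^{k+1}-x^k\|=\alpha_k\gamma_k\|\nabla f(x^k)\|$ and the upper bound $\alpha_k\gamma_k\le\tfrac{\omega}{L_k}\le\tfrac{\omega}{\mu_\W}$ (which your proposal never establishes), this yields $\innprod{\nabla f(x^k)}{x^*-x^k}\le f^*-f(x^k)-\tfrac{\mu_\W}{2\omega L_\W\alpha_k\gamma_k}\|x^{k+1}-x^k\|^2$. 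Averaging this with the strong-convexity bound produces simultaneously the factor $1-\tfrac{\mu_\W\alpha_k\gamma_k}{2}$ on $\|x^k-x^*\|^2$ and the factor $1-\tfrac{\mu_\W}{2\omega L_\W}$ on $\|x^{k+1}-x^k\|^2$, so after applying \eqref{eq:normXk1XkUpBound111} the term $\tfrac{\mu_\W}{2\omega L_\W}\|x^{k+1}-x^k\|^2$ migrates to the left-hand side for free, the coefficients $\tfrac{\omega^2}{1-\omega^2}$ and $\tfrac{\alpha_k\gamma_k\theta_k}{1-\omega^2}$ on the right stay exactly as in Theorem~\ref{thm:Suf-Dec-Lya}, and the three stated candidates for $\wh q$ fall out. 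Your overall architecture (three coefficient comparisons, uniform bounds on $\alpha_k\gamma_k$ and $\theta_k$, role of $\tau>1$) is right, but the mechanism generating the $\|x^{k+1}-x^k\|^2$ term in the Lyapunov function must be the cocoercivity inequality, not an add-and-absorb manipulation.
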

\begin{proof}
    Since $f$ is a locally strongly convex function and $\X^*\neq \emptyset$, it follows from Proposition~\ref{pro:SingleStrong} that $\X^*=\{x^*\}$ is singleton. Moreover, by Theorem \ref{thm:Glo-Conv-AdaSGA}, $\seq{x^k}\subseteq \W$ and $x^k\to x^*$, which ensures that $x^*\in\W$.
    Furthermore, in light of Theorem~\ref{thm:localglobal-strong} and Remark \ref{rem:adaSG}, the function $f$ is a $\mu_\W$-strongly convex and $L_\W$-smooth function on $\W$. 
    Now, we first show $\alpha_k\gamma_k$ is bounded above by $\tfrac{\omega}{\mu_{\W}}$.
    Form $\mu_\W$-strong convexity of $f$, it holds that
    \begin{align}\label{eq:stronginequality}
        \innprod{\nabla f(x^k)}{x^*-x^k}&\leq f(x^*)-f(x^k) -\tfrac{\mu_\W}{2} \|x^k-x^*\|^2,
    \end{align}
    and by Fact \ref{fac:StrongChar}~(c), we obtain
    \begin{align*}
        \|\nabla f(x^k)-\nabla f(x^{k-1})\| \|x^k-x^{k-1}\|\geq \innprod{\nabla f(x^k)-\nabla f(x^{k-1})}{x^k-x^{k-1}}\geq \mu_\W \|x^k-x^{k-1}\|^2,
    \end{align*}
    leading to the following upper bound on $\alpha_k\gamma_k$:
    \begin{equation}\label{eq:upperAlphaGamma}
        \alpha_k\gamma_k\leq \tfrac{\omega}{L_k}=\tfrac{\omega\|x^k-x^{k-1}\|}{\|\nabla f(x^k)-\nabla f(x^{k-1})\|} \leq \tfrac{\omega}{\mu_\W}.
    \end{equation}
    Next, we establish the linear convergence inequality for Lyapunov function $\wh\eL$.
    Let us set $x^0_*:=\dist(x^0;\X^*)$. By Theorem~\ref{thm:Glo-Conv-AdaSGA}, $x^*\in \interior \W$ and $\seq{x^k} \subseteq \interior\W$. Moreover, applying \eqref{eq:Upp-Bou-xk} and \eqref{eq:Uperxkx*0}
    $$\left\|x^*-\tfrac{1}{L_\W}\big(\nabla f(x^*)-\nabla f(x^k)\big)\right\|\le \|x^*\| + \tfrac{1}{L_\W}\|\nabla f(x^k)\| \leq \|x^*\| + \|x^k-x_*^0\| \leq 2\sqrt{\eta} + \dist(x^0;\X^*) + \|x^0\|< R,$$
    i.e., $x^*-\tfrac{1}{L_\W}\big(\nabla f(x^*)-\nabla f(x^k)\big) \in \interior\W$. Hence, by Corollary \ref{cor:HSmoothChar-5}~\ref{cor:HSmoothChar-5}, we get
    \begin{equation}\label{eq:Ext-Conv-Ineq1}
        f(x^*)\geq f(x^k)+\innprod{\nabla f(x^k)}{x^*-x^k}+\tfrac{1}{2L_\W}\|\nabla f(x^k)\|^2.
    \end{equation}
    It follows from \eqref{eq:Ext-Conv-Ineq1} together with \eqref{eq:upperAlphaGamma} that
    \begin{align*}
        \innprod{\nabla f(x^k)}{x^*-x^k}\leq f^*-f(x^k) -\tfrac{1}{2L_\W} \|\nabla f(x^k)\|^2\leq f^*-f(x^k) -\tfrac{\mu_\W}{2\omega L_\W \alpha_k \gamma_k} \|x^{k+1}-x^k\|^2.
    \end{align*}
    Adding inequality \eqref{eq:stronginequality} to the former inequality ensures
    \begin{align*}
        \innprod{\nabla f(x^k)}{x^*-x^k}&\leq f^*-f(x^k)-\tfrac{\mu_\W}{4} \|x^k-x^*\|^2 -\tfrac{\mu_\W}{4\omega L_\W \alpha_k\gamma_k} \|x^{k+1}-x^k\|^2.
    \end{align*}
    By applying this upper bound to equality \eqref{eq:normXk1XsUpBound11100} with $x=x^*$, we obtain
    \begin{equation}\label{eq:xk1xsUpBoundLinConv}
        \|x^{k+1}-x^*\|^2 \leq \left(1-\tfrac{\mu_\W\alpha_k\gamma_k}{2}\right)\|x^k-x^*\|^2-2\alpha_k \gamma_k (f(x^k)-f^*)+\left(1-\tfrac{\mu_\W}{2\omega L_\W}\right)\|x^{k+1}-x^k\|^2,
    \end{equation}
    which combining it with \eqref{eq:normXk1XkUpBound111} yields that
    \begin{equation}\label{eq:xk1xsUpBound}
        \begin{split}
            &\|x^{k+1}-x^*\|^2+\left(\tfrac{\omega^2}{1-\omega^2}+\tfrac{\mu_\W}{2\omega L_\W}\right) \|x^{k+1}-x^k\|^2+2\alpha_k \gamma_k \Big(1+\tfrac{\theta_k}{2(1-\omega^2)}\Big) \big(f(x^k)-f^*\big) \\
            &\leq \left(1-\tfrac{\mu_\W\alpha_k\gamma_k}{2}\right)\|x^k-x^*\|^2+\tfrac{\omega^2}{1-\omega^2}\|x^k-x^{k-1}\|^2+\tfrac{\alpha_k\gamma_k\theta_{k}}{1-\omega^2}\big(f(x^{k-1})-f^*\big)\\
            &\leq \left(1-\tfrac{\mu_\W\alpha_k\gamma_k}{2}\right)\|x^k-x^*\|^2+\tfrac{\omega^2}{1-\omega^2}\|x^k-x^{k-1}\|^2+2\alpha_{k-1}\gamma_{k-1}\left(1+\tfrac{\theta_{k-1}}{2\tau(1-\omega^2)}\right)\big(f(x^{k-1})-f^*\big)\\
            &= c_1\|x^k-x^*\|^2+c_2\|x^k-x^{k-1}\|^2+2\alpha_{k-1}\gamma_{k-1}c_3(f(x^{k-1})-f^*),
        \end{split}
    \end{equation}
    where
    \begin{align*}
        c_1:=1-\tfrac{\mu_\W\alpha_k\gamma_k}{2}, \quad c_2:=\tfrac{\omega^2}{1-\omega^2}, \quad c_3:=1+\tfrac{\theta_{k-1}}{2\tau(1-\omega^2)}.
    \end{align*}
    The terms $c_2$ and $c_3$ can be equivalently expressed as
    \begin{align*}
        c_2&= \left(1-\tfrac{\mu_\W(1-\omega^2)}{2\omega^3 L_\W+\mu_\W(1-\omega^2)}\right)\left(\tfrac{\omega^2}{1-\omega^2}+\tfrac{\mu_\W}{2\omega L_\W}\right),
    \end{align*}
    and
    \begin{align*}
        c_3=\left(1-\tfrac{(\tau-1)\theta_{k-1}}{\tau \big(2(1-\omega^2)+\theta_{k-1}\big)}\right) \Big(1+\tfrac{\theta_{k-1}}{2(1-\omega^2)}\Big).
    \end{align*}
    In light of \eqref{eq:elHat} together with the reformulated expressions of $c_2$ and $c_3$, the inequality \eqref{eq:xk1xsUpBound} can be rewritten as
    \begin{align}\label{eq:Liapanof000}
        \wh \eL(x^{k+1},x^k)\leq \left(1-\wh{q}_k\right) \wh \eL(x^k,x^{k-1}),
    \end{align}
    at which
    $$\wh{q}_k:=\min\set{\tfrac{\mu_\W\alpha_k\gamma_k}{2}, \tfrac{\mu_\W(1-\omega^2)}{2\omega^3 L_\W+\mu_\W(1-\omega^2)},\tfrac{(\tau-1)\theta_{k-1}}{\tau \big(2(1-\omega^2)+\theta_{k-1}\big)}}.$$
    To complete the proof, it suffices to show that $\wh{q}\le \wh{q}_k$ for all $k\in \N$.
    From \eqref{eq:LowerAlphaGamma} and \eqref{eq:upperAlphaGamma}, we obtain the following bounds:
    \begin{equation}\label{eq:BoundAlpaGamaTeta}
        \alpha_k\gamma_k\in \left[\min\left\{\alpha_0\gamma_0,\tfrac{\omega}{L_\W}\right\},\tfrac{\omega}{\mu_\W}\right],\quad\quad\quad\quad\theta_k\geq \min\left\{\tfrac{\alpha_0\gamma_0\mu_\W}{\omega},\tfrac{\mu_\W}{L_\W}\right\}.
    \end{equation}
    Since the function $\phi(\theta)=\tfrac{(\tau-1)\theta}{\tau \big(2(1-\omega^2)+\theta\big)}$ is monotonically nondecreasing on $\R_+$, it follows that
    $$\tfrac{(\tau-1)\theta_{k-1}}{\tau \big(2(1-\omega^2)+\theta_{k-1}\big)} \ge \tfrac{(\tau-1)\min\left\{\tfrac{\alpha_0\gamma_0\mu_\W}{\omega},\tfrac{\mu_\W}{L_\W}\right\}}{\tau \left(2(1-\omega^2)+\min\left\{\tfrac{\alpha_0\gamma_0\mu_\W}{\omega},\tfrac{\mu_\W}{L_\W}\right\}\right)}.$$
    Hence, we conclude that
    \begin{align*}
        0<\wh{q} &= \min\set{\tfrac{\mu_\W}{2}\min\left\{\alpha_0\gamma_0,\tfrac{\omega}{L_\W}\right\}, \tfrac{\mu_\W(1-\omega^2)}{2\omega^3 L_\W+\mu_\W(1-\omega^2)},\tfrac{(\tau-1)\min\left\{\tfrac{\alpha_0\gamma_0\mu_\W}{\omega},\tfrac{\mu_\W}{L_\W}\right\}}{\tau \left(2(1-\omega^2)+\min\left\{\tfrac{\alpha_0\gamma_0\mu_\W}{\omega},\tfrac{\mu_\W}{L_\W}\right\}\right)}}\\
        &\le \min\set{\tfrac{\mu_\W\alpha_k\gamma_k}{2}, \tfrac{\mu_\W(1-\omega^2)}{2\omega^3 L_\W+\mu_\W(1-\omega^2)},\tfrac{(\tau-1)\theta_{k-1}}{\tau \big(2(1-\omega^2)+\theta_{k-1}\big)}}<1,
    \end{align*}
    which implies that the inequality \eqref{eq:Liapanof000} can be uniformly bounded by
    \begin{align*}
        \wh \eL(x^{k+1},x^k)\leq \left(1-\wh{q}\right) \wh \eL(x^k,x^{k-1}),
    \end{align*}
    adjusting the claim.\qed
\end{proof}

The following corollary as a direct consequence of Theorem~\ref{thm:linConv-Lya-STR-AdaSGA} presents the linear convergence rate of AdaSGA under local strong convexity. 

\begin{thm}[\textbf{Linear convergence of AdaSGA under local strong convexity}] \label{thm:Lin-Conv-StrCon-AdaSGA}
    Let the function $f$ be locally strongly convex. If the sequence $\seq{x^k}$ is generated by Algorithm~\ref{alg:AdaSGDA} with parameter
    $\tau>1$, the sequences $\seq{\|x^{k}-x^*\|}$, $\seq{f(x^k)-f^*}$, and $\|\nabla f(x^k)\|$ converge R-linearly $0$, i.e.,
    \begin{equation}\label{eq:Lin-xk-St1}
        \|x^{k+1}-x^*\|\leq \tilde{c}_1\left(\sqrt{1-\wh{q}}\right)^k,\quad\quad \forall k\in \N_0,
    \end{equation}
    \begin{equation}\label{eq:Lin-fk-St1}
        f(x^k)-f^*\leq \tilde{c}_2 (1-\wh{q})^k,\quad\quad \forall k\in \N_0,
    \end{equation}
    \begin{equation}\label{eq:Lin-gradk-St1}
        \|\nabla f(x^k)\|\leq \tilde{c}_3 \left(\sqrt{1-\wh{q}}\right)^k,\quad\quad \forall k\in \N_0,
    \end{equation}
    where $x^*\in \X^*$,
    \[
    \tilde{c}_1:= \sqrt{(R+\|x^0\|)^2+\eta},\quad\quad
    \tilde{c}_2:= \tfrac{\omega L^2_\W \tilde{c}_1^2}{\mu_\W \min\{\alpha_0^2\gamma_0^2 L_\W^2, \omega^2\}},\quad\quad
    \tilde{c}_3:= \tfrac{\sqrt{2\tilde{c}_1^2(1-\omega^2)\omega L_{\W}^3}}{\sqrt{2\omega^2L_{\W}+(1-\omega^2)\mu_{\W}}\min\{\alpha_0\gamma_0L_\W , \omega\}},
    \]
    and the parameters $\wh{q}\in (0,1)$ and $\eta, R$ are respectively given in Theorem~\ref{thm:linConv-Lya-STR-AdaSGA} and Remark~\ref{rem:adaSG}.
\end{thm}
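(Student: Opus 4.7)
The plan is to unroll the single-step Lyapunov contraction from Theorem~\ref{thm:linConv-Lya-STR-AdaSGA}, namely $\wh\eL(x^{k+1},x^{k})\le(1-\wh q)\,\wh\eL(x^{k},x^{k-1})$, to produce
\[
\wh\eL(x^{k+1},x^{k})\le(1-\wh q)^{k}\,\wh\eL(x^{1},x^{0}),\qquad \forall k\in\N_{0},
\]
by induction, and then to read each of the three stated R-linear rates off an individual nonnegative summand of $\wh\eL$. Throughout, I rely on the results already established in the analysis of AdaSGA: $\seq{x^{k}}\subseteq \interior\W$ and $x^{k}\to x^{*}$ (Theorem~\ref{thm:Glo-Conv-AdaSGA}); the two-sided step-size bound $\min\{\alpha_{0}\gamma_{0},\omega/L_\W\}\le\alpha_{k}\gamma_{k}\le\omega/\mu_\W$ from \eqref{eq:LowerAlphaGamma} and \eqref{eq:upperAlphaGamma}; and the update identity $x^{k+1}-x^{k}=-\alpha_{k}\gamma_{k}\nabla f(x^{k})$.

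The first preparatory step is the uniform initial bound $\wh\eL(x^{1},x^{0})\le\tilde c_{1}^{2}$. I would obtain this by repeating the computation in the proof of Theorem~\ref{thm:ConRat-Com-AdaSGA}~\ref{thm:ConRat-Com-AdaSGA-2}: apply \eqref{eq:normXk1XsUpBound111} at $k=0$ with $x=x^{*}$ to eliminate $\|x^{1}-x^{*}\|^{2}$, substitute into the definition \eqref{eq:elHat} of $\wh\eL(x^{1},x^{0})$, use $\|x^{1}-x^{0}\|=\alpha_{0}\gamma_{0}\|\nabla f(x^{0})\|$, and absorb the additional summand $\tfrac{\mu_\W}{2\omega L_\W}\|x^{1}-x^{0}\|^{2}$ (not present in Theorem~\ref{thm:ConRat-Com-AdaSGA}) using the inequality $\mu_\W\le L_\W$. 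Grouping terms yields $\wh\eL(x^{1},x^{0})\le(R+\|x^{0}\|)^{2}+\eta=\tilde c_{1}^{2}$, with $R$ and $\eta$ as in Remark~\ref{rem:adaSG}.

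The second step is the extraction of the three rates. For \eqref{eq:Lin-xk-St1}, the first summand of $\wh\eL$ alone gives $\|x^{k+1}-x^{*}\|^{2}\le\wh\eL(x^{k+1},x^{k})\le(1-\wh q)^{k}\tilde c_{1}^{2}$, and a square root finishes it. For \eqref{eq:Lin-fk-St1}, I isolate the third summand, use $1+\tfrac{\theta_{k}}{2(1-\omega^{2})}\ge 1$ together with the lower bound on $\alpha_{k}\gamma_{k}$ to get $f(x^{k})-f^{*}\le\wh\eL(x^{k+1},x^{k})/(2\min\{\alpha_{0}\gamma_{0},\omega/L_\W\})$; the factor $\omega/\mu_\W$ appearing in $\tilde c_{2}$ comes from also invoking the upper bound $\alpha_{k}\gamma_{k}\le\omega/\mu_\W$ in the right place when rewriting the constant. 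For \eqref{eq:Lin-gradk-St1}, I use the update identity to rewrite the middle summand as $\bigl(\tfrac{\omega^{2}}{1-\omega^{2}}+\tfrac{\mu_\W}{2\omega L_\W}\bigr)(\alpha_{k}\gamma_{k})^{2}\|\nabla f(x^{k})\|^{2}$, then reuse the lower bound on $\alpha_{k}\gamma_{k}$ and place $\tfrac{\omega^{2}}{1-\omega^{2}}+\tfrac{\mu_\W}{2\omega L_\W}$ over the common denominator $2\omega L_\W(1-\omega^{2})$ to isolate $\tilde c_{3}$ after a square root.

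The main obstacle is not analytic but purely bookkeeping: verifying the initial Lyapunov estimate $\wh\eL(x^{1},x^{0})\le\tilde c_{1}^{2}$ by adapting the argument of Theorem~\ref{thm:ConRat-Com-AdaSGA} to the slightly heavier coefficients in \eqref{eq:elHat}, and then propagating the constants cleanly through the three extractions to arrive at exactly the stated $\tilde c_{1},\tilde c_{2},\tilde c_{3}$. Once the Lyapunov contraction is in hand, all three R-linear rates follow by dropping the other (nonnegative) summands of $\wh\eL$ and, where appropriate, taking a square root.
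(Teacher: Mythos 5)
Your overall strategy coincides with the paper's: unroll the contraction from Theorem~\ref{thm:linConv-Lya-STR-AdaSGA} to get $\wh\eL(x^{k+1},x^k)\le(1-\wh q)^k\,\wh\eL(x^1,x^0)$, bound $\wh\eL(x^1,x^0)$ by $\tilde c_1^2=(R+\|x^0\|)^2+\eta$, and read each rate off one nonnegative summand of $\wh\eL$ using the step-size bounds \eqref{eq:LowerAlphaGamma} and \eqref{eq:upperAlphaGamma} together with $x^{k+1}-x^k=-\alpha_k\gamma_k\nabla f(x^k)$ and $2\alpha_k\gamma_k\bigl(1+\tfrac{\theta_k}{2(1-\omega^2)}\bigr)\ge\alpha_{k+1}\gamma_{k+1}\theta_{k+1}$. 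The three extractions you describe are exactly the paper's, and your constant for \eqref{eq:Lin-fk-St1} is in fact slightly sharper than $\tilde c_2$, so it implies the stated bound.

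The one step that does not go through as written is the initial estimate $\wh\eL(x^1,x^0)\le\tilde c_1^2$. You start from the plain convexity inequality \eqref{eq:normXk1XsUpBound111} at $k=0$, which leaves the coefficient $1+\tfrac{\omega^2}{1-\omega^2}+\tfrac{\mu_\W}{2\omega L_\W}$ in front of $\|x^1-x^0\|^2$, and you propose to absorb the extra term $\tfrac{\mu_\W}{2\omega L_\W}\|x^1-x^0\|^2$ \lq\lq using $\mu_\W\le L_\W$''. That only yields $\tfrac{\mu_\W}{2\omega L_\W}\le\tfrac{1}{2\omega}$, which is unbounded as $\omega\to0$ and need not fit inside $\eta$ (whose gradient term carries the fixed coefficient $2\alpha_0^2\gamma_0^2\|\nabla f(x^0)\|^2$) nor inside the slack $(R+\|x^0\|)^2$; e.g.\ for $\omega<\tfrac{\mu_\W}{2L_\W}$ the coefficient already exceeds $1$. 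The paper avoids this entirely by invoking the strongly convex one-step inequality \eqref{eq:xk1xsUpBoundLinConv} at $k=0$, whose coefficient of $\|x^1-x^0\|^2$ is $1-\tfrac{\mu_\W}{2\omega L_\W}$, so the problematic term cancels identically and only $\bigl(1+\tfrac{\omega^2}{1-\omega^2}\bigr)\|x^1-x^0\|^2\le2\alpha_0^2\gamma_0^2\|\nabla f(x^0)\|^2$ survives (using $\omega\le\tfrac{1}{\sqrt2}$). Replace your absorption argument by this substitution and the rest of your proof is sound.
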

\begin{proof}
    Based on Theorem~\ref{thm:linConv-Lya-STR-AdaSGA}, one has
    \begin{equation}\label{eq:Lin-LWhat}
        \wh\eL(x^{k+1},x^k)\leq \left(1-\wh{q}\right)  \wh\eL(x^k,x^{k-1})\le \ldots \le \left(1-\wh{q}\right)^k  \wh\eL(x^1,x^{0}).
    \end{equation}
    From \eqref{eq:xk1xsUpBoundLinConv}, it holds that
    \[
    \|x^{1}-x^*\|^2 \leq \left(1-\tfrac{\mu_\W\alpha_0\gamma_0}{2}\right)\|x^0-x^*\|^2-2\alpha_0 \gamma_0 (f(x^0)-f^*)+\left(1-\tfrac{\mu_\W}{2\omega L_\W}\right)\|x^{1}-x^0\|^2.
    \]
    Using this inequality and the assumption $0<\omega\le \tfrac{1}{\sqrt{2}}$, we deduce
    \begin{align*}
        \wh\eL(x^{1},x^0) &=\|x^1-x^*\|^2+\left(\tfrac{\omega^2}{1-\omega^2}+\tfrac{\mu_\W}{2\omega L_\W }\right) \|x^{1}-x^0\|^2+2\alpha_0 \gamma_0 \Big(1+\tfrac{\theta_0}{2(1-\omega^2)}\Big) \big(f(x^0)-f^*\big)\\
        &\le \left(1-\tfrac{\mu_\W\alpha_0\gamma_0}{2}\right)\|x^0-x^*\|^2+\left(1+\tfrac{\omega^2}{1-\omega^2}\right)\|x^{1}-x^0\|^2+\tfrac{2\alpha_0 \gamma_0\theta_0}{2(1-\omega^2)} (f(x^0)-f^*)\le (R+\|x^0\|)^2+\eta,
    \end{align*}
    which simplifies \eqref{eq:Lin-LWhat} to $\wh\eL(x^{k+1},x^k)\leq \tilde{c}_1^2 \left(1-\wh{q}\right)^k$.
    Applying this bound and in light of definition of $\wh\eL(x^{k+1},x^k)$ in \eqref{eq:elHat}, we obtain:
    \begin{equation}\label{eq:Lin-xk-St2}
    \|x^{k+1}-x^*\|^2\le \tilde{c}_1^2 \left(1-\wh{q}\right)^k,
    \end{equation}
    \begin{equation}\label{eq:Lin-fk-St2}
    2\alpha_k \gamma_k \Big(1+\tfrac{\theta_k}{2(1-\omega^2)}\Big) \big(f(x^k)-f^*\big) \le \tilde{c}_1^2 \left(1-\wh{q}\right)^k, 
    \end{equation}
    \begin{equation}\label{eq:Lin-gradk-St2}
    \left(\tfrac{\omega^2}{1-\omega^2}+\tfrac{\mu_\W}{2\omega L_\W }\right)\alpha_k^2\gamma_k^2 \|\nabla f(x^k)\|^2 \le \tilde{c}_1^2 \left(1-\wh{q}\right)^k.
    \end{equation}
    Inequality \eqref{eq:Lin-xk-St1} follows directly from \eqref{eq:Lin-xk-St2}. Furthermore, On account of \eqref{eq:alphak}, \eqref{eq:thetak}, and the lower bounds from \eqref{eq:BoundAlpaGamaTeta}, it is concluded that
    \[
    2\alpha_k \gamma_k \Big(1+\tfrac{\theta_k}{2(1-\omega^2)}\Big) \ge \alpha_{k+1}\gamma_{k+1}\theta_{k+1} \ge \tfrac{\mu_\W \min\{\alpha_0^2\gamma_0^2 L_\W^2, \omega^2\}}{\omega L^2_\W},
    \]
    \[
    \alpha_k^2\gamma_k^2 \ge\min\left\{\alpha_0^2\gamma_0^2,\tfrac{\omega^2}{L_\W^2}\right\}. 
    \]
    Substituting these bounds into \eqref{eq:Lin-fk-St2} and \eqref{eq:Lin-gradk-St2}, respectively, yields the desired results \eqref{eq:Lin-fk-St1} and \eqref{eq:Lin-gradk-St1}, completing the proof.\qed
\end{proof}

We next derive the complexity AdaSGA under local strong convexity.

\begin{thm}[\textbf{Complexity analysis under local strong convexity}]
    Let the function $f$ be locally strongly convex. If the sequence $\seq{x^k}$ is generated by Algorithm~\ref{alg:AdaSGDA} with parameter
    $\tau>1$, for the accuracy parameter $\varepsilon>0$, there exists $x^*\in \X^*$ such that the following assertions hold:
    \begin{enumerate}
        \item
            The number of iterations to guarantee $\|x^k-x^*\|\leq \varepsilon$, $\Nf(\varepsilon)$, satisfies where
            \[\Nx(\varepsilon)\le \MN\big(\tilde{c}_1,\sqrt{1-\wh{q}}\big);\]
        \item
            The number of iterations to guarantee $f(x^k)-f^*\leq \varepsilon$, $\Nf(\varepsilon)$, satisfies
            \[\Nf(\varepsilon)\le \MN\big(\tilde{c}_2,1-\wh{q}\big);\]
        \item
            The number of iterations to guarantee $\|\nabla f(x^k)\|\leq \varepsilon$, $\Nf(\varepsilon)$, satisfies
            \[\Nnf(\varepsilon)\le \MN\big(\tilde{c}_3,\sqrt{1-\wh{q}}\big).\]
    \end{enumerate}
    The parameters $\wh{q}\in (0,1)$  is defined in Theorem~\ref{thm:linConv-Lya-STR-AdaSGA} and $\tilde{c}_1,\tilde{c}_2, \tilde{c}_3$ come from Theorem~\ref{thm:Lin-Conv-StrCon-AdaSGA}.
\end{thm}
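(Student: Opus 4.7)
The plan is to directly leverage the linear-convergence inequalities \eqref{eq:Lin-xk-St1}, \eqref{eq:Lin-fk-St1}, and \eqref{eq:Lin-gradk-St1} established in Theorem~\ref{thm:Lin-Conv-StrCon-AdaSGA} and convert each of them into an iteration-complexity bound via a straightforward logarithmic inversion, in direct analogy with Theorem~\ref{cor:ComAnaSGA-LSC}. Each of the three assertions corresponds to one of the three geometric-decay estimates, so the structural template of the argument is identical in all three cases.

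For Assertion~(a), I would set $K_1 := \MN(\tilde{c}_1, \sqrt{1-\wh{q}})$ and verify that $\|x^{K_1}-x^*\| \le \varepsilon$. From \eqref{eq:Lin-xk-St1}, this reduces to checking the inequality $\tilde{c}_1 (\sqrt{1-\wh{q}})^{K_1} \le \varepsilon$, or, after taking logarithms, $K_1 \log\big((\sqrt{1-\wh{q}})^{-1}\big) \ge \log(\varepsilon^{-1}) + \log(\tilde{c}_1)$. Since Theorem~\ref{thm:linConv-Lya-STR-AdaSGA} guarantees $\wh{q}\in (0,1)$, the denominator $\log\big((\sqrt{1-\wh{q}})^{-1}\big)$ is strictly positive, and the very definition of $\MN$ delivers exactly this inequality, with the trailing $+1$ inside the ceiling absorbing any rounding slack. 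This gives $\Nx(\varepsilon) \le K_1$.

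For Assertions~(b) and~(c), I would apply the same template with $K_2 := \MN(\tilde{c}_2, 1-\wh{q})$ and $K_3 := \MN(\tilde{c}_3, \sqrt{1-\wh{q}})$, substituting \eqref{eq:Lin-fk-St1} and \eqref{eq:Lin-gradk-St1}, respectively. The contraction base is $1-\wh{q}$ for the function-value gap (which decays as $(1-\wh{q})^{k}$) and $\sqrt{1-\wh{q}}$ for the gradient norm (which decays as $(1-\wh{q})^{k/2}$). The rest of the logarithmic computation is identical to Assertion~(a), yielding $\Nf(\varepsilon) \le K_2$ and $\Nnf(\varepsilon) \le K_3$.

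Since each case reduces to a one-line logarithmic manipulation of an already-proven geometric decay, I do not anticipate any substantive obstacle. The only bookkeeping worth pointing out is to pair the exponent structure ($k$ versus $k/2$) with the correct second argument of $\MN$, which the statement already arranges by alternating between $1-\wh{q}$ and $\sqrt{1-\wh{q}}$; all the positivity and boundedness of $\tilde{c}_1, \tilde{c}_2, \tilde{c}_3$ and $\wh{q}\in (0,1)$ needed to invoke the logarithm are already in place.
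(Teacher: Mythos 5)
Your proposal is correct and follows essentially the same route as the paper, which simply invokes Theorem~\ref{thm:Lin-Conv-StrCon-AdaSGA} and performs the logarithmic inversion exactly as in Theorem~\ref{cor:ComAnaSGA-LSC}. The only bookkeeping detail worth noting is that \eqref{eq:Lin-xk-St1} bounds $\|x^{k+1}-x^*\|$ by $(\sqrt{1-\wh{q}})^{k}$ (an index shift of one), which the trailing $+1$ in the definition of $\MN$ absorbs, just as you observed.
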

\begin{proof}
    Employing Theorem~\ref{thm:Lin-Conv-StrCon-AdaSGA}, the proof is straightforward.\qed
\end{proof}

\section{Conclusion} \label{sec:conclusion}

In this paper, we investigated the unconstrained minimization of smooth convex functions with locally H\"{o}lder continuous gradients. After establishing several fundamental properties and characterizations of local H\"{o}lder smoothness, we analyzed the Scaled Gradient Algorithm (SGA) and proved its global convergence and complexity results. Under local strong convexity and the KL inequality, we derived linear convergence rates and showed that SGA achieves linear convergence for any KL exponent when the gradient is locally Lipschitz. Furthermore, we proposed an adaptive variant, AdaSGA, and demonstrated its global convergence and local linear rate using a Lyapunov-based approach.

Future work includes extending these results to composite or constrained settings and exploring stochastic and distributed variants of SGA.

\addcontentsline{toc}{section}{References}
\bibliographystyle{spmpsci}      
\bibliography{Bibliography}

\end{document}